\documentclass[11pt]{article}

\usepackage[english]{babel}
\usepackage{xcolor}
\usepackage{authblk}
\usepackage[T1]{fontenc}
\usepackage[utf8]{inputenc}
\usepackage{lmodern}
\usepackage{microtype}
\usepackage[a4paper,margin=27mm]{geometry}
\usepackage{amsmath,amssymb,amsthm,mathtools}
\usepackage{enumitem}
\usepackage{booktabs}
\usepackage{array}
\usepackage{xcolor}
\usepackage{hyperref}

\usepackage{hyphenat}
\hypersetup{pdftitle={Matrix Szeg\H{o} function and Matrix Orthogonal Polynomials for Multiple Cuts},
  pdfauthor={}
}

\setlist{itemsep=0.28em,topsep=0.45em}
\allowdisplaybreaks
\numberwithin{equation}{section}

\newtheorem{theorem}{Theorem}[section]
\newtheorem{proposition}[theorem]{Proposition}
\newtheorem{lemma}[theorem]{Lemma}
\newtheorem{corollary}[theorem]{Corollary}
\newtheorem{definition}[theorem]{Definition}

\newtheorem{remark}[theorem]{Remark}
\newtheorem{ass}[theorem]{Assumption}
\newtheorem{rhp}[theorem]{RH Problem}
\newcommand{\Ur}{\mathrm{U}(r)}
\newcommand{\C}{\mathbb C}
\newcommand{\R}{\mathbb R}
\newcommand{\D}{\mathbb D}
\newcommand{\T}{\mathbb T}
\newcommand{\Chat}{\widehat{\mathbb C}}
\newcommand{\Id}{I}
\newcommand{\cM}{\mathcal M}

\newcommand{\ord}{\operatorname{ord}}

\newcommand{\diag}{\operatorname{diag}}

\newcommand{\ev}{\operatorname{ev}}
\newcommand{\im}{\operatorname{Im}}
\newcommand{\Div}{\operatorname{div}}

\newcommand{\sgn}{\operatorname{sgn}}
\DeclareMathOperator{\degree}{deg}
\newcommand{\cc}{\mathsf{c}}
\renewcommand{\ss}{\mathsf{s}}

\newcommand{\Kx}{K_X}
\newcommand{\Ek}{E_K}
\newcommand{\Ch}{\widehat{\mathbb{C}}}

\title{
Matrix Szeg\H{o} Function and Matrix Orthogonal Polynomials for Multiple Cuts}
\author{Sampad Lahiry}
\affil{
Department of Mathematics, KU Leuven, Celestijnenlaan
200 B bus 2400, 3001 Leuven,  Belgium}
\affil{School of Mathematics and Statistics, The University of Melbourne, Victoria 3010, Australia}
\affil{ sampad.lahiry@kuleuven.be}
\date{}

\begin{document}
\maketitle
\begin{abstract}
    Following recent developments on matrix-valued orthogonal polynomials (MVOPs), we construct the matrix Szeg\H{o}  factorisation of the matrix weight function when it is supported on multiple intervals. We find that on the gaps the matrix function has unitary multiplicative jumps.

In the next part, performing the Deift-Zhou steepest descent analysis we study the associated global parametrix which comes from the Riemann-Hilbert problem of the MVOPs.  A solution is constructed visualising the rows of the parametrix as sections of vector bundles on a hyper-elliptic Riemann surface. An alternate view point to the global parametrix is also presented using a vanishing lemma. As an application we obtain strong asymptotics of MVOPs for multiple cuts which extends the works of Dea\~no, Kuijlaars, and Rom\'an.
\end{abstract}
\tableofcontents
\section{Introduction and statement of results}
\subsection{Result of Wiener-Masani and matrix Szeg\H{o} factorisations}
Let us fix $2g+2$ real numbers
\[
 a_0<b_0<a_1<b_1<\cdots<a_g<b_g,
\]

set \begin{equation}
    \mathcal{E}=\{a_0,b_0,\dots ,a_g,b_g\}
\end{equation}
and consider
\begin{equation}\label{eq:E-definition}
 E=\bigcup_{\ell=0}^{g}[a_\ell,b_\ell].
\end{equation}
The closed intervals \([a_\ell,b_\ell]\) will be called the
\emph{cuts}.  The \(g\) open intervals
\begin{equation}\label{eq:gaps-definition}
 \Sigma_j=(b_{j-1},a_j),\qquad j=1,\ldots,g,
\end{equation}
will be called the \emph{gaps}.
We further define the two open domains,
\begin{equation}
\begin{aligned}
 \Omega&=\Chat\setminus E,\\                            
 \Omega_0&=\Chat\setminus[a_0,b_g].
\end{aligned}
\end{equation}
It is worth noting and will be used throughout that $\Omega_0$ is simply-connected. Let
\[
 M:E\longrightarrow\C^{r\times r}
\]
be measurable and Hermitian positive
definite for every \(x\). 

We say $D: \Omega \longrightarrow \C^{r\times r}$ is the matrix Szeg\H{o} function if we have 

\begin{equation}\label{eq:goal-band}
 M(x)=D_\pm(x)D_\pm(x)^*\quad x\in E.
\end{equation}

Existence of such a factorisation is non-trivial. We will later see that such a $D$ can be multivalued. When $E$ is a single cut, i.e. $\Omega=\Omega_0$, then a factorisation is obtained using the Jukowski transform which conformally maps the slit complex plane to the unit disc, and relying on the factorisation done on the unit disc by Wiener and Masani \cite{WM}. We will need the result of Wiener-Masani so let us precisely state it.

\begin{theorem}[Wiener--Masani \cite{WM,EJL}]\label{thm:WM}
Let \(W:\T\to\C^{r\times r}\) be measurable and Hermitian positive
definite almost everywhere. Let $m$ denote Lebesgue measure on $\mathbb T$. Suppose
\[
 W_{jk}\in L^1(m),
 \qquad
 \log\det W\in L^1(m).
\]
Then there exists \(G_0\in H^2(\D)^{r\times r}\) such that
\begin{equation}\label{eq:WM-factor}
 W(\tau)=G_0(\tau)G_0(\tau)^*
 \quad\text{for almost every }\tau\in\T,
\end{equation}
and \(\det G_0\) is outer.  Any two such outer factors differ by
right multiplication by a constant unitary matrix.
\end{theorem}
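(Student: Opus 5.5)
The plan is to follow the classical Wiener--Masani route through prediction theory and Wold decomposition, organized around the Hilbert space $L^2(W)$. This is the space of row-vector functions $f:\T\to\C^{1\times r}$ with finite norm $\|f\|_W^2=\int_\T f W f^*\,dm$. Since $W_{jk}\in L^1(m)$, vector trigonometric polynomials lie in $L^2(W)$. Consider the closed subspace $\mathcal H_+$ spanned by $\{c\,\tau^k: c\in\C^{1\times r},\,k\ge 0\}$, and its shifts $\tau^{n}\mathcal H_+$. Let $P$ denote the orthogonal projection onto $\tau\mathcal H_+$.

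\textbf{Step 1 (prediction error).} For each constant row $c$, the one-step prediction error is $c-P c$, and $\|c-Pc\|_W^2=c\,\Delta\,c^*$ for a Hermitian matrix $\Delta\ge0$. The key input is the matrix Szeg\H{o}--Kolmogorov formula
\begin{equation*}
\det\Delta=\exp\int_\T\log\det W\,dm .
\end{equation*}
The hypothesis $\log\det W\in L^1(m)$ therefore gives $\Delta>0$, so the process is of full rank. I expect this step to be the main obstacle. I would prove the inequality $\det\Delta\ge\exp\int\log\det W$ by comparing with the scalar Szeg\H{o} theorem applied to $\det$ of Gram matrices: the Toeplitz block determinants $\det T_n(W)$ satisfy $\det T_{n+1}/\det T_n\to\det\Delta$. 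Then I would use the Szeg\H{o}--Verblunsky limit for block Toeplitz matrices, reducing via $\log\det W\le \operatorname{tr}\log$-type arguments and the scalar result for $\det W$ together with the Hadamard--Fischer inequality.

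\textbf{Step 2 (Wold decomposition and the factor).} With $\Delta>0$, the innovations $e_j=(\epsilon_j-P\epsilon_j)\Delta^{-1/2}$, where $\epsilon_j$ are the standard basis rows, form an orthonormal system. The shifts $\tau^n e_j$, $n\ge0$, are mutually orthogonal. Wold's decomposition gives $\mathcal H_+=\bigoplus_{n\ge0}\tau^n\mathrm{span}\{e_j\}\oplus\bigcap_n\tau^n\mathcal H_+$. Full rank combined with the log-integrability forces the remote past $\bigcap_n\tau^n\mathcal H_+$ to vanish; again this uses the determinant formula, since a nonzero remote part would make $\det W$ vanish on a set of positive measure or fail log-integrability. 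Expand each $\epsilon_j$ in the innovation basis as $\epsilon_j=\sum_{n\ge0}\tau^n\sum_k c^{(n)}_{jk}e_k$, and set $G_0(z)=\sum_n C_n z^n$ with $C_n=(c^{(n)}_{jk})$. Orthonormality gives $\sum_n\|C_n\|_{HS}^2=\sum_j\|\epsilon_j\|_W^2=\int\operatorname{tr}W\,dm<\infty$, so $G_0\in H^2(\D)^{r\times r}$. The isometry of the map $f\mapsto fG_0$ from $L^2(m)$-coefficients onto $L^2(W)$ then yields $W=G_0G_0^*$ a.e., because both sides induce the same Gram matrices $\int\tau^{k}W\,dm$.

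\textbf{Step 3 (outerness and uniqueness).} By construction $G_0(0)=C_0=\Delta^{1/2}$. Jensen's formula gives $\log|\det G_0(0)|\le\int\log|\det G_0|\,dm=\tfrac12\int\log\det W\,dm$, and Step 1 shows that equality holds. Equality in Jensen is exactly the statement that $\det G_0$ is outer. For uniqueness, suppose $G_1$ is another such factor with outer determinant. Then $U=G_0^{-1}G_1$ has boundary values satisfying $UU^*=I$ a.e. Moreover $U$ is of Smirnov class, since $\det G_0$ is outer and the adjugate entries are in $H^1$. Both $U$ and $U^{-1}=U^*$ are then bounded analytic functions in $\D$ whose adjoint is also analytic, so each entry is constant and $U$ is a constant unitary matrix.
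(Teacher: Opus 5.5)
The paper does not prove this statement. It is quoted from the cited references and used as a black box in the proof of Theorem~\ref{thm:thm:WM}, so there is no paper proof to compare against. Your outline is the classical prediction-theoretic proof (Wold decomposition plus the Szeg\H{o}--Kolmogorov determinant identity), and its architecture is sound. However, two load-bearing steps are justified incorrectly as written.

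\textbf{Step 1.} You only need the inequality $\det\Delta\ge\exp\int\log\det W\,dm$, since Jensen in Step 3 supplies the reverse one. The route you sketch does not produce it:
\begin{itemize}
\item The block Szeg\H{o} limit $\frac1n\log\det T_n(W)\to\int\log\det W\,dm$ is, via $\det T_{n+1}/\det T_n=\det\Delta_n\to\det\Delta$, just a restatement of the identity you want. It cannot serve as the input.
\item Fischer's inequality gives $\det T_{n+1}\le\det T_n\,\det\int W\,dm$. That is an \emph{upper} bound on $\det\Delta_n$, the wrong direction.
\item $\log\det W=\operatorname{tr}\log W$ is an identity, not an inequality.
\end{itemize}
The direct argument is short. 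For a matrix polynomial $Q$ with $Q(0)=0$, put $A=(I-Q)W(I-Q)^*$ and $B=\int A\,dm>0$. Applying $\log\lambda\le\lambda-1$ to the eigenvalues of $B^{-1/2}AB^{-1/2}$ gives $\log\det A-\log\det B\le\operatorname{tr}(B^{-1/2}AB^{-1/2})-r$. Integrating yields $\log\det B\ge\int\log|\det(I-Q)|^2\,dm+\int\log\det W\,dm\ge\int\log\det W\,dm$. The last step holds because $\log|\det(I-Q)|$ is subharmonic and $\det(I-Q(0))=1$. Taking for $Q$ the finite-past predictors gives $B=\Delta_n\downarrow\Delta$.

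\textbf{Step 2.} The reason you give for $\mathcal R:=\bigcap_n\tau^n\mathcal H_+=0$ is backwards. A nonzero remote component adds a positive semidefinite summand to $W$; it neither makes $\det W$ vanish nor spoils log-integrability. What actually kills $\mathcal R$ is absolute continuity of $W\,dm$ together with a determinant squeeze. That squeeze needs the Jensen step of Step 3 \emph{before} you know $W=G_0G_0^*$.

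Here is the repair. Let $\mathcal L=\operatorname{span}\{e_j\}$, and build $G_0=\sum_nC_nz^n$ from the projections $u_j$ of $\epsilon_j$ onto $\bigoplus_{n\ge0}\tau^n\mathcal L$ only. Then $\sum_n\|C_n\|_{HS}^2\le\int\operatorname{tr}W\,dm$, so $G_0\in H^2$. Since $\tau\mathcal R=\mathcal R$ and multiplication by $\tau$ is unitary, $\mathcal R$ is reducing. Hence $\langle\tau^ku_i,v_j\rangle_W=0$ for all $k\in\mathbb Z$, where $v_j:=\epsilon_j-u_j\in\mathcal R$. Your Gram-matrix computation then gives $W=G_0G_0^*+W_v$ a.e., where $W_v\,dm=W\,dm-G_0G_0^*\,dm$ is the spectral measure of $(v_j)$; in particular $W_v\ge0$ a.e. Now
\[
 \det\Delta=|\det G_0(0)|^2\le\exp\int\log|\det G_0|^2\,dm\le\exp\int\log\det W\,dm\le\det\Delta ,
\]
by Jensen, by $G_0G_0^*\le W$, and by Step 1. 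Equality in Jensen makes $\det G_0$ outer. Equality in the middle gives $\det(G_0G_0^*+W_v)=\det(G_0G_0^*)$ a.e., with $G_0G_0^*>0$ a.e. This forces $W_v=0$, i.e.\ $W=G_0G_0^*$, so Steps 2 and 3 merge into one squeeze. For a spectral measure with a singular part, the argument would stop exactly here: the remote past is then carried by the singular part and need not vanish.

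Two smaller points.
\begin{itemize}
\item The normalised innovations should be $e=\Delta^{-1/2}Z$, where $Z$ is the $r\times r$ matrix function whose rows are $\epsilon_j-P\epsilon_j$. This is a left combination of rows, not right multiplication of each row by $\Delta^{-1/2}$.
\item In the uniqueness step, the adjugate entries lie in $H^{2/(r-1)}$, which is not contained in $H^1$ once $r\ge4$. What you need, and what is true, is that $U=G_0^{-1}G_1$ and $U^{-1}=G_1^{-1}G_0$ are in the Smirnov class. Their unitary boundary values then put them in $H^\infty$ by Smirnov's theorem, and your argument that $U^*=U^{-1}$ on $\T$ forces constancy goes through.
\end{itemize}
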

We call a matrix $G$ outer if $\det G$ is outer. This also means $G$ is invertible. We refer to \cite{Nikolski19} for more details on outer functions, Hardy spaces and Smirnov class. 
We will require the following normalisation of the Wiener-Masani factorisation for the rest of the article.
\begin{lemma}\label{lem:canonical-normalization}
Every Wiener--Masani factor has a unique right-unitary multiple \(G_0\)
such that
\begin{equation}\label{eq:canonical-G0}
 G_0(0)>0,
\end{equation}
where \(>0\) means Hermitian positive definite.
\end{lemma}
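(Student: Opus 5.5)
The natural route is the polar decomposition of the invertible matrix $G(0)$, where $G$ is any Wiener--Masani factor from Theorem~\ref{thm:WM}.

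\emph{Invertibility of $G(0)$.} Since $\det G$ is outer, it has no zeros in $\D$. In particular $\det G(0)\neq 0$, so $A:=G(0)\in\C^{r\times r}$ is invertible.

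\emph{Existence.} Write the polar decomposition $A=PU$, where $P=(AA^*)^{1/2}>0$ and $U=P^{-1}A$. A direct computation gives
\[
UU^*=P^{-1}AA^*P^{-1}=P^{-1}P^2P^{-1}=\Id,
\]
so $U$ is unitary. Set $G_0:=GU^*$. This is a constant right-unitary multiple of $G$, so it has the following properties.
\begin{itemize}
\item It lies in $H^2(\D)^{r\times r}$.
\item Its determinant satisfies $\det G_0=\det G\cdot\overline{\det U}$, a unimodular constant times an outer function, hence outer.
\item On $\T$ we have $G_0G_0^*=GU^*UG^*=GG^*=W$ almost everywhere.
\end{itemize}
Finally $G_0(0)=AU^*=P>0$, so \eqref{eq:canonical-G0} holds.

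\emph{Uniqueness.} Suppose $G_1=GV_1$ and $G_2=GV_2$, with $V_1,V_2$ unitary, both satisfy $G_i(0)>0$. By the uniqueness clause of Theorem~\ref{thm:WM}, any two outer factors of $W$ differ by a constant unitary on the right, so it suffices to compare such multiples. Put $Q=V_1^*V_2$, which is unitary, so that $G_2(0)=G_1(0)Q$. Write $P_i=G_i(0)>0$. Then
\[
P_2^2=P_2P_2^*=P_1QQ^*P_1=P_1^2.
\]
By uniqueness of the positive square root of a positive definite matrix, $P_1=P_2$. Hence $Q=P_1^{-1}P_2=\Id$ and $G_1=G_2$.

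The argument is essentially the existence and uniqueness of the polar decomposition. The only step that needs justification is the invertibility of $G(0)$, and that comes from $\det G$ being outer, hence zero-free in $\D$.
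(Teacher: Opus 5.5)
Your proof is correct and follows the same route as the paper. Invertibility of $G(0)$ comes from outerness of $\det G$, and existence comes from the left polar decomposition $G(0)=PU$ and the multiple $GU^*$. For uniqueness, the paper cites uniqueness of the polar decomposition, while you prove that fact directly from $P_1^2=P_2^2$ and uniqueness of the positive square root; this is the same argument.
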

\begin{proof}
Since \(\det G_0\) is outer, \(G_0(0)\) is invertible.  Use its
\emph{left polar decomposition}
\[
 G_0(0)=PW,
 \qquad
 P=(G_0(0)G_0(0)^*)^{1/2}>0,
 \qquad W\in U(r).
\]
Set \(G=G_0W^*\).  Then
\[
 G(0)=G_0(0)W^*=PWW^*=P>0.
\]

For uniqueness, suppose \(G_1=G_2V\), where \(V\) is unitary and both
\(G_1(0)\) and \(G_2(0)\) are positive.  The left polar decomposition
of \(G_2(0)V\) has positive factor \(G_2(0)\) and unitary factor
\(V\).  But \(G_1(0)\) is already positive, so its polar unitary is
\(\Id\).  Uniqueness of polar decomposition gives \(V=\Id\), and
therefore \(G_1=G_2\).
\end{proof}
The uniformization theorem gives a universal covering map
\begin{equation}\label{eq:universal-cover}
 p:\D\longrightarrow\Omega,
 \qquad
 p(0)=\infty.
\end{equation}
The group of deck transformations $\Gamma$ is the group of disk automorphisms
\(\gamma:\D\to\D\) satisfying
\begin{equation}\label{eq:deck}
p\circ\gamma=p.
\end{equation}
The group of deck transformation \(\Gamma\) is naturally isomorphic to
\(\pi_1(\Omega,\infty)\).

Since \(E=\partial\Omega\) has positive logarithmic capacity,
\cite[Corollary B]{FJ25} implies that \(p\) has radial limits
\[
   p^*(\tau)=\lim_{r\uparrow1}p(r\tau)\in E
\]
for almost every \(\tau\in\mathbb T\).
We can pushforward the Lebesgue measure $dm$ on $\mathbb T$ to $E$ by $p_*$, and it turns out to be the harmonic measure. \cite[(2.6)]{VY14}.
\begin{equation}\label{eq:harmonic-push}
 \int_{\T} f(p_*(\tau))\,dm(\tau)=\int_{\partial\Omega}f(\xi)\,d\omega_\infty(\xi)
\end{equation}
Thus the assumptions we need to pullback the Wiener Masani factorisation is the following.

\begin{ass}\label{ass:1}
Let $M$ be Hermitian positive definite on $E$, with
    \begin{equation}\label{eq:hyp-full}
\begin{aligned}
 &M_{jk}\in L^1(\partial\Omega,\omega_\infty)
      &&(1\leq j,k\leq r),\\
 &\log\det M\in L^1(\partial\Omega,\omega_\infty).
\end{aligned}
\end{equation}
\end{ass}

With these preliminaries we are ready to state one our main theorem.

\begin{theorem}
\label{thm:thm:WM}
    Let $M$ satisfy Assumptions \ref{ass:1}, then 

\begin{enumerate}[label=\textup{(\roman*)},leftmargin=*]

\item There exists $D:\Omega_0\longrightarrow \mathbb C^{r\times r}$ which is holomorphic and pointwise invertible on \(\Omega_0\), with
\[
 D(\infty)>0,
\]
such that
\begin{equation}\label{eq:band-main}
 M(x)=D_+(x)D_+(x)^*
     =D_-(x)D_-(x)^* 
\end{equation}
for almost every \(x \in E\).

\item For every gap $(b_{j-1},a_{j})$, there exists $U_j\in U(r)$ (unitary) such that 
\begin{equation}\label{eq:gap-main}
 D_+(x)=D_-(x)U_j, \qquad x\in (b_{j-1},a_{j}), \qquad j=1,\dots ,g.
\end{equation}
\end{enumerate}
\end{theorem}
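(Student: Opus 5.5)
We pull the problem back to the disc through the universal covering map \eqref{eq:universal-cover}. Define $W(\tau)=M(p^*(\tau))$ for almost every $\tau\in\T$. By the pushforward identity \eqref{eq:harmonic-push}, Assumption~\ref{ass:1} gives $W_{jk}\in L^1(m)$ and $\log\det W\in L^1(m)$. Theorem~\ref{thm:WM} together with Lemma~\ref{lem:canonical-normalization} then yields a unique outer factor $G_0\in H^2(\D)^{r\times r}$ with $W=G_0G_0^*$ a.e. and $G_0(0)>0$.

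The next step is to study how $G_0$ transforms under the deck group $\Gamma$. Fix $\gamma\in\Gamma$. Since $p\circ\gamma=p$, we have $W\circ\gamma=W$ on $\T$. We also use that composition with a disc automorphism preserves $H^2$ and outerness, which holds because $\gamma$ extends analytically across $\T$. Hence $G_0\circ\gamma$ is another outer Wiener--Masani factor of $W$. By the uniqueness part of Theorem~\ref{thm:WM}, there is a constant $U_\gamma\in U(r)$ with
\[
G_0\circ\gamma=G_0\,U_\gamma .
\]
Comparing $G_0\circ\gamma_1\gamma_2$ computed in two ways shows that $\gamma\mapsto U_\gamma$ is a (possibly anti-) homomorphism $\Gamma\to U(r)$. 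Thus $G_0$ descends to a multivalued function on $\Omega$ with unitary monodromy.

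Now we construct $D$. Because $\Omega_0$ is simply connected and $\Omega_0\subset\Omega$ contains $\infty$, the covering restricted over $\Omega_0$ is trivial. So there is a holomorphic section $s:\Omega_0\to\D$ with $p\circ s=\mathrm{id}$ and $s(\infty)=0$. Set $D=G_0\circ s$. This is holomorphic, pointwise invertible (since $\det G_0$ is outer, it is zero-free in $\D$), and satisfies $D(\infty)=G_0(0)>0$.

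Next come the boundary values. Approaching $x\in E$ from above or below inside $\Omega_0$, the lifted path $s(x\pm i\epsilon)$ tends to points of $\T$ lying over $x$, and the nontangential limits of $G_0$ there satisfy $G_0G_0^*=M(x)$. This gives \eqref{eq:band-main}. Here we must check that approaching $E$ nontangentially in $\Omega$ corresponds to nontangential (or at least radial-equivalent) approach in $\D$. I would argue this locally: near an interior point of a cut, $p^{-1}$ is a conformal map onto a neighbourhood of a boundary arc of $\T$ that extends across the boundary by Schwarz reflection, so angles are preserved. This step is the main technical obstacle, because the boundary values are only in $L^2$ and the correspondence must be made with respect to harmonic measure $\omega_\infty$. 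That measure is mutually absolutely continuous with Lebesgue measure on $E$ for finitely many intervals, so a.e. statements transfer.

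For the gap $\Sigma_j$, $D_+$ and $D_-$ are both analytic continuations of $G_0$ to $x\in\Sigma_j$, which lies in $\Omega$. They are evaluated at two lifts $s_+(x)$ and $s_-(x)$ of the same point, so $s_+(x)=\gamma_j(s_-(x))$ for the deck transformation $\gamma_j$ corresponding to the loop that encircles the cuts $[a_0,b_0],\dots,[a_{j-1},b_{j-1}]$ (crossing $\Sigma_j$ once). The map $\gamma_j$ is locally constant along the connected gap, hence constant on $\Sigma_j$. Applying the transformation law gives $D_+(x)=D_-(x)U_{\gamma_j}$, which is \eqref{eq:gap-main} with $U_j=U_{\gamma_j}$. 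The order of factors follows from the convention $G_0\circ\gamma=G_0U_\gamma$, possibly with an inverse depending on the orientation of $\gamma_j$, and $U_{\gamma}^{-1}$ is still unitary.
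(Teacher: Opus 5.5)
Your proposal is correct and follows essentially the same route as the paper. You pull $M$ back to $\T$ via $p^*$, take the normalised Wiener--Masani factor, use deck invariance and uniqueness to get $G_0\circ\gamma=G_0U_\gamma$, set $D=G_0\circ s$ for the section over the simply connected $\Omega_0$, and read off the gap jumps from $s_+=\gamma_j\circ s_-$; your extra justifications (automorphism invariance of $H^2$ and outerness, and the Schwarz-reflection argument matching nontangential approach to a cut with nontangential approach to an arc of $\T$) supply details the paper only asserts.
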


\subsection{Application to matrix-valued orthogonal polynomials}
\label{subsec:MVOP}
 
We are interested in matrix-valued orthogonal polynomials for which the set $E$
appears either as the set of orthogonality or as the support of an equilibrium
measure.  There are two models in which this happens: in the first the weight is
fixed and the several cut structure is imposed from the outset, and in the
second the weight varies with the degree and the several cut structure is
produced by the external field.  We introduce them here, together with the
Riemann--Hilbert problems that characterize the corresponding polynomials.  In
Section \ref{subsubsec:reduction} we reduce both of them to RH Problem
\ref{def:rhp}. 
\subsubsection{Matrix-valued orthogonal polynomials}
\label{subsubsec:MVOP}
 
Let $I\subseteq\R$ be closed and of positive Lebesgue measure, and let
$W:I\to\C^{r\times r}$ be measurable.
 
\begin{ass}\label{ass:MVOP}
\begin{enumerate}
\item[(a)] $W(x)$ is Hermitian positive definite for almost every $x\in I$;
\item[(b)] $\displaystyle\int_I\|W(x)\|\,(1+|x|)^k\,dx<\infty$ for every
  $k=0,1,2,\dots$
\end{enumerate}
\end{ass}
 
Thus all moments of $W$ exist, and the moment functional is positive definite. Therefore for every $n\ge0$ a unique monic
matrix-valued orthogonal polynomial (MVOP) $P_n(x)=x^nI_r+\cdots$ of degree $n$,
characterized by
\begin{equation}\label{eq:orthogonality}
  \int_IP_n(x)\,W(x)\,P_m(x)^*\,dx=\delta_{nm}\,\mathcal H_n,
  \qquad n,m\ge0,
\end{equation}
with $\mathcal H_n$ positive definite, and these polynomials satisfy a three
term recurrence relation
\begin{equation}\label{eq:recurrence}
  xP_n(x)=P_{n+1}(x)+B_nP_n(x)+C_nP_{n-1}(x),
  \qquad P_{-1}(x)=0_r,\quad P_0(x)=I_r,
\end{equation}
with $r\times r$ matrices $B_n$ and $C_n$; see the survey \cite{DPS08}.  If
$W(x)$ is diagonal for every $x$, the MVOPs reduce to $r$ families of scalar
orthogonal polynomials.  
 
The MVOPs are characterized by a Riemann--Hilbert problem of size $2r\times2r$.
In the scalar case this is due to Fokas, Its and Kitaev \cite{FIK92}, and in the
matrix case to Cassatella-Contra and Ma\~nas \cite{CM12} and to Gr\"unbaum, de la
Iglesia and Mart\'inez-Finkelshtein \cite{GIM11}.  We state it separately for the
two models, since the jump contour and the conditions at the endpoints differ,
but in both cases the solution is
\begin{equation}\label{eq:Ysolution}
  Y(z)=
  \begin{pmatrix}
    P_n(z)&\mathcal C(P_nW)(z)\\[3pt]
    -2\pi i\,\mathcal H_{n-1}^{-1}P_{n-1}(z)&
    -2\pi i\,\mathcal H_{n-1}^{-1}\mathcal C(P_{n-1}W)(z)
  \end{pmatrix},
  \qquad
  \mathcal C(f)(z)=\frac1{2\pi i}\int_I\frac{f(s)}{s-z}\,ds,
\end{equation}
the Cauchy transform being taken entrywise.  Indeed, the jump relation follows
from $\mathcal C(f)_+=\mathcal C(f)_-+f$, and the behavior at infinity follows
from orthogonality, since the first $n$ moments of $P_nW$ vanish and the
coefficient of $z^{-n-1}$ in the expansion of $\mathcal C(P_nW)$ at infinity is
$-(2\pi i)^{-1}\mathcal H_n$.  In particular
\begin{equation}\label{eq:extractP}
  P_n(z)=\begin{pmatrix}I_r&0_r\end{pmatrix}Y(z)
         \begin{pmatrix}I_r\\0_r\end{pmatrix}
\end{equation}
is the upper left $r\times r$ block of $Y$, and $B_n$ and $C_n$ can be written
in terms of the coefficients in the expansion of $Y$ at infinity, see
\cite[\S4.1]{GIM11}.

In what follows, we use the standard notation with the Pauli matrix $\sigma_3$,
understood by blocks: for a scalar function $f(z)\ne0$ we write
\[
  f(z)^{\sigma_3}=
  \begin{pmatrix}f(z)I_r&0_r\\0_r&f(z)^{-1}I_r\end{pmatrix}.
\]
All jump contours are subsets of $\R$, oriented from left to right, and $\pm$
denotes the boundary value from $\{\pm\im z>0\}$.

\subsubsection{The first model. Jacobi type weights on \texorpdfstring{$E$}{E}}
\label{subsubsec:model1}
 
\begin{ass}\label{ass:model1}
Let $I=E$ and
\begin{equation}\label{eq:model1weight}
  W(x)=h(x)H(x),
  \qquad
  h(x)=\prod_{\ell=0}^{g}|x-a_\ell|^{\alpha_\ell}\,|x-b_\ell|^{\beta_\ell},
\end{equation}
where
\begin{enumerate}
\item[(a)] $\alpha_\ell,\beta_\ell>-1$ for $\ell=0,1,\dots,g$;
\item[(b)] $H(x)$ is an $r\times r$ complex valued matrix for $x\in E$;
\item[(c)] $H(x)$ is Hermitian positive definite for $x\in E^\circ$;
\item[(d)] $H$ is real analytic on $E$;
\item[(e)] $H(e)\neq 0$   for $e\in\mathcal E$.
\end{enumerate}
\end{ass}
 
The real analyticity in (d) means that $H$ has an analytic extension to a
neighborhood of $E$, that we also denote by $H$.  By (c) and (d) the matrices
$H(e)$ are Hermitian non-negative definite but need not be positive definite,
since eigenvalues of $H$ may vanish at the endpoints; by (e) not all of them
can.  Assumption \ref{ass:MVOP} holds, because $h>0$ on $E^\circ$ and because
$E$ is compact, $H$ is bounded and $h\in L^1(E,dx)$ by (a).  We write $\gamma_e$
for the exponent of $h$ at an endpoint,
\begin{equation}\label{eq:gamma}
  \gamma_{a_\ell}=\alpha_\ell,
  \qquad
  \gamma_{b_\ell}=\beta_\ell,
  \qquad \ell=0,1,\dots,g.
\end{equation}
 
For $g=0$ this is Assumption 1.1 of \cite{DKR23}, whose scalar antecedent is the
modified Jacobi weight of \cite{KMVV04}.  For $r=1$ and $g\ge1$ we are in the
classical situation of strong asymptotics on a system of intervals, studied by
Widom \cite{Wid69} and by Aptekarev \cite{Apt84}.
 
\begin{rhp}\label{rhp:model1}
Let $W$ be as in Assumption \ref{ass:model1} and let $n\ge1$.  We seek
$Y=Y(\cdot\,;n)$ such that
\begin{enumerate}
\item $Y:\C\setminus E\to\C^{2r\times2r}$ is analytic.
\item For $x\in E^\circ$ the matrix $Y$ admits boundary values
  $Y_\pm(x)=\lim_{\varepsilon\to0+}Y(x\pm i\varepsilon)$, related by
  \begin{equation}\label{eq:model1jump}
    Y_+(x)=Y_-(x)\begin{pmatrix}I_r&W(x)\\0_r&I_r\end{pmatrix},
    \qquad x\in(a_\ell,b_\ell),\quad\ell=0,\dots,g.
  \end{equation}
\item As $z\to\infty$,
  \begin{equation}\label{eq:model1infty}
    Y(z)=\bigl(I_{2r}+O(z^{-1})\bigr)z^{n\sigma_3}.
  \end{equation}
\item To ensure a unique solution we also specify endpoint conditions.  As
  $z\to e$ with $e\in\mathcal E$, as in \cite[\S2]{GIM11},
  \begin{equation}\label{eq:model1endpoint}
    Y(z)=\begin{pmatrix}O(1)&O(\eta_e(z))\\O(1)&O(\eta_e(z))\end{pmatrix},
    \qquad
    \eta_e(z)=
    \begin{cases}
      |z-e|^{\gamma_e},&-1<\gamma_e<0,\\
      \log|z-e|,&\gamma_e=0,\\
      1,&\gamma_e>0,
    \end{cases}
  \end{equation}
  with $\gamma_e$ as in \eqref{eq:gamma}.
\end{enumerate}
\end{rhp}
 
RH Problem \ref{rhp:model1} is solved by \eqref{eq:Ysolution} with $I=E$.  Note
that $Y$ has no jump on the gaps $\Sigma_1,\dots,\Sigma_g$, since there is no
orthogonality there.
 
\subsubsection{The second model. Varying exponential weights}
\label{subsubsec:model2}
 
\begin{ass}\label{ass:model2}
Let $I=\R$ and
\begin{equation}\label{eq:model2weight}
  W_n(x)=e^{-nV(x)}M(x),
\end{equation}
where
\begin{enumerate}
\item[(a)] $V$ is real valued and real analytic on $\R$, with
  $V(x)/\log(1+|x|^2)\to+\infty$ as $|x|\to\infty$;
\item[(b)] the equilibrium measure $\mu_V$ in the external field $V$ is
  supported on $E$ and is regular with density $\psi$;
\item[(c)] $M(x)=Q(x)Q(x)^{*}$ is an $r\times r$  matrix, Hermitian positive
  definite for $x\in\R$, and independent of $n$;
\item[(d)] $Q$ is real analytic on $E$, and $\|Q(x)\|$ grows at most
  polynomially as $|x|\to\infty$.
\end{enumerate}
\end{ass}
 
Here $\mu_V$ is the probability measure on $\R$ minimizing
$\iint\log|x-y|^{-1}d\mu(x)d\mu(y)+\int V\,d\mu$, which exists and is unique by
(a), see \cite{ST97}.  Regular means that its density is positive on $E^\circ$,
vanishes like a square root at each point of $\mathcal E$, and that the
Euler--Lagrange inequality is strict on $\R\setminus E$.  These are the standard
hypotheses of the scalar steepest descent analysis, the several cut case being
treated in \cite{DKMVZ99b}; that (b) holds generically is the content of
\cite{KM00}.  Assumption \ref{ass:MVOP} holds for every $n\ge1$, since by (a)
the factor $e^{-nV}$ decays faster than any power of $|x|^{-1}$.  Conditions (c)
and (d) say that the matrix part of the weight does not depend on the large
parameter, as in \cite[Assumption 1]{DR25}; for $g=0$ Assumption
\ref{ass:model2} is precisely the setting of \cite{DR25}, where $\mu_V$ is
assumed to be supported on a single interval.
 
\begin{rhp}\label{rhp:model2}
Let $W_n$ be as in Assumption \ref{ass:model2} and let $n\ge1$.  We seek
$Y=Y(\cdot\,;n)$ such that
\begin{enumerate}
\item $Y:\C\setminus\R\to\C^{2r\times2r}$ is analytic.
\item For $x\in\R$ the matrix $Y$ admits boundary values
  $Y_\pm(x)=\lim_{\varepsilon\to0+}Y(x\pm i\varepsilon)$, related by
  \begin{equation}\label{eq:model2jump}
    Y_+(x)=Y_-(x)\begin{pmatrix}I_r&W_n(x)\\0_r&I_r\end{pmatrix},
    \qquad x\in\R.
  \end{equation}
\item As $z\to\infty$,
  \begin{equation}\label{eq:model2infty}
    Y(z)=\bigl(I_{2r}+O(z^{-1})\bigr)z^{n\sigma_3}.
  \end{equation}
\end{enumerate}
\end{rhp}
 
No endpoint conditions are needed, the jump contour being all of $\R$, and RH
Problem \ref{rhp:model2} is solved by \eqref{eq:Ysolution} with $I=\R$ and
$W=W_n$.  The set $E$ does not appear in RH Problem \ref{rhp:model2}; it enters
only through the equilibrium measure, at the first transformation.

The two models differ in their potential theory, and their local parametrices
are different: those of the first model are built out of Bessel functions, of
orders determined by \eqref{eq:gamma} and by the orders of vanishing of the
eigenvalues of $H$ at the endpoints, as in \cite[\S3.5]{DKR23}, while those of
the second are built out of Airy functions, as in \cite[\S5.5]{DR25}.  They
agree, however, in the global problem that remains away from the endpoints.  We
show in Section \ref{subsubsec:reduction} that in both models the steepest
descent analysis leads to the following problem, with data $K_1,\dots,K_g\in\Ur$
that we identify explicitly in terms of the matrix Szeg\H{o} function of Theorem
\ref{thm:thm:WM} and of the equilibrium measure.

\begin{rhp}\label{def:rhp}
Let $K_1,\dots,K_g\in\Ur$. We seek a solution
\[
N:\C\setminus[a_0,b_{g}]\longrightarrow \C^{2r\times2r}
\]
such that:
\begin{enumerate}
\item[\textup{(N0)}] $N$ is holomorphic on $\C\setminus[a_0,b_{g}]$.
\item[\textup{(N1)}] 
\[
N_+=N_-\begin{pmatrix}0&I_r\\-I_r&0\end{pmatrix} \quad\text{on } (a_k,b_k),\ k=0,\dots,g,
\]

\item[\textup{(N2)}]$$N_+=N_-\operatorname{diag}(K_j,K_j^{-1})\quad\text{on }(b_{j-1},a_{j}),\qquad\ j=1,\dots,g;$$

\item[\textup{(N3)}] $N(z)=I_{2r}+O(z^{-1})$ as $z\to\infty$;
\item[\textup{(N4)}] as $z\to e$ with $e\in\{a_k,b_k\}$,
\[
N(z)=O\bigl(|z-e|^{-1/4}\bigr)).
\]
\end{enumerate}
\end{rhp}

We show in Section \ref{sec:gp} that the model problem is uniquely solvable. Using the model solution one then reverses the transformations $Y\mapsto\dots S\mapsto R$. This then gives strong asymptotics of $P_n$ in the entire complex plane and its recurrence coefficients. Of course sufficient care must be taken in reversing every step of the transformation and using the proper local parametrix. We refer to \cite{DKR23, DR25}  where such a calculation is done in detail.

\begin{remark}
We point out that the vector-bundle techniques developed here also apply to the scalar setting. For Jacobi-type weights, scalar Szegő functions generally acquire multiplicative jumps of modulus one across the gaps \cite[\S4.1]{KV03}. Likewise, for varying weights of the form $e^{-nV}$ \cite{DKMVZ99b}, when the equilibrium measure has multi-component support, the associated model problem has constant jumps of modulus one along cycles of the corresponding hyperelliptic Riemann surface. These multivalued scalar functions may therefore be interpreted as meromorphic sections of flat unitary line bundles.

From this perspective, the bundle-theoretic formulation allows one to bypass the usual construction in terms of quasi-periodic Jacobi theta functions and, in particular, to avoid the delicate issue of special divisors. Our main point is that, once the prescribed jump data have been encoded in a unitary bundle, the essential remaining obstruction to solving the model problem is the normalization at infinity. Proposition~\ref{prop:ev} resolves precisely this issue by showing that the relevant evaluation map is an isomorphism. Once this normalization problem has been resolved, existence and uniqueness of the model problem will be more tractable.

\end{remark}
In section \ref{vanish} we take an alternate route to show the existence and uniqueness of the global parametrix. We rely on a vanishing lemma of Proposition \ref{prop:vanishing} to show RH problem \ref{def:rhp} has a unique solution. In the next subsection \ref{subsec:fredholm} we show the associated singular integral operator has Fredholm index $0$, so trivial kernel guarantees a unique $L^2$ solution of the RH problem \ref{def:rhp}. Readers just interested in the asymptotics may entirely skip this section and move to Section \ref{subsubsec:reduction} where RH problem \ref{rhp:model1} and RH problem \ref{rhp:model2} are reduced to RH problem \ref{def:rhp}. In that setting the asymptotics of MVOPs are obtained in \eqref{eq:MVOP1} and \eqref{MVOP2}

\section{Proof of Theorem \ref{thm:thm:WM}}
\begin{proof}
    Define
\begin{equation}\label{eq:lifted-weight}
 \cM(\tau):=M(p_*(\tau))
\end{equation} Then $\cM$ satisfies all the assumptions of Theorem \ref{thm:WM}.
Thus Theorem \ref{thm:WM} and Lemma \ref{lem:canonical-normalization} gives a unique
\[
 G_0\in H^2(\D)^{r\times r}
\]
such that
\begin{equation}\label{eq:G-main}
 \cM=G_0G_0^*
 \quad\text{almost everywhere on }\T,
\end{equation}
and \(\det G_0\) is outer and $G(0)>0$.

Observe that for \(\gamma\in\Gamma\) we have by \eqref{eq:deck}
$p_*(\gamma(\tau))=p_*(\tau)$
for almost every \(\tau\).  Therefore
\begin{equation}\label{eq:cM-invariance}
\begin{aligned}
 (\cM\circ\gamma)(\tau)
=M(p_*(\gamma(\tau)))
 =M(p_*(\tau))
=\cM(\tau)
\end{aligned}
\end{equation}
almost everywhere. We then find
\begin{align*}
 (G\circ\gamma)(\tau)(G\circ\gamma)(\tau)^*
 =(GG^*)(\gamma(\tau))
 =\cM(\gamma(\tau))=\cM(\tau)
\end{align*}
Thus $G\circ\gamma$ is another factorisation (which is in $H^2(\mathbb D)^{r\times r}$ and outer), thus it must be $G\circ\gamma=GU_\gamma$, where $U_\gamma\in U(r)$.

Due to the fact $\Omega_0$ is simply connected the inclusion
\[
 \iota_0:\Omega_0\hookrightarrow\Omega
\]
has a unique lift
\begin{equation}\label{eq:section-s}
 s:\Omega_0\longrightarrow\D,
 \qquad
 p\circ s=\operatorname{id}_{\Omega_0},
 \qquad
 s(\infty)=0.
\end{equation}
Define
\[
 D(z)=G(s(z)).
\]
It is a composition of holomorphic maps, so it is holomorphic on
\(\Omega_0\).  Moreover,
\[
 \det D(z)=\det G(s(z))\neq0,
\]
so \(D(z)\) is invertible at every \(z\in\Omega_0\).  At infinity,
\[
 D(\infty)=G(s(\infty))=G(0)>0.
\]
Fix  $x\in(a_\ell,b_\ell)$. Approaching $x$ from above and from below
gives two points $s_+(x)$, $s_-(x)$ on $\T$, both  over $x$.  \eqref{eq:G-main}
therefore applies at $s_\pm(x)$ for a.e.\ $x$. Thus
\[
  D_\pm(x)D_\pm(x)^* = G(s_\pm(x))G(s_\pm(x))^* = M(x).
\]
Fix $x_0\in \Sigma_j$. Both $s_\pm(x_0)$ lie in the fibre $p^{-1}(x_0)$, on which $\Gamma$ acts
simply transitively, so there is a unique $\gamma_j\in\Gamma$ with $s_+(x_0)=\gamma_j(s_-(x_0))$.
Since $\gamma_j\circ s_-$ and $s_+$ are lifts of $\Sigma_j\hookrightarrow\Omega$ agreeing at $x_0$ and
$\Sigma_j$ is connected, unique lifting gives $s_+=\gamma_j\circ s_-$ on all of $\Sigma_j$. Hence, by the
 $G\circ\gamma_j=GU_j$ with $U_j\in U(r)$ constant,
\[
  D_+ \;=\; G\circ s_+ \;=\; G\circ\gamma_j\circ s_- \;=\; D_-\,U_j ,\qquad U_j\in\mathrm U(r).
\]

\end{proof}

\begin{lemma}\label{lem:Dunique}
Let $D$ and $\widetilde D$ both satisfy the conclusions of
Theorem~\ref{thm:thm:WM}, with unitary matrices $U_j$ and $\widetilde U_j$ as in
\eqref{eq:gap-main}. Assume $D$, $D^{-1}$, $\widetilde D$, $\widetilde D^{-1}$ are
bounded on $\Omega_0$. Then $D = \widetilde D$ and $U_j = \widetilde U_j$ for
every $j$.
\end{lemma}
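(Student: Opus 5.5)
Consider the quotient $F(z):=\widetilde D(z)^{-1}D(z)$ on $\Omega_0$. By the boundedness hypotheses, $F$ and $F^{-1}$ are bounded holomorphic functions on $\Omega_0$. We study its jumps on the two kinds of pieces of $[a_0,b_g]$, use a symmetry argument to extend $F$ to all of $\Omega_0$ reflected across the real axis, and conclude by Liouville together with the normalisation $D(\infty),\widetilde D(\infty)>0$.

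\textbf{Step 1 (cuts).} For a.e.\ $x\in E$ we have $D_\pm D_\pm^*=M=\widetilde D_\pm\widetilde D_\pm^*$, so $F_\pm(x)=\widetilde D_\pm^{-1}D_\pm$ satisfies $F_\pm F_\pm^*=\widetilde D_\pm^{-1}M\widetilde D_\pm^{-*}=I$. Thus $F_\pm$ are unitary a.e.\ on $E$. \textbf{Step 2 (gaps).} On $\Sigma_j$, $F_+=\widetilde U_j^{-1}\widetilde D_-^{-1}D_-U_j=\widetilde U_j^{*}F_-U_j$, which is again a unitary conjugation-type relation. So on the whole of $[a_0,b_g]$ (a.e.) the boundary values of $F$ are controlled. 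The cleanest way to exploit this is to consider $H(z):=F(z)\,F(\bar z)^{*}$, or better $F(z)^{-*}$-type reflections. I would instead argue via the Hermitian function $\Phi:=F^{-*}$ evaluated at $\bar z$: define $\widehat F(z)=\bigl(F(\bar z)^{*}\bigr)^{-1}$, holomorphic and bounded on $\Omega_0$. Unitarity of $F_\pm$ on $E$ gives $\widehat F_\mp=F_\pm$ there, and on gaps one checks similarly $\widehat F_\mp = \widetilde U_j^{\ast\ast}\cdots$; the aim is to show $F$ and $\widehat F$ glue across $E$.

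A more robust route, which I would actually use, is to go back to the universal cover: since $D=G\circ s$ and $\widetilde D$ likewise extends by analytic continuation through the gaps (with unitary monodromy), $\widetilde D$ defines a function $\widetilde G$ on $\D$ with $\widetilde G\circ\gamma=\widetilde G\,\widetilde U_\gamma$, where the map $\gamma\mapsto\widetilde U_\gamma$ is generated by the $\widetilde U_j$. Boundedness of $\widetilde D$ gives $\widetilde G\in H^\infty(\D)^{r\times r}\subset H^2$, and boundedness of $\widetilde D^{-1}$ makes $\det\widetilde G$ bounded away from $0$, hence outer. The boundary relation $\widetilde G\widetilde G^*=\cM$ a.e.\ on $\T$ follows since a.e.\ point of $\T$ lies over $E$ and is the image under some deck transformation of a boundary point of $s(\Omega_0)$, and unitary factors cancel in $\widetilde G\widetilde G^*$. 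Then by the uniqueness in Theorem~\ref{thm:WM} and Lemma~\ref{lem:canonical-normalization}, $\widetilde G(0)=\widetilde D(\infty)>0$ forces $\widetilde G=G$, so $\widetilde D=\widetilde G\circ s=D$, and then $U_j=D_-^{-1}D_+=\widetilde U_j$ on $\Sigma_j$.

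The main obstacle is justifying the lift: that continuation of $\widetilde D$ across gaps, with right unitary multipliers, is consistent (well-defined on $\D$, i.e.\ depends only on the homotopy class), and that $\widetilde G$ has the right boundary values. Consistency holds because $\Omega$ is obtained from $\Omega_0$ by gluing across the gaps, $\pi_1(\Omega,\infty)$ is free on loops crossing each gap, so the assignment $\gamma_j\mapsto\widetilde U_j$ extends to a homomorphism and continuation along paths is single-valued on $\D$. For the boundary values, bounded holomorphic $\widetilde G$ has nontangential limits a.e., and these agree with the $\pm$ limits of $\widetilde D$ composed with deck transformations, using that $p$ is locally conformal near a.e.\ boundary arc (the boundary of $\Omega$ consists of analytic arcs).
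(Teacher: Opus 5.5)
Your argument is correct, but the route you actually commit to (the lift to $\D$) differs from the paper's. The paper never leaves $\Omega$. With $V=D^{-1}\widetilde D$ it uses exactly your Steps~1--2: $V_\pm$ is unitary on $E$, and $V_+=U_j^{-1}V_-\widetilde U_j$ on $\Sigma_j$. From this, $\log\|V\|$ and $\log\|V^{-1}\|$ are single-valued across the gaps, so they are bounded subharmonic functions on $\Omega$ with boundary values $0$ a.e.\ on $E$. The maximum principle gives $\|V\|,\|V^{-1}\|\le 1$, so $V$ is unitary-valued and hence constant, and positivity of $D(\infty)$, $\widetilde D(\infty)$ forces $V=I_r$.

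You instead lift to functions on $\D$ with $\widetilde G\circ\gamma=\widetilde G\,\widetilde U_\gamma$, and let the uniqueness clause of Theorem~\ref{thm:WM} together with Lemma~\ref{lem:canonical-normalization} do the work. Boundedness of $D^{\pm1}$ is used only to certify that the lift is an $H^2$ factor with outer determinant. This gives a conceptual statement: every such $D$ is the canonical $G\circ s$, which is the converse of the proof of Theorem~\ref{thm:thm:WM}. The cost is Fuchsian-group input that the paper avoids.

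In particular, you assert that a.e.\ $\tau\in\T$ is a deck translate of a point on the $\T$-arcs of $\overline{s(\Omega_0)}$. This is the statement that the limit set $\Lambda_\Gamma$ of $\Gamma$ is Lebesgue-null. It is true, but you should justify it. For example, the Poisson integral of $1_{\T\setminus\Lambda_\Gamma}$ is $\Gamma$-invariant. It therefore descends to a bounded harmonic function on $\Omega$ with boundary value $1$ at every point of $E^\circ$, and so it is identically $1$.

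Two smaller points:
\begin{enumerate}
\item The lemma's $D$ is arbitrary, so run the lifting argument for $D$ as well, instead of writing $D=G\circ s$.
\item Single-valuedness of the lift on $\D$ is just the monodromy theorem; freeness of $\pi_1(\Omega,\infty)$ is not needed.
\end{enumerate}
The abandoned reflection idea can be dropped.
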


\begin{proof}
Put $V = D^{-1}\widetilde D$, which is analytic and invertible on $\Omega_0$, and
note that $V$ and $V^{-1}$ are bounded there.

On a cut, \eqref{eq:band-main} holds for both functions, so almost everywhere on
$E$
\[
   V_\pm V_\pm^{\,*} = D_\pm^{-1}\widetilde D_\pm \widetilde D_\pm^{\,*}
   D_\pm^{-*} = D_\pm^{-1} M D_\pm^{-*} = I_r .
\]
Hence $V_\pm$ is unitary almost everywhere on $E$, and in particular
$\|V_\pm\| = \|V_\pm^{-1}\| = 1$ there.

On a gap, \eqref{eq:gap-main} gives $V_+ = U_j^{-1}V_-\widetilde U_j$. Let $B$ be
a disc around a point of $\Sigma_j$ meeting no cut, and set $H = V$ on the lower
half of $B$ and $H = U_jV\widetilde U_j^{-1}$ on the upper half. Then
$H_+ = U_jV_+\widetilde U_j^{-1} = V_- = H_-$, so $H$ is analytic on $B$ by
Morera's theorem. Since multiplying by unitary matrices on either side does not
change the norm, $\|H\| = \|V\|$ and $\|H^{-1}\| = \|V^{-1}\|$ on $B$. Therefore
$\|V\|$ and $\|V^{-1}\|$, which are defined on $\Omega_0$, extend continuously to
the gaps and are subharmonic on all of $\Omega$; for the subharmonicity recall
that $\log\|F\| = \sup\log|u^*Fv|$, the supremum over unit vectors $u,v$, is
subharmonic whenever $F$ is analytic.
The functions
\[
 u(z)=\log\|V(z)\|,\qquad
 \widetilde u(z)=\log\|V(z)^{-1}\|
\]
therefore extend to bounded subharmonic functions on \(\Omega\).  Their
boundary values are equal to \(0\) almost everywhere on \(E\).  Since
\(\Omega\), regarded as a domain in the Riemann sphere, contains
\(\infty\), the maximum principle for bounded subharmonic functions
gives
\[
 u(z)\leq 0,\qquad \widetilde u(z)\leq 0,\qquad z\in\Omega.
\]
Thus, for \(z\in\Omega_0\),
\[
 \|V(z)\|\leq 1,\qquad \|V(z)^{-1}\|\leq 1.
\]
For every unit vector \(v\),
\[
 1=\|V(z)^{-1}V(z)v\|\leq \|V(z)v\|\leq 1.
\]
Hence \(V(z)\) is unitary for every \(z\in\Omega_0\).
 Consequently $\sum_{j,k}|V_{jk}|^2 = \operatorname{tr}(VV^*) = r$
is constant. Each $|V_{jk}|^2$ is subharmonic and the sum is constant, so each is
harmonic, and $\Delta|V_{jk}|^2 = 4|V_{jk}'|^2$ forces $V' \equiv 0$. Thus $V$ is
a constant unitary matrix.

So $\widetilde D = DV$, and at infinity $\widetilde D(\infty) = D(\infty)V$ with
both factors positive definite, whence $V = I_r$ exactly as in the proof of
Lemma~\ref{lem:canonical-normalization}. Therefore $\widetilde D = D$, and then
\eqref{eq:gap-main} gives $\widetilde U_j = U_j$.
\end{proof}

\begin{lemma}\label{lem:Dboundary}
Let \(I\) be an open subinterval of \(E^\circ\) on which \(M\) is real
analytic and positive definite. Then \(D_\pm\) have continuous boundary
values on \(I\), and
\[
 M(x)=D_+(x)D_+(x)^*
     =D_-(x)D_-(x)^*,
 \qquad x\in I.
\]
If \(M\) is real analytic and positive definite on all of \(E\), and
\(D,D^{-1}\) are bounded on \(\Omega_0\), then \(D_\pm\) extend
continuously to the endpoints and the same identity holds for every
\(x\in E\).
\end{lemma}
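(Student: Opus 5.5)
The plan is to prove the interior statement by a local reflection (Schwarz-type) argument that uses the factorisation identity itself. We then obtain the endpoint statement by passing to a local uniformizing variable $\zeta=\sqrt{z-e}$ and using boundedness.

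\emph{Interior points.} Fix $x_0\in I$ and a small disc $B$ centred at $x_0$ with $B\cap\R\subset I$. Write $B^\pm=B\cap\{\pm\operatorname{Im}z>0\}$, and shrink $B$ so that $M$ extends analytically to $B$. Near an interior point of a cut the branch $s$ from the proof of Theorem~\ref{thm:thm:WM} maps $B^+$ conformally onto a region of $\D$ bounded in part by an arc of $\T$. Indeed $p$ extends across that arc by Schwarz reflection, since $p^*$ is real there. Hence $D|_{B^+}$ inherits from $G\in H^2(\D)^{r\times r}$ the Smirnov class $E^2(B^+)$. Moreover $\det D=\det G\circ s$ is outer, so $D^{-1}=\operatorname{adj}D/\det D$ lies in $N^+(B^+)$. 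On $B^-$ I define
\[
 \widehat D(z)=M(z)\,\bigl(D(\bar z)^*\bigr)^{-1},\qquad z\in B^-,
\]
which is analytic in $z$ because $z\mapsto D(\bar z)^*$ is. By \eqref{eq:band-main}, $\widehat D_-(x)=M(x)D_+(x)^{-*}=D_+(x)$ for a.e.\ $x\in B\cap\R$. The glued function $F$, equal to $D$ on $B^+$ and to $\widehat D$ on $B^-$, therefore has matching boundary values a.e. $\widehat D$ belongs to $N^+(B^-)$, and its boundary values equal $D_+\in L^2$. By Smirnov's theorem ($N^+$ with $L^p$ boundary values lies in $E^p$) we get $\widehat D\in E^2(B^-)$. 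Then $F\in E^1$ on both halves with equal boundary values, and the Cauchy integral representation (equivalently Morera's theorem) shows that $F$ is analytic on $B$. Hence $D_+=F|_{B\cap\R}$ is real analytic, in particular continuous. The same construction in the lower half-plane, using $D_-$, handles $D_-$. Now $D_\pm D_\pm^*$ and $M$ are continuous and agree a.e., so they agree everywhere on $I$.

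\emph{Endpoints.} Suppose $M$ is analytic and positive definite on all of $E$, and $D,D^{-1}$ are bounded. Take $e\in\mathcal E$, say $e=a_j$ with the gap $\Sigma_j$ on the left; the outer endpoints $a_0,b_g$ are simpler. Put $z=e+\zeta^2$. The upper and lower sides of the cut and of the gap become four rays in a punctured $\zeta$-disc. Starting from $D$ in the image of the upper half-plane, I continue across the cut rays by the reflection formula above, which is now valid up to the endpoint because $M$ is analytic at $e$. Across the gap rays I continue by the constant unitary jump \eqref{eq:gap-main}. This produces a bounded analytic $F$ on a slit punctured disc, with a single constant jump $F_+=F_-C$ along one ray. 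Here $C$ is a product of $U_j^{\pm1}$ conjugated by the reflection; after a change of basis it is unitary, hence diagonalizable as $C=e^{2\pi i A}$ with $A$ Hermitian and spectrum in $[0,1)$. Then $F\zeta^{-A}$ is single valued on the punctured disc and $O(|\zeta|^{-1+\epsilon})$, so it has a removable singularity. Thus $F=H\zeta^{A}$ with $H$ analytic at $0$, and since the eigenvalues of $A$ are nonnegative, $F$ extends continuously to $\zeta=0$. Consequently $D_\pm$ extend continuously to $e$, and the identity $M=D_\pm D_\pm^*$ passes to the limit.

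The main obstacle is the function-theoretic justification in the interior step: one must check that $D$ really lies in a Smirnov class on the half-disc, which requires transferring $H^2$ through $s$ with control of the conformal map up to the boundary, and that $\widehat D$ is in $N^+$ so that a.e.\ equality of boundary values forces analyticity. I would first try to avoid this entirely by using boundedness of $D$ and $D^{-1}$ where it is available, since bounded functions make the gluing immediate via Morera's theorem. In the general case I would fall back on the Smirnov-class argument sketched above. A secondary difficulty at the endpoints is keeping track of how the reflection twists the constant jump, namely verifying that the total monodromy $C$ is indeed unitary.
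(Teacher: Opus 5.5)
Your proposal is correct and is essentially the paper's argument. The paper performs the same reflection $\widehat G_0=\mathcal M\,(G_0^\sharp)^{-1}$ upstairs across arcs of $\T$ and removes the isolated singularities at preimages of the endpoints by boundedness of $D^{\pm1}$. You reflect downstairs across the cut and let $\zeta=\sqrt{z-e}$ play the role of the disc coordinate, and you supply Smirnov-class details that the paper leaves implicit. One of those details needs patching: on the semicircular part of $\partial B^-$ the $L^2$ bound required by Smirnov's theorem is not automatic near the two corners. Local boundedness of $D^{\pm1}$ near compact subintervals of $I$ settles this; you get it by splitting off the outer factor over an arc where $M$ is bounded above and below.

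One simplification: your monodromy is trivial, $C=I_r$. Continue across the upper gap ray by $U_j$, then reflect the whole upper $\zeta$-half-disc via $F(\zeta)=M(e+\zeta^2)\,F(\bar\zeta)^{-*}$. This gives the same value $M D_-^{-*}U_j$ from both sides of the lower gap ray, so $F$ is bounded and analytic on the punctured disc and the $\zeta^{-A}$ step is unnecessary.
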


\begin{proof}
Let \(A\subset\mathbb T\) be a boundary arc on which the covering map
\(p\) takes its values in \(I\). Both \(p\) and
\(\mathcal M=M\circ p\) extend holomorphically across \(A\). If \(G_0\)
is the Wiener--Masani factor, put
\[
 G_0^\sharp(w)=G_0(1/\overline w)^*
\]
and, on the exterior side of \(A\), define
\[
 \widehat G_0(w)=\mathcal M(w)
                  \bigl(G_0^\sharp(w)\bigr)^{-1}.
\]
Since \(\mathcal M=G_0G_0^*\) almost everywhere on \(A\), the boundary
values of \(G_0\) and \(\widehat G_0\) agree almost everywhere there.
They therefore extend holomorphically across \(A\). Consequently
\(D=G_0\circ s\) has continuous boundary values on \(I\), and the
almost everywhere identity extends to every \(x\in I\).

Under the assumptions in the second statement, the same argument
applies up to the preimages of the endpoints. The remaining
singularities are removable by the boundedness of \(D\) and
\(D^{-1}\).
\end{proof}
\section{Riemann surface and vector bundle preliminaries}\label{sec:he}

We denote
\begin{equation}\label{def:aobo}
A_0(z):=\prod_{\ell=0}^{g}(z-a_\ell),\qquad
B_0(z):=\prod_{\ell=0}^{g}(z-b_\ell),
\end{equation}
and let $X$ be the Riemann surface of the algebraic curve
\begin{equation}\label{eq:curve}
y^2=A_0(z)B_0(z),
\end{equation}
with hyperelliptic projection $\pi:X\to\widehat{\mathbb C}$, $\pi(z,y)=z$. The
$2g+2$ zeros of $A_0B_0$ are simple, so $X$ is hyperelliptic of genus $g$ with
simple ramification exactly over $a_0,b_0,\dots,a_{g},b_{g}$; since
$\deg(A_0B_0)=2g+2$ is even, $\infty$ is unramified and has two preimages.

 We fix the branch $y$ with $y(x)>0$
for $x>b_{g}$, and put
\[
P_1(z):=(z,y(z)),\qquad P_2(z):=(z,-y(z)),\qquad z\in\Omega .
\]
The two \emph{sheets} of $X$ are
\begin{equation}\label{eq:sheets}
X_1:=\{P_1(z):z\in\Omega\},\qquad X_2:=\{P_2(z):z\in\Omega\},
\end{equation}
so that $X=X_1\sqcup X_2\sqcup\pi^{-1}(E)$ and $\pi$ maps each sheet
biholomorphically onto $\Omega$. We write $\infty_1\in X_1$ and $\infty_2\in X_2$
for the two points over $\infty$.

Since $A_0B_0$ has real coefficients, $y(\bar z)=\overline{y(z)}$. On a cut
$A_0B_0<0$, so the boundary values $y_\pm$ are purely imaginary; counting the
$2g+1-2k$ negative factors of $A_0B_0$ on $(a_k,b_k)$ gives
\begin{equation}\label{eq:y-plus}
y_+(x)=i(-1)^{g-k}\bigl|A_0(x)B_0(x)\bigr|^{1/2}=-y_-(x),
\qquad x\in(a_k,b_k).
\end{equation}

\begin{definition}\label{def:involutions}
On $X$ define
\[
\iota(z,y):=(z,-y),\qquad \tau(z,y):=(\bar z,-\bar y).
\]
The map $\iota$ is a holomorphic involution (the sheet interchange); $\tau$ is an
\emph{antiholomorphic} involution.
\end{definition}

By \eqref{eq:sheets} and $y(\bar z)=\overline{y(z)}$ we have
$\tau(P_1(z))=P_2(\bar z)$, so $\tau$ interchanges the two sheets. Moreover
$\tau$ fixes $(z,y)$ precisely when $z\in\mathbb R$ and $y\in i\mathbb R$, i.e.\
when $A_0(z)B_0(z)\le 0$, i.e.\ when $z\in E$; hence
$\operatorname{Fix}(\tau)=\pi^{-1}(E)$ (here we denote $\operatorname{Fix}$ as the fix points). We set
\[
S:=\overline{X_1}\subset X,
\]
a compact bordered Riemann surface with $\partial S=\pi^{-1}(E)$ a disjoint union
of $g+1$ real-analytic ovals, and $X=S\cup\tau(S)$, $S\cap\tau(S)=\partial S$.

\begin{definition}\label{def:cut}
For $j=1,\dots,g$ put
\[
\Gamma_j:=\pi^{-1}\big([b_{j-1},a_{j}]\big)\subset X,\qquad X_0=X \setminus\left(\bigcup_{j=1}^{g}\Gamma_j
\right).\]
\end{definition}

Each $\Gamma_j$ is a real-analytic Jordan curve (two arcs over the open gap, joined at the
branch points $b_{j-1}$ and $a_{j}$), the $\Gamma_j$ are pairwise disjoint, and
$\Gamma_j\cap\partial S=\{b_{j-1},a_{j}\}$.

Let $W_j$ be a $\tau$-invariant open tubular neighbourhood of $\Gamma_j$ with
$\overline{W_j}\cap\overline{W_k}=\emptyset$ for $j\ne k$ and
$\overline{W_j}\cap\{\infty_{1},\infty_2\}=\emptyset$. Then $W_j\setminus\Gamma_j$ has exactly two
components $W_j^+$ and $W_j^-$, each of which is $\tau$-invariant, and,  we label them so
that
\[
W_j^+\supset\{X_1:z\ \text{near}\ \Sigma_j,\ \im z>0\}\cup\{X_2: z \hspace{0.1cm}\text{near} \hspace{0.1cm}\Sigma_j,\hspace{0.1cm}\im z<0\},\qquad
\]
and 
\[W_j^-\supset\{X_1:z \hspace{0.1cm}\text{near} \hspace{0.1cm}\Sigma_j,\hspace{0.1cm}\im z<0\}\cup\{X_2:z \hspace{0.1cm}\text{near} \hspace{0.1cm}\Sigma_j,\hspace{0.1cm}\im z>0\}
\]

We use row-vector conventions: local coordinates of a section are rows in $\C^{1\times r}$,
and a change of chart acts on the right, $s_\beta=s_\alpha T_{\alpha\beta}$, with
$T_{\alpha\beta}T_{\beta\gamma}=T_{\alpha\gamma}$.

\begin{definition}\label{def:EK}
Given $K_1,\dots,K_g\in\Ur$, let $\{X_0,W_1,\dots,W_g\}$ be the open cover of $X$ of
Definition~\ref{def:cut} and define
\[
  T_{X_0W_j}:=\begin{cases} I_r & \text{on } W_j^+,\\[2pt] K_j & \text{on } W_j^-,\end{cases}
  \qquad T_{W_jX_0}:=T_{X_0W_j}^{-1},
\]
and take all other transition matrices to be $I_r$. Since $W_j\cap W_k=\emptyset$ for $j\neq k$, the
cocycle condition holds trivially. The resulting rank-$r$ holomorphic vector bundle is
denoted $\Ek\to X$. Every transition matrix is a constant unitary matrix; thus $\Ek$ is a
flat unitary bundle and thus of degree $0$ \cite[\S6]{Gunning67}.

 We write $e^{X_0}$ and $e^{W_j}$ for the frames of $\Ek$ over $X_0$ and $W_j$
furnished by this construction. A meromorphic section is recorded in these frames
by rows of meromorphic functions, transforming on the overlap by
$U_{W_j}=U_{X_0}T_{X_0W_j}$.
\end{definition}

A change of frame acts by $v\mapsto vT$ with $T\in\Ur$ constant,
and
\[
  (wT)(vT)^*=wTT^*v^*=wv^*.
\]
Hence $wv^*$ is independent of the chart; it defines a Hermitian metric $h$ on $\Ek$ for
which the frames of Definition~\ref{def:EK} are orthonormal. Consequently
$\|s(P)\|=\bigl(s(P)s(P)^*\bigr)^{1/2}$ is well defined and continuous for every
continuous section $s$.

For a divisor $D=\sum m_PP$ on $X$ we write, as usual,
\[
H^0(X,\Ek(D)):=\{\,U\ \text{meromorphic section of }\Ek:\ \Div U+D\ge 0\,\},
\]
where $\Div U:=\sum_P(\ord  U). P$ and $\ord_P U$ is computed in any local holomorphic frame.
We write $h^0=\dim_\C H^0$.

We summarise this as follows. 
Let $U$ be a meromorphic section of $\Ek$. We can view it as a meromorphic function on $X_0$ on the frame $e^{X_0}$. Let $x\in \Sigma_j$ and let $\pm$
denote the limits from $\{\pm\im z>0\}$. Then
\begin{equation}\label{eq:vbj}
U\big(P_1(x)\big)_+=U\big(P_1(x)\big)_-K_j,
\qquad
U\big(P_2(x)\big)_+=U\big(P_2(x)\big)_-K_j^{-1},
\end{equation}
whenever the limits exist. Moreover, for $x$ in an open cut $(a_k,b_k)$,
\begin{equation}
   P_1(x)_+=P_2(x)_-\quad\text{and}\quad P_2(x)_+=P_1(x)_-\ \text{ as points of }X, 
\end{equation}

so that $U(P_1(x))_+=U(P_2(x))_-$ and $U(P_2(x))_+=U(P_1(x))_-$.

The following proposition reflects the necessity of the condition of the transition matrices being unitary.

\begin{proposition}\label{prop:pairing}
For meromorphic sections $U,V$ of $\Ek$ define,  on $X_0$,
\[
\langle U,V\rangle_\tau(P):=U(P)\,\overline{V(\tau P)}^{\mathsf T}
\qquad P\in X_0\ .
\]
Then:
\begin{enumerate}
\item $\langle U,V\rangle_\tau$ extends to a meromorphic function on all of $X$, with
\[
\Div \big(\langle U,V\rangle_\tau\big)\ \ge\ \Div U+\tau_*(\Div V),\qquad
\tau_*(V):=\textstyle\sum_P \ord_P(V)\,\tau(P);
\]
\item on $\partial S$ one has $\langle U,V\rangle_\tau=UV^*$; in particular
$\langle U,U\rangle_\tau=\|U\|^2\ge0$ there, and it vanishes at a point of $\partial S$ iff
$U$ does.
\end{enumerate}
\end{proposition}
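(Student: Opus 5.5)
Part 1 is a local check: $\langle U,V\rangle_\tau$ is meromorphic on $X_0$, and its values computed in different frames agree, so it extends across each $\Gamma_j$. On $X_0$ I write $U$ and $V$ in the frame $e^{X_0}$. Since $\tau$ is antiholomorphic, $P\mapsto\overline{V(\tau P)}$ is meromorphic wherever $V$ is meromorphic near $\tau P$. Hence $\langle U,V\rangle_\tau$ is meromorphic on $X_0\cap\tau(X_0)=X_0$; the equality holds because each $\Gamma_j$, and therefore $X_0$, is $\tau$-invariant.

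Near $\Gamma_j$ I compute in the frame $e^{W_j}$ instead. There $U_{W_j}=U_{X_0}T$ and $V_{W_j}=V_{X_0}T'$, where $T=T_{X_0W_j}$ is evaluated at $P$ and $T'$ is evaluated at $\tau P$. The key point is that $\tau$ maps $W_j^+$ to itself and $W_j^-$ to itself (the labelling of $W_j^\pm$ was chosen for this). So $T(P)=T(\tau P)$, and this common value is either $I_r$ or $K_j$. Since $K_j$ is unitary,
\[
U_{W_j}(P)\,\overline{V_{W_j}(\tau P)}^{\mathsf T}=U_{X_0}(P)\,T\,T^{*}\,\overline{V_{X_0}(\tau P)}^{\mathsf T}=U_{X_0}(P)\,\overline{V_{X_0}(\tau P)}^{\mathsf T}.
\]
Therefore the expression $U_{W_j}(P)\overline{V_{W_j}(\tau P)}^{\mathsf T}$, which is meromorphic on all of $W_j$, agrees with $\langle U,V\rangle_\tau$ on $W_j\setminus\Gamma_j$. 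This gives the extension to all of $X$.

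For the divisor bound I work locally at a point $P$ in any frame. There $\ord_P$ of the product is at least $\ord_P U+\ord_P\big(\overline{V\circ\tau}\big)$. Precomposing with the antiholomorphic $\tau$ and then conjugating preserves vanishing orders, so $\ord_P\big(\overline{V\circ\tau}\big)=\ord_{\tau P}V$. This term is exactly the coefficient of $P$ in $\tau_*(\Div V)$.

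For part 2, recall that $\partial S=\pi^{-1}(E)=\operatorname{Fix}(\tau)$, so $\tau P=P$ there. Hence $\langle U,V\rangle_\tau(P)=U(P)V(P)^*$ in any frame, with the frame choice irrelevant by the unitarity computation above. Taking $V=U$ gives $\|U(P)\|^2$ in the Hermitian metric $h$, which is nonnegative and vanishes exactly when $U(P)=0$.

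The main obstacle is making sure $\tau$ preserves each of $W_j^\pm$ rather than swapping them. If it swapped them, the two frame factors would be $K_j$ and $I_r$, and the cancellation would fail. I would verify this from $\tau(P_1(z))=P_2(\bar z)$: a point of $X_1$ over $\im z>0$ is sent to a point of $X_2$ over $\im z<0$, and both lie in $W_j^+$ by construction. A second point of care is the branch points $b_{j-1}$ and $a_j$, which lie on $\Gamma_j\cap\partial S$. These are covered because $W_j$ is a neighbourhood of the whole of $\Gamma_j$ and the frame formula is valid throughout $W_j$.
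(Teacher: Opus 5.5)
Your proof is correct. It uses the same two facts as the paper's proof: $\tau$ maps each of $W_j^{+}$ and $W_j^{-}$ to itself, and $K_j$ is unitary. The packaging is different.

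The paper works only in the frame $e^{X_0}$. It computes the two one-sided boundary values of $\langle U,V\rangle_\tau$ on $\Gamma_j$ via \eqref{eq:vbj} and sees that $K_j$ and $K_j^{*}$ cancel. It then glues the two sides continuously across $\Gamma_j$ minus the branch points, and removes the isolated singularities at $b_{j-1},a_j$ by a boundedness argument.

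You instead use the second trivialisation. You write down $U_{W_j}(P)\,\overline{V_{W_j}(\tau P)}^{\mathsf T}$, which is meromorphic on the whole $\tau$-invariant set $W_j$ by construction. You then check that it agrees with the $X_0$-expression on the overlap $W_j^{+}\cup W_j^{-}$.

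What your route buys is that no gluing or removability step is needed. Points of $\Gamma_j$ where $U$ or $V$ has a pole are also handled automatically. This is not cosmetic here. The points $Q_j^{\pm}$ lie on $\Gamma_j$ and are exactly where the sections in Theorem~\ref{thm:vanishing} have poles. The paper's removability remark at the branch points is only stated for $U,V$ holomorphic there, and its continuity argument along $\Gamma_j$ does not address poles on the curve.

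Your chart-wise justification of the divisor bound, via $\ord_P\bigl(\overline{V\circ\tau}\bigr)=\ord_{\tau P}V$, also supplies a step the paper leaves implicit. The paper's formulation has the advantage of being phrased in the jump language of \eqref{eq:vbj}. That is the form in which sections are later used as solutions of RH Problem~\ref{def:rhpF}.
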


\begin{proof}
Observe that $P\mapsto \overline{V(\tau P)}$ is holomorphic. On $X_0$ the expression is
therefore meromorphic. For the extension across $\Gamma_j$ we compute the two boundary values
on, say, the sheet 1, using \eqref{eq:vbj} for $P$
approaching from the $W_j^+$ side, $\tau P$ approaches the same side, which for sheet~2 is
the ``lower'' one; hence, with all limits taken at the same point of $\Gamma_j$,
\[
U_+=U_-K_j,\qquad V(\tau\,\cdot)_+=V(\tau\,\cdot)_-\,K_j ,
\]
the second identity because on sheet~2 the $W_j^+$ side is the $-$ side, and
$V(P_2)_-=V(P_2)_+K_j$. Therefore
\[
U_+\overline{V(\tau\cdot)_+}^{\mathsf T}
=U_-K_j\,\overline{K_j}^{\mathsf T}\,\overline{V(\tau\cdot)_-}^{\mathsf T}
=U_-K_jK_j^{*}\,\overline{V(\tau\cdot)_-}^{\mathsf T}
=U_-\overline{V(\tau\cdot)_-}^{\mathsf T},
\]
using $K_jK_j^*=I_r$. The same computation applies on the sheet-2. Thus the two
one-sided extensions agree continuously across $\Gamma_j$ minus the two branch points. The isolated branch points are removable because the function
is locally bounded there whenever $U,V$ are holomorphic there; . Statement (2) holds because $\tau|_{\partial S}=\mathrm{id}$ as it is the set of fixed points.
\end{proof}

\subsection{The required divisor and its consequences}

The following Lemma is straightforward verification of the number of sign changes, it also appears on \cite[(2.4.76)]{BL14} thus we omit the proof.
\begin{lemma}\label{lem:AB}
    Recall $A_0$ and $B_0$ in \eqref{def:aobo}. for each
$j=1,\dots,g$ there is exactly one $\xi_j\in \Sigma_j$ and $A_0(z)-B_0(z)$ has a simple zero at $\xi_j$ and these are all the $g$ zeros of $A_0(z)-B_0(z)$.
\end{lemma}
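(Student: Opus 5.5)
The plan is to count sign changes of the polynomial $F(z):=A_0(z)-B_0(z)$ and compare with its degree. Both $A_0$ and $B_0$ are monic of degree $g+1$, so the leading terms cancel. The coefficient of $z^{g}$ in $F$ is
\[
-\sum_\ell a_\ell+\sum_\ell b_\ell=\sum_{\ell=0}^{g}(b_\ell-a_\ell)>0 .
\]
Hence $F$ is a real polynomial of degree exactly $g$, and it has at most $g$ zeros counted with multiplicity. It therefore suffices to produce at least one zero in each gap $\Sigma_j$, $j=1,\dots,g$.

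To locate the zeros, evaluate $F$ at the endpoints of $\Sigma_j=(b_{j-1},a_j)$.
\begin{itemize}
\item At $b_{j-1}$ we have $B_0(b_{j-1})=0$, so $F(b_{j-1})=A_0(b_{j-1})$. The factors $b_{j-1}-a_\ell$ are positive for $\ell\le j-1$ and negative for $\ell\ge j$. There are $g+1-j$ negative factors, so $\operatorname{sgn}F(b_{j-1})=(-1)^{g+1-j}$.
\item At $a_j$ we have $F(a_j)=-B_0(a_j)$. The factors $a_j-b_\ell$ are negative exactly for $\ell\ge j$, again $g+1-j$ of them. So $\operatorname{sgn}F(a_j)=-(-1)^{g+1-j}$.
\end{itemize}
Both values are nonzero because all $a$'s and $b$'s are distinct. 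Their signs are opposite, so the intermediate value theorem gives a zero $\xi_j\in\Sigma_j$.

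This yields $g$ distinct zeros $\xi_1,\dots,\xi_g$, one in each of the pairwise disjoint gaps. Since $\deg F=g$, these are all the zeros, each is simple, and there is exactly one in each $\Sigma_j$: a second zero in some gap, or a multiple zero, would push the count with multiplicity above $g$. The only step needing care is the sign bookkeeping at the two endpoints, together with checking that the degree is exactly $g$ (not smaller), which the positive $z^{g}$ coefficient guarantees. I expect no genuine obstacle.
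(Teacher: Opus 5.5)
Your proof is correct and is the sign-change count that the paper has in mind; the paper omits the argument as routine and cites a reference for it. Your degree computation (leading coefficient $\sum_\ell(b_\ell-a_\ell)>0$, consistent with the factorisation $A_0-B_0=L\prod_j(x-\xi_j)$ used later in the paper) and the endpoint sign bookkeeping at $b_{j-1}$ and $a_j$ both check out.
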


The divisor we need is the following.
\begin{definition}\label{def:D} Set
$Q_j^{+}:=\big(\xi_j,\;y(\xi_j)\big)$, $Q_j^{-}:=\big(\xi_j,\;-y(\xi_j)\big)$, and
\[
D^{+}:=\sum_{j=1}^{g}Q_j^{+},\qquad D^{-}:=\sum_{j=1}^{g}Q_j^{-},
\qquad \degree D^\pm=g .
\]
\end{definition}
We construct a meromorphic differential $\eta$, its divisor is  a direct verification.

\begin{definition}\label{def:eta}We define,
\begin{equation}\label{eq:eta}
\eta:=\frac{A_0-B_0}{y}\,dz .
\end{equation}
$\eta$ is a meromorphic $1$-form on $X$ with
\begin{equation}\label{div:eta}
\Div \eta =D^{+}+D^{-}-\infty_1-\infty_2 .
\end{equation}
In particular $\eta$ is holomorphic and nonvanishing on $E$.
\end{definition}

\begin{lemma}\label{lem:ietapos}
For every nonzero positively oriented tangent vector $v$ to $\partial S$,
\[
i\eta(v)>0 .
\]
\end{lemma}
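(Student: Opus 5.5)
The plan is a direct sign computation on each oval of $\partial S=\pi^{-1}(E)$, using the explicit boundary values \eqref{eq:y-plus} of $y$ and the sign pattern of $A_0$ and $B_0$ on the cuts. Positivity at the branch points will then follow by continuity, since by Definition~\ref{def:eta} the form $\eta$ is holomorphic and nonvanishing on $E$.

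\emph{Step 1 (orientation over a point of an open cut).} Fix $x\in(a_k,b_k)$. The oval of $\partial S$ over $[a_k,b_k]$ passes through the point $P_1(x)_+=P_2(x)_-$. Near this point the coordinate $z$ is a local chart of $X$: the half $\{\im z>0\}$ lies in $X_1\subset S$ and the half $\{\im z<0\}$ lies in $X_2$, so the boundary is the real segment. With $S$ kept on the left, the positive boundary orientation runs in the direction of increasing $x$, so $v=c\,\partial_x$ with $c>0$. In this chart $y$ takes the value $y_+(x)$. At the other point over $x$, namely $P_1(x)_-=P_2(x)_+$, the part of $S$ is the lower half-plane. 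Hence the orientation is $v=-c\,\partial_x$ and $y=y_-(x)=-y_+(x)$. The two sign changes cancel, so it suffices to treat the first point.

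\emph{Step 2 (sign computation).} At the first point,
\[
 i\eta(v)=c\,\frac{i\,(A_0(x)-B_0(x))}{i(-1)^{g-k}|A_0(x)B_0(x)|^{1/2}}
 =c\,(-1)^{g-k}\frac{A_0(x)-B_0(x)}{|A_0(x)B_0(x)|^{1/2}}.
\]
For $x\in(a_k,b_k)$ the factors $x-a_\ell$ are negative exactly for $\ell>k$, which gives $g-k$ negative factors. The factors $x-b_\ell$ are negative exactly for $\ell\ge k$, which gives $g-k+1$ negative factors. Therefore $\sgn A_0=(-1)^{g-k}$ and $\sgn B_0=(-1)^{g-k+1}$, so
\[
A_0(x)-B_0(x)=(-1)^{g-k}\bigl(|A_0(x)|+|B_0(x)|\bigr).
\]
Substituting, we get $i\eta(v)=c\,(|A_0|+|B_0|)/|A_0B_0|^{1/2}>0$.

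\emph{Step 3 (branch points).} This is the only delicate point, because $z$ is not a local coordinate at $e\in\mathcal E$. I would pick a smooth nonvanishing positively oriented tangent field $v$ along the whole oval. Then $i\eta(v)$ is continuous on the oval, because $\eta$ is holomorphic there. It is also nonzero everywhere on the oval: $\eta$ does not vanish on $E$, and the divisor \eqref{div:eta} is supported off $\pi^{-1}(E)$. By Step 2 it is real and positive on the dense complement of the two branch points. Continuity then gives $i\eta(v)\ge0$ at the branch points, and nonvanishing gives $i\eta(v)\neq0$, so $i\eta(v)>0$ there as well.

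Alternatively, one can compute directly in the local coordinate $w=(z-e)^{1/2}$. There $\eta=2(A_0-B_0)(e)\,(w/y)\,dw$, and $w/y$ has a finite nonzero limit, which makes the sign explicit. I would give the continuity argument as the main proof and the local-coordinate computation as a check.
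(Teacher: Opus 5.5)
Your proof is correct and is essentially the paper's argument: you evaluate $i\eta(v)$ at a point over an open cut using $y_+=i(-1)^{g-k}|A_0B_0|^{1/2}$ and the left-to-right boundary orientation, and you use continuity plus non-vanishing of $\eta$ on $\pi^{-1}(E)$ for the remaining points. The paper uses that continuity to reduce to one point per oval, whereas you check both points over each $x$ directly and need continuity only at the branch points. The one real difference is how you get $\sgn(A_0-B_0)=(-1)^{g-k}$ on $(a_k,b_k)$: the paper reads it off the factorisation $A_0-B_0=L\prod_j(x-\xi_j)$ from Lemma~\ref{lem:AB}, while your separate sign count for $A_0$ and $B_0$ is more elementary, avoids Lemma~\ref{lem:AB}, and also gives $|A_0-B_0|=|A_0|+|B_0|$ on the cuts.
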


\begin{proof}
By Definition \ref{def:eta} $v\mapsto i\eta(v)$ is a continuous, real, nowhere vanishing
function on the unit positively oriented tangent bundle of $\partial S$; each oval being
connected, its sign is constant on each oval. It therefore suffices to evaluate it at one
point of each oval.

Write $L:=\sum_{\ell=0}^{g}(b_\ell-a_\ell)$ for the total length of $E$

On $(a_k,b_k)$ Lemma~\ref{lem:AB} gives $A_0-B_0=L\prod_j(x-\xi_j)$  with $\xi_j<x$ for
$j\le k$ and $\xi_j>x$ for $j>k$, i.e.\ $g-k$ negative factors, so
$\sgn(A_0-B_0)=(-1)^{g-k}$. Also by \eqref{eq:y-plus}, 
$y_+=i(-1)^{g-k}|A_0B_0|^{1/2}$. Hence
\[
i\eta_+(x)=i\,\frac{(-1)^{g-k}|A_0(x)-B_0(x)|}{i(-1)^{g-k}|A_0(x)B_0(x)|^{1/2}}\,dx
=\frac{|A_0(x)-B_0(x)|}{|A_0(x)B_0(x)|^{1/2}}\,dx.
\]
The positive boundary orientation on an upper cut is left to right, i.e.\ $dx>0$, so $i\eta(v)>0$ there.
\end{proof}

\begin{remark}
The meromorphic function $f:=\dfrac{y-A_0}{y+A_0}$ on $X$ has divisor
\[
\Div f=D^{+}+\infty_1-D^{-}-\infty_2 .
\]

Consequently. 
\[
\Div (\eta/f)=2\big(D^--\infty_1\big),\qquad \Div(\eta f)=2\big(D^+-\infty_2\big).
\]

 Define the line bundles $\Delta_-:=\mathcal O(D^--\infty_1)$ and $\Delta_+:=\mathcal O(D^+-\infty_2)$. Both of degree
$g-1$. Then
\[
\Delta_-\otimes\Delta_+\cong \Kx,\qquad \Delta_-^{\otimes2}\cong\Kx\cong\Delta_+^{\otimes2},
\qquad \Delta_+\cong\Delta_- .
\]
\end{remark}

The following proposition is essential for the construction of the global parametrix as we shall see.
\begin{theorem}\label{thm:vanishing}
For every $K_1,\dots,K_g\in\Ur$,
\[
H^0\big(X,\Ek(D^--\infty_1)\big)=0
\qquad\text{and}\qquad
H^0\big(X,\Ek(D^+-\infty_2)\big)=0 .
\]
Equivalently: a meromorphic section $U$ of $\Ek$ whose only poles are at most simple poles at
the points $Q_1^-,\dots,Q_g^-$ and which vanishes at $\infty_1$ is identically zero; and likewise
with $Q_j^-\leftrightarrow Q_j^+$, $\infty_1\leftrightarrow \infty_2$.
\end{theorem}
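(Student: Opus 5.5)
Let $U\in H^0(X,\Ek(D^--\infty_1))$; we will show $U\equiv0$ by a positivity argument built from the pairing of Proposition~\ref{prop:pairing} and the differential $\eta$ of Definition~\ref{def:eta}. Take $V=U$ and form $F:=\langle U,U\rangle_\tau$, which by Proposition~\ref{prop:pairing}(1) is meromorphic on $X$ with
\[
\Div F\ \ge\ \Div U+\tau_*(\Div U)\ \ge\ -D^-+\infty_1-\tau_*(D^-)+\tau(\infty_1).
\]
Since $\tau(Q_j^-)=(\xi_j,-\overline{(-y(\xi_j))})=Q_j^+$ ($\xi_j$ real, $y(\xi_j)$ real on the gap), and $\tau(\infty_1)=\infty_2$, we get $\Div F\ge -D^--D^++\infty_1+\infty_2$. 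Hence the meromorphic $1$-form $\omega:=F\,\eta$ satisfies, by \eqref{div:eta},
\[
\Div\omega\ \ge\ -D^--D^++\infty_1+\infty_2+D^++D^--\infty_1-\infty_2=0,
\]
so $\omega$ is a holomorphic differential on $X$; this is exactly why the divisor $D^\pm$ and the form $\eta$ were chosen.

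Next we integrate $i\omega$ over $\partial S$. On $\partial S$, Proposition~\ref{prop:pairing}(2) gives $F=\|U\|^2\ge0$, and Lemma~\ref{lem:ietapos} gives $i\eta(v)>0$ on positively oriented tangent vectors; hence $\int_{\partial S} i\omega=\int_{\partial S}\|U\|^2\,i\eta\ge0$, with equality only if $U\equiv0$ on $\partial S$, hence $U\equiv 0$ by the identity theorem (a nonzero section cannot vanish on a real curve). On the other hand, $\omega$ is holomorphic on $S$ (including near $\infty_1$, and across the $\Gamma_j\cap S$ since $F$ is single valued there), so Stokes/Cauchy on the compact bordered surface $S$ gives $\int_{\partial S}\omega=\int_S d\omega=0$. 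Thus $\int_{\partial S}\|U\|^2 i\eta=0$, forcing $U=0$.

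The second statement follows by the same argument with $D^+,\infty_2$ (or by applying $\iota$, which swaps the roles and conjugates the bundle data $K_j\mapsto K_j^{-1}$, still unitary), using $\tau(Q_j^+)=Q_j^-$, $\tau(\infty_2)=\infty_1$; integration is again over $\partial S$, now with $\omega$ holomorphic on $S$ because the poles of $F\eta$ at $\infty_2$ cancel as before. The main point to check carefully is the orientation/sign in Lemma~\ref{lem:ietapos} on every oval (only upper-cut points were examined, but constancy of sign on each connected oval covers this) and that $\omega$ is genuinely holomorphic at the branch points $b_{j-1},a_j$ on $\Gamma_j$, where we use the removability from Proposition~\ref{prop:pairing}; the identification $\tau(Q_j^-)=Q_j^+$ is the step I would verify first, since the whole cancellation hinges on it.
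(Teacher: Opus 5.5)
Your proposal is correct and is essentially the paper's proof. You form $\omega=\langle U,U\rangle_\tau\,\eta$, use $\tau_*D^-=D^+$, $\tau(\infty_1)=\infty_2$ and $\Div\eta=D^++D^--\infty_1-\infty_2$ to see that $\omega$ is a holomorphic differential on $X$, and then combine Stokes on $S$ with $\langle U,U\rangle_\tau=\|U\|^2\ge0$ and $i\eta>0$ on $\partial S$ to force $U|_{\partial S}=0$, hence $U\equiv0$. Your $\iota$-pullback for the second space is a valid shortcut for the paper's ``repeat the same argument'': $\iota$ swaps $W_j^\pm$, so $\Ek$ becomes the bundle with unitary data $K_j^{-1}$.
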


\begin{proof}
Let $U\in H^0(X,\Ek(D^--\infty_1))$, i.e.\ $(U)\ge \infty_1-D^-$. Consider the $1$-form
\begin{equation}\label{eq:omega}
\omega:=\langle U,U\rangle_\tau\,\eta
=U(P)\,\overline{U(\tau P)}^{\mathsf T}\,\eta(P).
\end{equation}

  Proposition~\ref{prop:pairing} gives $\langle U,U\rangle_\tau$ is meromorphic. Using the divisor of $\eta$ from \eqref{div:eta} we find
\[
\Div \omega \ \ge\ \Div U+\tau_*\Div(U)+\Div \eta
\ \ge\ \big(\infty_1-D^-\big)+\big(\infty_2-D^+\big)+\big(D^++D^--\infty_1-\infty_2\big)=0 .
\]
Thus $\omega\in H^0(X,\Kx)$. 

 A holomorphic $1$-form is closed, thus $d\omega=0$, and $\omega$ is smooth
on the compact bordered surface $S$. Hence
\begin{equation}
0=i\int_S d\omega=i\int_{\partial S}\omega=\int_{\partial S}\|U\|^2\,i\eta ,
\end{equation}
using Proposition~\ref{prop:pairing}(2) on $\partial S$, where $\tau P=P$.

 The integrand is continuous on $\partial S$ ($U$ has no poles
there, and $\|U\|^2$ is well defined and nonnegative), and
$i\eta>0$ by Lemma~\ref{lem:ietapos}. Hence $U\equiv0$ on $\partial S$.  The identity theorem gives $U\equiv0$ on $X$.

For the second space $H^0\big(X,\Ek(D^+-\infty_2)\big)$,  we repeat the same argument with $D^-$ and $\infty_1$ replaced by
$D^+$ and $\infty_2$.
\end{proof}
We will need the Riemann--Roch theorem for holomorphic vector bundles in our setting. We recall the notions again for convenience of reading. For a holomorphic vector bundle $V$ over $X$, we denote by $H^0(X,V)$ the vector space of its global holomorphic sections and write
\[
h^0(X,V):=\dim H^0(X,V).
\]
When the underlying surface $X$ is clear from the context, we simply write $h^0(V)$.
\begin{theorem}[Riemann--Roch, {\cite[\S4]{Gunning67}}]\label{thm:RR}
Let $V$ be a holomorphic vector bundle of rank $r$ and degree $d$ on a compact Riemann
surface of genus $g$. Then $h^0(V)\geq d+r(1-g)$.
\end{theorem}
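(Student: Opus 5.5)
We would derive the inequality from the full Riemann--Roch identity
\[
 \chi(V):=h^0(X,V)-h^1(X,V)=d+r(1-g),
\]
where $h^1(X,V)=\dim H^1(X,\mathcal O(V))$ is finite. Since $h^1\ge 0$, this gives $h^0(V)\ge\chi(V)=d+r(1-g)$ immediately. The plan is to prove the identity by induction on the rank $r$. Two inputs drive the induction: additivity of $\chi$ and of the degree in short exact sequences, and the classical Riemann--Roch theorem for line bundles as the base case.

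\emph{Base case $r=1$.} Write $L=\mathcal O(D)$ for a divisor $D$; this uses the fact that every line bundle has a nonzero meromorphic section, which we justify below. For a point $P$, the skyscraper sequence
\[
 0\to\mathcal O(D)\to\mathcal O(D+P)\to\mathbb C_P\to0
\]
and its long exact cohomology sequence give $\chi(\mathcal O(D+P))=\chi(\mathcal O(D))+1$. The degree also rises by $1$. For $D=0$ we have $\chi(\mathcal O)=1-g$, since $h^0=1$ and $h^1=g$ by Dolbeault/Hodge. Adding and removing points then yields $\chi(L)=\deg L+1-g$.

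\emph{Inductive step.} Assume $V$ admits a nonzero meromorphic section $s$. Clearing its poles and zeros, $s$ spans a line subbundle $L\subset V$: locally $L$ is generated by $s$ divided by a local equation of its divisor, and the saturation is automatic on a curve. The quotient $V/L$ is then a holomorphic bundle of rank $r-1$. From
\[
 0\to L\to V\to V/L\to0
\]
the long exact sequence (a finite sequence, because all cohomology groups are finite dimensional) gives $\chi(V)=\chi(L)+\chi(V/L)$. Taking determinants gives $\det V\cong L\otimes\det(V/L)$, so $\deg V=\deg L+\deg(V/L)$. Applying the induction hypothesis to $V/L$ and the base case to $L$:
\[
 \chi(V)=\bigl(\deg L+1-g\bigr)+\bigl(\deg(V/L)+(r-1)(1-g)\bigr)=d+r(1-g).
\]

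\emph{Main obstacle.} The step we expect to be hardest is the existence of a nonzero meromorphic section of an arbitrary holomorphic bundle $V$, together with finiteness of $h^1(V)$. We would first prove $\dim H^1(X,V)<\infty$ by the standard Cartan--Serre compactness argument: compare two Leray covers with shrunken discs and use Montel's theorem to get a compact operator. Then fix a point $P$ and a local frame of $V$ on a disc around it. The principal parts of order at most $n$ at $P$ form a space $\mathbb C^{rn}$, and Mittag-Leffler gives a linear obstruction map $\mathbb C^{rn}\to H^1(X,V)$. Once $rn>h^1(V)$ this map has a nontrivial kernel. A nonzero principal part in the kernel is realized by a global meromorphic section with a pole at $P$, which is in particular nonzero. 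This completes the plan, and hence the inequality of Theorem~\ref{thm:RR}. In our application $V=\Ek(D^\mp-\infty_{1,2})$ has $d=r(g-1)$, so the bound reads $h^0\ge0$ and is sharp by Theorem~\ref{thm:vanishing}.
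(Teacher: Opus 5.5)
The paper gives no proof of this statement. It quotes it as a black box from \cite{Gunning67} and uses it only once, in Proposition~\ref{prop:ev}. Your proposal supplies the standard textbook argument, and it is correct:
\begin{itemize}
\item finiteness of $H^1$ by the Cartan--Serre compactness argument;
\item a nonzero meromorphic section from the Mittag-Leffler count of principal parts against $h^1$;
\item saturation of that section to a line subbundle;
\item induction on rank, using additivity of $\chi$ and of the degree, with the line-bundle case as base.
\end{itemize}
Compared with the citation, your route is self-contained and proves the identity $\chi(V)=d+r(1-g)$, not just the inequality. Combined with Proposition~\ref{prop:ev}, the identity would also give $h^1(\Ek(D^\pm))=0$. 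The paper never needs this, so the citation buys brevity at no cost to the argument.

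Two small corrections.

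\textbf{Termination of the long exact sequence.} The long exact sequence for $0\to L\to V\to V/L\to 0$ stops at $H^1(V/L)\to 0$ because $H^2(X,L)=0$ on a Riemann surface. This follows, for example, from the Dolbeault resolution, since there are no $(0,2)$-forms. It does not follow from finite-dimensionality. Finite-dimensionality is what makes the alternating sum meaningful, so you need both facts, for different reasons.

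\textbf{Your closing remark misidentifies how the theorem is used.} The bundles $\Ek(D^-\!-\infty_1)$ and $\Ek(D^+\!-\infty_2)$ have degree $r(g-1)$, where the bound is the vacuous $h^0\ge 0$. The paper instead applies the theorem to $V_\pm=\Ek(D^\pm)$, of degree $rg$. There it gives the nontrivial lower bound $h^0(V_\pm)\ge r$. Theorem~\ref{thm:vanishing} supplies the matching upper bound through injectivity of $\ev_{\infty_1}$ and $\ev_{\infty_2}$. That combination is what makes the evaluation maps isomorphisms.
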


Since $\infty_1,\infty_2\in X_0$ and $D_\pm$ is supported on $\bigcup_j\Gamma_j$, a
section of $V_\pm$ is holomorphic at both points, and we may evaluate it there. We write
\[
\ev_{\infty_1}(U):=U(\infty_1)\in\C^{1\times r},
\]
the components being taken in the frame $e^{X_0}$ of Definition~\ref{def:EK}, and
similarly for $\ev_{\infty_2}$. These are linear maps, and the frame is the same one in
which the normalisations of RH Problem~\ref{def:rhpF} are expressed.
\begin{proposition}\label{prop:ev}
Let $V_-:=\Ek(D^-)$ and $V_+:=\Ek(D^+)$. Then
\[
\ev_{\infty_1}:H^0(X,V_-)\xrightarrow{\ \sim\ }\C^{1\times r},
\qquad
\ev_{\infty_2}:H^0(X,V_+)\xrightarrow{\ \sim\ }\C^{1\times r}
\]
are isomorphisms of complex vector spaces. In particular $h^0(V_\pm)=r$.
\end{proposition}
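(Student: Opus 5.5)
The plan is to combine injectivity, which follows from Theorem~\ref{thm:vanishing}, with a dimension count from Riemann--Roch (Theorem~\ref{thm:RR}). We treat $V_-$ and $\ev_{\infty_1}$; the other case is identical with $D^-,\infty_1$ replaced by $D^+,\infty_2$.

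\emph{Injectivity.} Let $U\in H^0(X,V_-)$ with $\ev_{\infty_1}(U)=0$. Then $U$ has at most simple poles at the $Q_j^-$ and vanishes at $\infty_1$, so $\Div U\ge \infty_1-D^-$. Hence $U\in H^0\bigl(X,\Ek(D^--\infty_1)\bigr)$, and this space is zero by Theorem~\ref{thm:vanishing}. So $\ker\ev_{\infty_1}=0$ and $h^0(V_-)\le r$. Before this step we check that $\infty_1\notin\operatorname{supp}D^-$ (true, since $\xi_j$ is finite). Thus evaluation at $\infty_1$ is well defined in the frame $e^{X_0}$, and the kernel really consists of sections vanishing to first order at $\infty_1$.

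\emph{Surjectivity via Riemann--Roch.} The bundle $\Ek$ is flat unitary of rank $r$ and degree $0$ by Definition~\ref{def:EK}. Twisting by the effective divisor $D^-$ of degree $g$ gives $V_-=\Ek\otimes\mathcal O(D^-)$, with $\deg V_-=0+rg=rg$. Theorem~\ref{thm:RR} then gives
\[
h^0(V_-)\ge rg+r(1-g)=r .
\]
Combined with the injectivity bound $h^0(V_-)\le r$, this yields $h^0(V_-)=r$. An injective linear map between spaces of equal finite dimension $r$ is an isomorphism, so $\ev_{\infty_1}$ is onto $\C^{1\times r}$.

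\emph{Main obstacle.} The only delicate point is the degree computation for the twisted bundle, namely $\deg \Ek(D)=\deg\Ek+r\deg D$. This holds because $\det(\Ek(D))=\det\Ek\otimes\mathcal O(D)^{\otimes r}$. We must also make sure that the Riemann--Roch inequality is applied in the form $h^0(V)\ge\deg V+r(1-g)$, which drops the $h^1$ term. No estimate on $h^1$ is needed, because the upper bound comes from the vanishing theorem. Everything analytic was already done in Theorem~\ref{thm:vanishing}, so the proposition reduces to this linear algebra.
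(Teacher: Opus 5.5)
Your proposal is correct and follows essentially the same route as the paper's proof. You get injectivity of $\ev_{\infty_1}$ from Theorem~\ref{thm:vanishing}, then $\degree V_-=rg$ and the Riemann--Roch inequality give $h^0(V_-)\ge r$, so the injective map is bijective. Your extra checks ($\infty_1\notin\operatorname{supp}D^-$, and the degree formula via $\det$) just make explicit what the paper uses tacitly.
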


\begin{proof}
$\ker \ev_{\infty_1}=H^0(X,\Ek(D^--\infty_1))=0$ by Theorem~\ref{thm:vanishing}, so $\ev_{\infty_1}$ is
injective and $h^0(V_-)\le r$. On the other hand $\degree V_-=\degree \Ek+r\degree D^-=rg$  so Theorem~\ref{thm:RR} gives
\[
h^0(V_-)\geq rg+r(1-g)=r
\]
Hence $h^0(V_-)=r$, and an injective linear map between spaces of equal finite
dimension is bijective. The case of $V_+$ is identical.
\end{proof}

\section{The global parametrix}\label{sec:gp}
In this section we show how to construct the solution to model problem \ref{def:rhp} with our previously defined notions of vector bundles.
\subsection{Ansatz}
\begin{equation}\label{eq:beta}
\beta(z):=\prod_{\nu=0}^{g}\Bigl(\frac{z-b_\nu}{z-a_\nu}\Bigr)^{1/4},
\end{equation}
each factor being the principal fourth root; this is well defined because
$z\mapsto(z-b_\nu)/(z-a_\nu)$ maps $\Ch\setminus[a_\nu,b_\nu]$ onto
$\C\setminus(-\infty,0]$ and sends $\infty$ to $1$. Thus $\beta$ is holomorphic and
nonvanishing on $\Ch\setminus E$, with $\beta(\infty)=1$; in particular $\beta$ is
holomorphic across the gaps. Put
\begin{equation}\label{eq:cs}
\cc:=\frac{\beta+\beta^{-1}}{2},\qquad \ss:=\frac{\beta-\beta^{-1}}{2i},
\end{equation}
so that $\cc\pm i\ss=\beta^{\pm1}$ and $\cc^{2}+\ss^{2}=1$.

These functions, and the jump relations of Lemma~\ref{lem:beta}(ii), are classical;
see \cite[\S2.4]{BL14}.

\begin{lemma}\label{lem:beta}
On $\Ch\setminus E$ one has:
\begin{enumerate}[label=(\roman*)]
\item $\beta^{4}=B_0/A_0$ and $\beta^{2}=y/A_0$;
\item $\beta_+=i\beta_-$ on each cut $(a_k,b_k)$, hence there
$\cc_+=-\ss_-$ and $\ss_+=\cc_-$;
\item $\cc$ is nonvanishing, while $\ss$ vanishes exactly at $\xi_1,\dots,\xi_g$ and
at $\infty$, each zero being simple;
\item as $z\to\infty$,
\[
\beta=1-\frac{L}{4z}+O(z^{-2}),\qquad \cc=1+O(z^{-2}),\qquad
\ss=\frac{iL}{4z}+O(z^{-2}).
\]
\end{enumerate}
\end{lemma}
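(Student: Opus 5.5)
We will prove the four items in order, working directly from the definition \eqref{eq:beta} as a product of principal fourth roots.

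For (i), raising \eqref{eq:beta} to the fourth power gives $\beta^4=\prod_\nu (z-b_\nu)/(z-a_\nu)=B_0/A_0$ at once. For $\beta^2$ we observe that $(\beta^2A_0)^2=A_0^2\,B_0/A_0=A_0B_0=y^2$. Hence $\beta^2A_0=\pm y$ on the connected set $\Ch\setminus E$, with a single constant sign since both sides are holomorphic there. At $z=x>b_g$ every factor of $\beta$ is positive and $A_0(x)>0$, while $y(x)>0$ by our choice of branch, so the sign is $+$.

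For (ii), fix $x\in(a_k,b_k)$. Only the factor with $\nu=k$ has a jump there, since for $\nu\neq k$ the ratio $(x-b_\nu)/(x-a_\nu)$ is positive. The ratio $(z-b_k)/(z-a_k)$ approaches the negative real axis from above as $z\to x$ from $\im z>0$, because its argument equals $\arg(z-b_k)-\arg(z-a_k)\to\pi$. Thus the principal fourth root has argument $\to\pi/4$ from above and $\to-\pi/4$ from below, giving $\beta_+=e^{i\pi/2}\beta_-=i\beta_-$. Substituting into \eqref{eq:cs}:
\[
\cc_+=\tfrac12\bigl(i\beta_-+(i\beta_-)^{-1}\bigr)=\tfrac{i}{2}(\beta_--\beta_-^{-1})=-\ss_-,
\]
and similarly $\ss_+=\cc_-$.

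For (iii), $\cc=0$ means $\beta^2=-1$, and $\ss=0$ means $\beta^2=1$. Squaring each condition gives $\beta^4=1$, that is $B_0=A_0$. We separate the two roots using (i): $\beta^2=\pm1$ is equivalent to $y=\pm A_0$. At a zero $\xi_j$ of $A_0-B_0$ from Lemma~\ref{lem:AB}, on the first sheet $y(\xi_j)=\pm A_0(\xi_j)$, and the sign must be determined. This sign determination is the main obstacle. We will settle it with a continuity argument: $\beta$ is real and positive on each gap and on $(b_g,\infty)$, because each factor $(x-b_\nu)/(x-a_\nu)$ is positive there and so its principal fourth root is positive. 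Hence $\beta^2>0$, and at $\xi_j$ we get $\beta^2=1$, so $\ss(\xi_j)=0$ and $\cc(\xi_j)=1\neq0$.

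The point $\infty$ is a zero of $\ss$ by (iv). No zeros of $\cc$ occur at all, since $\beta^4=1$ forces $z\in\{\xi_j\}\cup\{\infty\}$ (Lemma~\ref{lem:AB} gives all finite zeros, and $\deg(A_0-B_0)=g$), and at these points $\beta^2=1$. Simplicity of the zeros comes from differentiating $\beta^4=B_0/A_0$ at $\xi_j$: the simple zero of $B_0-A_0$ gives $(\beta^4)'\neq0$, hence $\beta'\neq0$ and $\ss'=\beta'(1+\beta^{-2})/(2i)\neq0$. At $\infty$, simplicity follows from (iv).

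For (iv), we expand each factor as
\[
\Bigl(\frac{1-b_\nu/z}{1-a_\nu/z}\Bigr)^{1/4}=1-\frac{b_\nu-a_\nu}{4z}+O(z^{-2})
\]
and multiply the factors to get $\beta=1-L/(4z)+O(z^{-2})$. Then $\beta^{-1}=1+L/(4z)+O(z^{-2})$, and \eqref{eq:cs} gives $\cc=1+O(z^{-2})$ and $\ss=\tfrac{1}{2i}\bigl(-\tfrac{L}{2z}\bigr)+O(z^{-2})=\tfrac{iL}{4z}+O(z^{-2})$.
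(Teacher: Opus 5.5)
Your proof is correct and complete. The sign at $\xi_j$ is settled by positivity of the principal roots on the gaps, so the detour through $y=\pm A_0$ is not actually needed, and simplicity follows from $(\beta^4)'(\xi_j)=(B_0-A_0)'(\xi_j)/A_0(\xi_j)\neq0$. The paper gives no proof of its own and only cites these facts as classical (\cite{BL14}, \S2.4); your direct verification from \eqref{eq:beta} is the standard computation it defers to.
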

By abuse of notation we denote $\cc(z)=\cc(z)I_r$ and $\ss(z)=\ss(z)I_r$ the scalar matrices.

\begin{definition}\label{def:N}
For $z\in\C\setminus[a_0,b_{g}]$ put
\begin{equation}\label{eq:N}
N(z):=\begin{pmatrix}\cc(z)F_1(z) & \ss(z)F_2(z)\\[2pt]
-\ss(z)G_1(z) & \cc(z)G_2(z)\end{pmatrix},
\end{equation}
where the entries are $r\times r$ blocks.

\end{definition}

Using Lemma \ref{lem:beta} if $N$ satisfies RH problem \ref{def:rhp} then we can write the RH problem for $F_1(z), F_2(z)$ and $G_1(z), G_2(z)$.
We will see we can not solve $F$, $G$ problem without poles. However using Lemma \ref{lem:beta}(iii) we see $\ss$ has zero at $\xi_j$. Appealing to the ansatz \eqref{eq:N} we can then allow simple poles of $F_2$ and $G_1$ at $\xi_j$, and in the next proposition we will see this then guarantees a solution.

\subsection{Solution using sections of the vector bundle \texorpdfstring{$E_K(D^\pm)$}{EK(D±)}}
\begin{rhp}\label{def:rhpF}
Let $K_1,\dots,K_g\in\Ur$. We seek $H_1,H_2:\Omega_0\to\C^{r\times r}$ such that:
\begin{enumerate}
\item[(H0)] $H_1,H_2$ are holomorphic on $\Omega_0=\Ch\setminus[a_0,b_{g}]$;
\item[(H1)] on each cut $(a_k,b_k)$, $k=0,\dots,g$,
\[
H_{1+}=H_{2-},\qquad H_{2+}=H_{1-};
\]
\item[(H2)] on each gap $(b_{j-1},a_{j})$, $j=1,\dots,g$,
\[
H_{1+}=H_{1-}K_j,\qquad H_{2+}=H_{2-}K_j^{-1};
\]
\item[(H3)] $H_1,H_2$ are bounded near each branch point $a_k,b_k$.
\end{enumerate}
We impose one of two normalisations and each normalisation with a corresponding solution:
\begin{enumerate}
\item[(F)] $H_1(\infty)=I_r$, $H_2$ bounded at $\infty$; $H_1$ bounded at each $\xi_j$
and $H_2(z)=O\bigl(|z-\xi_j|^{-1}\bigr)$ as $z\to\xi_j$. A solution is written
$(F_1,F_2)$.
\item[(G)] $H_2(\infty)=I_r$, $H_1$ bounded at $\infty$; $H_2$ bounded at each $\xi_j$
and $H_1(z)=O\bigl(|z-\xi_j|^{-1}\bigr)$ as $z\to\xi_j$. A solution is written
$(G_1,G_2)$.
\end{enumerate}
\end{rhp}

\begin{lemma}\label{lem:FGsections}
RH problem~\ref{def:rhpF} with normalisation (F) has a unique solution, and likewise with
(G).
\end{lemma}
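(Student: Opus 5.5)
The plan is to identify solutions of RH Problem~\ref{def:rhpF} with (matrices of) meromorphic sections of $\Ek$, and then apply Proposition~\ref{prop:ev}. The dictionary is as follows. Given a pair $(H_1,H_2)$ of $r\times r$ functions on $\Omega_0$, I define a row-matrix-valued function $U$ on $X_0$ in the frame $e^{X_0}$:
\[
U(P_1(z)):=H_1(z),\qquad U(P_2(z)):=H_2(z),\qquad z\in\Omega_0 .
\]
Each row of $U$ is then a candidate section of $\Ek$.

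First I check that the jump conditions are exactly the gluing rules of the bundle.
\begin{itemize}
\item Condition (H1), $H_{1+}=H_{2-}$ and $H_{2+}=H_{1-}$ on the cuts, says precisely that $U$ is continuous across $\pi^{-1}(E)$ minus the branch points, since $P_1(x)_+=P_2(x)_-$ there. By Morera's theorem $U$ is then holomorphic across the open cuts.
\item Condition (H2) coincides with \eqref{eq:vbj}. So in the frame $e^{W_j}$ (multiplying by $K_j^{-1}$ on $W_j^-$), $U$ has no jump across the two arcs of $\Gamma_j$ over the gap, and again extends holomorphically there.
\item At the branch points $a_k,b_k$, condition (H3) gives boundedness in the local coordinate $\sqrt{z-e}$, so these isolated singularities are removable.
\item At $\xi_j$, normalisation (F) allows a simple pole of $H_2$ only, i.e.\ a pole of $U$ at $Q_j^-=P_2(\xi_j)$. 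One point needs care here: $\xi_j$ lies on the gap, which is the jump curve. The growth bound $O(|z-\xi_j|^{-1})$ holds on both sides, and after the frame change to $e^{W_j}$ the function is meromorphic near $Q_j^-$ with at most a simple pole.
\end{itemize}
Hence each row of $U$ lies in $H^0(X,\Ek(D^-))=H^0(X,V_-)$, and the condition $H_1(\infty)=I_r$ reads $\ev_{\infty_1}(U)=I_r$ row by row. The correspondence also runs in reverse. A section in $H^0(X,V_-)$, read in the frame $e^{X_0}$ on the two sheets, gives a pair $(H_1,H_2)$ satisfying (H0)–(H3) and the pole and boundedness requirements of (F). Boundedness at $\infty$ of $H_2$ holds because $\infty_2\in X_0$ is not a pole.

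Existence and uniqueness then follow at once from Proposition~\ref{prop:ev}. Since $\ev_{\infty_1}:H^0(X,V_-)\to\C^{1\times r}$ is an isomorphism, for each standard basis row $e_k$ there is a unique section $U_k$ with $U_k(\infty_1)=e_k$. Stacking these gives the unique $(F_1,F_2)$.

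For uniqueness one can also argue directly. The difference of two solutions has rows in $\ker\ev_{\infty_1}=H^0(X,\Ek(D^--\infty_1))$, which vanishes by Theorem~\ref{thm:vanishing}.

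Normalisation (G) is handled in the same way, with $V_+=\Ek(D^+)$ and $\ev_{\infty_2}$. The poles now sit in $H_1$ at $Q_j^+=P_1(\xi_j)$, and the condition $H_2(\infty)=I_r$ is evaluation at $\infty_2$.

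The main obstacle I expect is the regularity bookkeeping in the dictionary, in particular:
\begin{itemize}
\item checking that the sheet and side conventions (which of $W_j^\pm$ corresponds to $\pm\im z$ on each sheet) match \eqref{eq:vbj} exactly, so that $H_2$'s jump $K_j^{-1}$ is the correct one;
\item checking that bounded or $O(|z-\xi_j|^{-1})$ boundary behaviour near points of $\Gamma_j$ yields genuine meromorphy at $Q_j^\pm$ and at the branch points.
\end{itemize}
I would settle the second item with a Morera-plus-Riemann-removable-singularity argument in the $e^{W_j}$ frame, applied to $(z-\xi_j)U$.
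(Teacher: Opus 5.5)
Your proposal is correct and follows the paper's proof. Both use the same dictionary $U(P_k(z))=H_k(z)$ to identify solutions with $r$-tuples of sections of $\Ek(D^\mp)$, and then invoke Proposition~\ref{prop:ev} for the normalisation at $\infty_1$ (resp.\ $\infty_2$); your Morera/removable-singularity argument supplies the converse direction that the paper only asserts.

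One small convention fix: since $U_{W_j}=U_{X_0}T_{X_0W_j}$, the components are multiplied by $K_j$ (not $K_j^{-1}$) on $W_j^-$. It is exactly this that turns \eqref{eq:vbj} into continuity across $\Gamma_j$ on both sheets.
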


\begin{proof}
We treat the normalisation (F); the case (G) is identical, with the two sheets, the
divisors $D_\mp$, and the points $\infty_{1,2}$ interchanged throughout. Sections are
always written in the frames of Definition~\ref{def:EK}.

 Let $U\in H^0\bigl(X,\Ek(D_-)\bigr)$. As $D_-$ is
supported on $\bigcup_j\Gamma_j$, the section $U$ is holomorphic on $X_0$, and we may set
\begin{equation}\label{eq:H-from-U}
H_1(z):=U\bigl(P_1(z)\bigr),\qquad H_2(z):=U\bigl(P_2(z)\bigr),\qquad z\in\Omega_0 .
\end{equation}
Condition (H0) is then immediate. For (H1), let $x\in(a_k,b_k)$.
 The boundary values $P_1(x)_+$ and $P_2(x)_-$ are the same point
of $X$, as are $P_2(x)_+$ and $P_1(x)_-$; both lie in $X_0$, where $U$ is single-valued.
Hence
\[
H_{1+}(x)=U\bigl(P_1(x)_+\bigr)=U\bigl(P_2(x)_-\bigr)=H_{2-}(x),
\]
and likewise $H_{2+}=H_{1-}$. 

Condition (H2) is exactly
\eqref{eq:vbj}.

For (H3), a branch point $e\in\{a_k,b_k\}$, $U_{W_j}$ is holomorphic in a local frame with  $w=(z-e)^{1/2}$
 a local coordinate there, so $U$ is holomorphic, in particular bounded, at $e$.

It remains to study the poles. Near $\Gamma_j$ we pass to the frame $e^{W_j}$, in which
the components are $U_{W_j}=U_{X_0}T_{X_0W_j}$. By hypothesis $U_{W_j}$ is holomorphic at
$Q_j^+=P_1(\xi_j)$ and has at most a simple pole at $Q_j^-=P_2(\xi_j)$. Since
$T_{X_0W_j}$ is unitary and constant, $H_1$ stays bounded as $z\to\xi_j$ while
$H_2(z)=O\bigl(|z-\xi_j|^{-1}\bigr)$, which is the blow up prescribed in (F).

Conversely, let $(H_1,H_2)$ be a row which satisfy (H0)--(H3) and the blow up conditions of (F),
and defining $U$ on $\pi^{-1}(\Omega_0)$ by \eqref{eq:H-from-U} we find that $U\in H^0\bigl(X,\Ek(D_-)\bigr)$.

 A solution $(F_1,F_2)$ is an $r\times 2r$ matrix, that is, a choice of
$r$ sections $U^{(1)},\dots,U^{(r)}\in H^0\bigl(X,\Ek(D_-)\bigr)$ taken as the rows of
$\bigl(F_1\ \ F_2\bigr)$ and stacked in columns. The normalisation $F_1(\infty)=I_r$ reads
\[
\ev_{\infty_1}\bigl(U^{(i)}\bigr)=\varepsilon_i,\qquad i=1,\dots,r,
\]
$\varepsilon_i$ being the $i$-th standard row vector. By Proposition~\ref{prop:ev} the
map $\ev_{\infty_1}$ is an isomorphism onto $\C^{1\times r}$, so each $U^{(i)}$ exists and
is unique. This proves the lemma.
\end{proof}

\begin{proposition}\label{prop:Nsolves}
With $F_k,G_k$ as in Lemma~\ref{lem:FGsections}, the matrix $N$ of
Definition~\ref{def:N} solves RH Problem~\ref{def:rhp}.
\end{proposition}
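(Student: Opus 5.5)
We verify (N0)--(N4) directly from Definition~\ref{def:N}, combining the jumps of $\cc,\ss$ in Lemma~\ref{lem:beta} with the conditions (H0)--(H3) satisfied by $(F_1,F_2)$ and $(G_1,G_2)$ from Lemma~\ref{lem:FGsections}.

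\textbf{Holomorphy (N0).} The only possible singularities off $[a_0,b_g]$ are the points $\xi_j$, and these lie in the gaps, on the contour. Away from the $\xi_j$, (N0) follows from (H0) and the holomorphy of $\beta$ on $\Ch\setminus E$. At $\xi_j$ the blocks $F_2$ and $G_1$ may have simple poles by (F), (G). Each of them is multiplied by $\ss$, which has a simple zero at $\xi_j$ by Lemma~\ref{lem:beta}(iii). Hence $\ss F_2$ and $\ss G_1$ are bounded near $\xi_j$, and they are bounded on both sides of the gap. Once the gap jump (N2) is established, the boundary values at $\xi_j$ are related by a constant unitary factor. So no genuine singularity arises and $N$ has bounded boundary values there, which is what (N2) requires.

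\textbf{Gaps (N2).} There $\cc,\ss$ are analytic, so only the blocks jump. By (H2), the first column blocks satisfy $F_{1+}=F_{1-}K_j$ and $G_{1+}=G_{1-}K_j$. The second column blocks satisfy $F_{2+}=F_{2-}K_j^{-1}$ and $G_{2+}=G_{2-}K_j^{-1}$. Right multiplication by $\diag(K_j,K_j^{-1})$ acts in exactly this way on the two block columns, which gives (N2).

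\textbf{Cuts (N1).} Use $\cc_+=-\ss_-$ and $\ss_+=\cc_-$ from Lemma~\ref{lem:beta}(ii), together with (H1): $F_{1+}=F_{2-}$ and $F_{2+}=F_{1-}$, and similarly for $G$. Then
\[
N_+=\begin{pmatrix}-\ss_-F_{2-} & \cc_-F_{1-}\\ -\cc_-G_{2-} & -\ss_-G_{1-}\end{pmatrix}.
\]
On the other hand,
\[
N_-\begin{pmatrix}0&I_r\\-I_r&0\end{pmatrix}=\begin{pmatrix}-\ss_-F_{2-} & \cc_-F_{1-}\\ -\cc_-G_{2-} & -\ss_-G_{1-}\end{pmatrix}.
\]
The two agree. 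The main care here is sign bookkeeping; it uses that $\cc,\ss$ are scalar, so they commute with the blocks.

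\textbf{Normalisation (N3).} From Lemma~\ref{lem:beta}(iv), $\cc=1+O(z^{-2})$ and $\ss=O(z^{-1})$. By (F), $F_1(\infty)=I_r$ and $F_2$ is bounded at $\infty$. By (G), $G_2(\infty)=I_r$ and $G_1$ is bounded at $\infty$. Since $F_k,G_k$ are holomorphic at $\infty$, we get $N=I_{2r}+O(z^{-1})$.

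\textbf{Endpoints (N4).} By (H3), $F_k,G_k$ are bounded near each branch point. By Lemma~\ref{lem:beta}(i), $\beta^{\pm1}=O(|z-e|^{-1/4})$ there, so $\cc,\ss=O(|z-e|^{-1/4})$. This gives (N4).

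I expect no step to be a real obstacle, since the analytic input is contained in Lemma~\ref{lem:FGsections}. The only delicate points are the sign and orientation conventions on the cuts, and the cancellation of poles at $\xi_j$ against the zeros of $\ss$.
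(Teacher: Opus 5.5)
Your proposal is correct and follows the paper's proof essentially step for step. It uses the same block computation with $\cc_+=-\ss_-$, $\ss_+=\cc_-$ and (H1) for (N1), (H2) directly for (N2), Lemma~\ref{lem:beta}(iv) for (N3), and $\beta^{\pm1}=O(|z-e|^{-1/4})$ for (N4). Your treatment of the points $\xi_j$ is slightly more careful than the paper's: you note that they lie on the gap, so the real issue is bounded boundary values there, not holomorphy in the domain.
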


\begin{proof}
(N0) holds by (H0) together with the boundedness statements at the $\xi_j$, since
$\ss$ has a simple zero there and $\cc$ is nonvanishing.

For (N1), Lemma~\ref{lem:beta}(ii) gives $\cc_+=-\ss_-$ and $\ss_+=\cc_-$ on
$(a_k,b_k)$, so by (H1)
\[
N_+=\begin{pmatrix}-\ss_-F_{2-} & \cc_-F_{1-}\\ -\cc_-G_{2-} & -\ss_-G_{1-}\end{pmatrix}
=N_-\begin{pmatrix}0&I_r\\-I_r&0\end{pmatrix}.
\]

For (N2), $\cc$ and $\ss$ are holomorphic across the gaps, so (H2) gives
$N_+=N_-\diag(K_j,K_j^{-1})$ directly.

For (N3), Lemma~\ref{lem:beta}(iv) gives $\cc(\infty)=1$ and $\ss(\infty)=0$; with
$F_1(\infty)=G_2(\infty)=I_r$ and $F_2,G_1$ bounded at $\infty$ this yields
$N(\infty)=I_{2r}$, and the error is $O(z^{-1})$ since $\ss=O(z^{-1})$ and
$\cc=1+O(z^{-2})$.

For (N4), $\beta^{\pm1}=O\bigl(|z-e|^{-1/4}\bigr)$ at each endpoint $e$, hence so are
$\cc$ and $\ss$, while $F_k,G_k$ are bounded there by (H3).
\end{proof}

We now turn to uniqueness of the model problem.
Throughout put
\begin{equation}\label{eq:CLambda}
J:=\begin{pmatrix}0&I_r\\-I_r&0\end{pmatrix},\quad
\delta:=\begin{pmatrix}iI_r&0\\0&-iI_r\end{pmatrix},\quad
\mathcal C:=\begin{pmatrix}I_r&I_r\\iI_r&-iI_r\end{pmatrix},\quad
\Lambda:=\begin{pmatrix}\beta I_r&0\\0&\beta^{-1}I_r\end{pmatrix}.
\end{equation}
Observe, $J\mathcal C=\mathcal C\delta$, as one checks
by multiplying out; and $\det J=1$, $\det\mathcal C=(-2i)^r$, $\det\Lambda=1$. Next, by
Lemma~\ref{lem:beta}(ii) one has $\beta_+=i\beta_-$ on $E$, hence
\begin{equation}\label{eq:Lambda-jump}
\Lambda_+=\Lambda_-\delta \quad E
\end{equation}
while $\Lambda$ is holomorphic across every gap and on $\R\setminus[a_0,b_{g}]$. Third,
near an endpoint $e\in\{a_j,b_j\}_{j=0}^g$ exactly one of $\beta,\beta^{-1}$ is unbounded, and it grows like
$|z-e|^{-1/4}$; therefore
\begin{equation}\label{eq:Lambda-growth}
\Lambda(z)^{\pm1}=O\bigl(|z-e|^{-1/4}\bigr),\qquad z\to e .
\end{equation}

\begin{lemma}\label{lem:merge}
Let $N$ satisfy \textup{(N0)--(N2)} and let $e$ be an endpoint. Denote by $\Sigma_{j}$ the
gap adjacent to $e$ and put $K:=K_{j}$, with the convention $K:=I_r$ when $e=a_0$ or
$e=b_{g}$. Set
\[
\Delta_e:=\begin{cases}\diag(I_r,K), & \im z>0,\\[2pt] \diag(K,I_r), & \im z<0.\end{cases}
\]
Then $\hat N:=N\Delta_e$ has no jump on $\Sigma_j$, and $\hat N_+=\hat N_-J$ on the cut
adjacent to $e$.
\end{lemma}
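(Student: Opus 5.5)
The plan is a direct verification of the two jump relations from \textup{(N1)}--\textup{(N2)}. Since $\Delta_e$ is piecewise constant, $\hat N=N\Delta_e$ is holomorphic wherever $N$ is, off the real axis. Its boundary values are therefore $\hat N_\pm=N_\pm\,(\Delta_e)_\pm$, with $(\Delta_e)_+=\diag(I_r,K)$ and $(\Delta_e)_-=\diag(K,I_r)$.

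We read the statement locally near $e$, on the gap $\Sigma_j$ and on the cut adjacent to $e$. The other gap meeting that cut is not involved, since there $\Delta_e$ would need the other $K$. The two choices $e=b_{j-1}$ and $e=a_j$ are handled identically: the relevant gap is $\Sigma_j$ in both cases, and the jump on either adjacent cut is the same matrix $J$ by \textup{(N1)}.

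First I treat the gap. On $\Sigma_j$, \textup{(N2)} gives $N_+=N_-\diag(K,K^{-1})$. Hence
\[
\hat N_+=N_-\diag(K,K^{-1})\diag(I_r,K)=N_-\diag(K,I_r)=\hat N_- .
\]
Since $\hat N$ is bounded near interior points of the gap, Morera's theorem (or the continuity of boundary values) shows that $\hat N$ is holomorphic across $\Sigma_j$. When $e\in\{a_0,b_g\}$ we have $K=I_r$ and $\Delta_e=I_{2r}$. In that case the adjacent interval of $\R\setminus[a_0,b_g]$ already carries no jump by \textup{(N0)}, and nothing needs to be checked.

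Next I treat the cut. On the cut adjacent to $e$, \textup{(N1)} gives $N_+=N_-J$. Hence $\hat N_+=N_-J\diag(I_r,K)$, while $\hat N_-J=N_-\diag(K,I_r)J$. Multiplying out the blocks,
\[
J\diag(I_r,K)=\begin{pmatrix}0&K\\-I_r&0\end{pmatrix}=\diag(K,I_r)\,J ,
\]
and therefore $\hat N_+=\hat N_-J$.

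There is no real obstacle here. The only point needing care is the orientation convention. The gap and the cut both lie on $\R$ oriented left to right, with $+$ denoting the upper side. This must be matched with the upper/lower assignment in $\Delta_e$, so that the same formula works on both sides of $e$. The commutation identity $J\diag(I_r,K)=\diag(K,I_r)J$ is exactly what makes this consistent, and it holds for any invertible $K$; unitarity is not needed for this lemma.
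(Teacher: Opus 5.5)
Your proof is correct and is the same direct verification the paper gives. On $\Sigma_j$ you compute $N_-\diag(K,K^{-1})\diag(I_r,K)=N_-\diag(K,I_r)$, and on the cut your identity $J\diag(I_r,K)=\diag(K,I_r)J$ is just the paper's $\diag(K,I_r)^{-1}J\diag(I_r,K)=J$ rearranged; your side remarks (the endpoints $a_0,b_g$, the Morera step, and that unitarity of $K$ is not needed here) are accurate but go beyond what the lemma requires.
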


\begin{proof}
On $\Sigma_j$ we have $N_+=N_-\diag(K,K^{-1})$, so
\[
\hat N_+=N_+\diag(I_r,K)=N_-\diag(K,K^{-1})\diag(I_r,K)=N_-\diag(K,I_r)=\hat N_-.
\]
On the adjacent cut $N_+=N_-J$, so
\[
\hat N_+=\hat N_-\,\diag(K,I_r)^{-1}J\,\diag(I_r,K)
=\hat N_-\begin{pmatrix}0&K^{-1}K\\-I_r&0\end{pmatrix}=\hat N_-J. \qedhere
\]
\end{proof}

\begin{lemma}\label{lem:Phi}
Let $N$ satisfy \textup{(N0)--(N4)} and let $e$ be an endpoint. Then
\begin{equation}\label{eq:Phi-def}
\Phi:=\hat N\,\mathcal C\,\Lambda^{-1}
\end{equation}
extends holomorphically to a full disc centred at $e$.
\end{lemma}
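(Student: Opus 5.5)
We would show that $\Phi$ has no jump on the punctured disc around $e$, and then that the isolated singularity at $e$ is removable by a growth estimate. Fix a small disc $B$ centred at $e$, meeting only the adjacent cut and (if $e\ne a_0,b_g$) the adjacent gap $\Sigma_j$. On $B\setminus\R$ the function $\Phi=\hat N\mathcal C\Lambda^{-1}$ is holomorphic, since $N$ is holomorphic there by (N0), $\Delta_e$ is constant in each half-plane, and $\Lambda^{-1}$ is holomorphic and invertible off $E$.

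\textbf{No jumps.} There are three pieces of $\R\cap B\setminus\{e\}$ to check.

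On the gap side, Lemma~\ref{lem:merge} gives $\hat N_+=\hat N_-$, and $\Lambda$ is holomorphic across the gap, so $\Phi_+=\Phi_-$. When $e\in\{a_0,b_g\}$ this side lies in $\R\setminus[a_0,b_g]$, where $N$ is already holomorphic and $\Delta_e=I$.

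On the cut side, Lemma~\ref{lem:merge} gives $\hat N_+=\hat N_-J$, and \eqref{eq:Lambda-jump} gives $\Lambda_+^{-1}=\delta^{-1}\Lambda_-^{-1}$. Using $J\mathcal C=\mathcal C\delta$ we obtain
\[
\Phi_+=\hat N_-J\mathcal C\delta^{-1}\Lambda_-^{-1}=\hat N_-\mathcal C\delta\delta^{-1}\Lambda_-^{-1}=\Phi_- .
\]

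The boundary values exist in the $L^2_{\mathrm{loc}}$ (or continuous) sense inherited from $N$. Morera's theorem then shows that $\Phi$ is holomorphic on $B\setminus\{e\}$.

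\textbf{Removability.} By (N4), $\hat N=O(|z-e|^{-1/4})$, because $\Delta_e$ is a bounded constant matrix. By \eqref{eq:Lambda-growth}, $\Lambda^{-1}=O(|z-e|^{-1/4})$. Hence
\[
\Phi=O(|z-e|^{-1/2}),
\]
which is $o(|z-e|^{-1})$. A holomorphic function on a punctured disc with this growth has a removable singularity: the Laurent coefficients of negative index vanish by the Cauchy estimate. So $\Phi$ extends holomorphically to all of $B$.

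\textbf{Main obstacle.} The delicate point is the boundary-value bookkeeping. We must make sure the jump relations in (N1)–(N2) hold in a sense strong enough for Morera's theorem, for instance locally uniform or $L^1_{\mathrm{loc}}$ boundary values up to but excluding $e$. We also need the growth of $\Lambda^{-1}$ to be the crude $|z-e|^{-1/4}$ bound uniformly from both half-planes, and the principal fourth root in $\beta$ guarantees this. We would state this regularity assumption explicitly, interpreting (N1)–(N2) as continuous boundary values on the open intervals, as is standard for RH Problem~\ref{def:rhp}.
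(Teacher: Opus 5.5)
Your proposal is correct and matches the paper's proof essentially line by line. In both, $\hat N$ and $\Lambda$ continue across the adjacent gap by Lemma~\ref{lem:merge}, the cut jump cancels via $J\mathcal C=\mathcal C\delta$ and $\Lambda_+=\Lambda_-\delta$, and the bound $\Phi=O(|z-e|^{-1/2})$ makes the isolated singularity at $e$ removable; your remark that (N1)--(N2) should be read as continuous boundary values, so that Morera's theorem applies, is a reasonable clarification the paper leaves implicit.
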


\begin{proof}
Work in a disc $D$ about $e$ so small that it meets no other endpoint. By
Lemma~\ref{lem:merge}, $\hat N$ continues analytically across $D\cap\Sigma_j$, and
$\Lambda$ does too; so $\Phi$ is analytic on $D$ minus the cut adjacent to $e$. Across
that cut, using $\hat N_+=\hat N_-J$, then $J\mathcal C=\mathcal C\delta$, then
\eqref{eq:Lambda-jump},
\[
\Phi_+=\hat N_-J\mathcal C\Lambda_+^{-1}
      =\hat N_-\mathcal C\,\delta\,\bigl(\Lambda_-\delta\bigr)^{-1}
      =\hat N_-\mathcal C\Lambda_-^{-1}=\Phi_-.
\]
 Hence $\Phi$ is analytic on $D\setminus\{e\}$. By (N4) and
\eqref{eq:Lambda-growth},
\[
\Phi(z)=O\bigl(|z-e|^{-1/4}\bigr)\,O\bigl(|z-e|^{-1/4}\bigr)=O\bigl(|z-e|^{-1/2}\bigr),
\]
so the isolated singularity at $e$ is removable.
\end{proof}

\begin{proposition}\label{prop:detN}
Every solution $N$ of RH Problem~\ref{def:rhp} satisfies $\det N\equiv1$. In particular
$N(z)$ is invertible for all $z\in\C\setminus[a_0,b_{g}]$, and
$N^{-1}(z)=O\bigl(|z-e|^{-1/4}\bigr)$ as $z\to e$ at each endpoint.
\end{proposition}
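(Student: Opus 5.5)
The plan is the standard Liouville argument applied to $d(z):=\det N(z)$, which is holomorphic on $\C\setminus[a_0,b_g]$ by (N0). We first show that $d$ has no jumps on $(a_0,b_g)$, then that the endpoints are removable, and finally conclude from (N3).

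\emph{Jumps.} On each open cut $(a_k,b_k)$, (N1) gives $d_+=d_-\det J=d_-$. On each gap $\Sigma_j$, (N2) gives
\[
d_+=d_-\det\operatorname{diag}(K_j,K_j^{-1})=d_-.
\]
The boundary values exist locally uniformly on the open intervals because of the jump relations together with (N0); if needed, we obtain this by working with $\hat N$ of Lemma~\ref{lem:merge}. Morera's theorem then shows that $d$ extends holomorphically to $\C\setminus\mathcal E$.

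\emph{Endpoints.} Near an endpoint $e$ we use Lemma~\ref{lem:Phi} rather than the crude bound from (N4). That bound would only give $d=O(|z-e|^{-r/2})$, which is not enough for $r\ge 2$. Instead,
\[
\det\Phi=\det N\cdot\det\Delta_e\cdot\det\mathcal C\cdot\det\Lambda^{-1}=d\cdot\det K^{(\pm)}\cdot(-2i)^r,
\]
where $\det\Delta_e$ equals $\det K$ on both half-discs, since $\Delta_e$ is $\operatorname{diag}(I_r,K)$ above and $\operatorname{diag}(K,I_r)$ below. Hence $d=c\,\det\Phi$ with $c$ a nonzero constant on the whole punctured disc. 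Since $\Phi$ is holomorphic on the full disc, $d$ extends holomorphically to $e$. This step is the only delicate point, and Lemma~\ref{lem:Phi} handles it exactly.

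\emph{Conclusion.} Thus $d$ is entire. By (N3), $d(z)\to\det I_{2r}=1$ as $z\to\infty$, so by Liouville's theorem $d\equiv1$. In particular $N(z)$ is invertible on $\C\setminus[a_0,b_g]$.

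For the estimate on $N^{-1}$, we write $N^{-1}=\operatorname{adj}(N)$ since $\det N=1$. This again gives only a power $(r\!-\!1)/4$ from (N4), so we use the factorisation instead:
\[
N=\Phi\,\Lambda\,\mathcal C^{-1}\Delta_e^{-1},\qquad\text{hence}\qquad N^{-1}=\Delta_e\,\mathcal C\,\Lambda^{-1}\Phi^{-1}.
\]
Here $\det\Phi$ is a nonzero constant by the computation above, so $\Phi^{-1}$ is holomorphic, and in particular bounded, on the disc. By \eqref{eq:Lambda-growth}, $\Lambda^{-1}=O(|z-e|^{-1/4})$, while $\Delta_e$ and $\mathcal C$ are bounded. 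Therefore $N^{-1}(z)=O(|z-e|^{-1/4})$ as $z\to e$.
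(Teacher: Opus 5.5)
Your proof is correct and follows the paper's argument step for step. The unimodular jumps make the jump of $\det N$ on $(a_0,b_g)$ removable. The identity $\det N=(\det K)^{-1}(-2i)^{-r}\det\Phi$, together with Lemma~\ref{lem:Phi}, removes the endpoint singularities. Liouville then gives $\det N\equiv1$, and $N^{-1}=\Delta_e\,\mathcal C\,\Lambda^{-1}\Phi^{-1}$ with $\det\Phi$ a nonzero constant yields the bound.

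One harmless slip in a side remark: the naive adjugate bound from (N4) has exponent $(2r-1)/4$, not $(r-1)/4$. You never use that bound, so the proof is unaffected.
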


\begin{proof}
Both jump matrices are unimodular, since $\det J=1$ and
$\det\diag(K_j,K_j^{-1})=1$. Hence $\det N$ has no jump on $(a_0,b_{g})$ and extends
analytically to $\C$ minus the $2g+2$ endpoints. Near an endpoint $e$,
\eqref{eq:Phi-def} gives
\[
\det N=\bigl(\det\Delta_e\bigr)^{-1}\det\hat N
      =\bigl(\det K\bigr)^{-1}(-2i)^{-r}\det\Phi ,
\]
which is bounded by Lemma~\ref{lem:Phi}. So all the isolated singularities are removable,
$\det N$ is entire, and $\det N\to1$ at infinity by (N3); Liouville's theorem gives
$\det N\equiv1$.

Consequently $\det\Phi=(-2i)^r\det K$ is a nonzero constant near $e$, so $\Phi(e)$ is
invertible and $\Phi^{-1}$ is holomorphic at $e$, in particular bounded. Inverting
\eqref{eq:Phi-def} and using \eqref{eq:Lambda-growth},
\[
N^{-1}=\Delta_e\,\mathcal C\,\Lambda^{-1}\Phi^{-1}=O\bigl(|z-e|^{-1/4}\bigr). \qedhere
\]
\end{proof}

\begin{proposition}\label{prop:unique}
RH Problem~\ref{def:rhp} has exactly one solution.
\end{proposition}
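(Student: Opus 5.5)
Existence is already settled by Proposition~\ref{prop:Nsolves}, so only uniqueness remains, and we prove it by the standard Liouville argument. Let $N$ be the solution of Definition~\ref{def:N} and let $\widetilde N$ be any other solution of RH Problem~\ref{def:rhp}. By Proposition~\ref{prop:detN}, $N$ is invertible on $\C\setminus[a_0,b_g]$ and $N^{-1}$ is holomorphic there. We therefore set
\[
R:=\widetilde N\,N^{-1},
\]
which is holomorphic on $\C\setminus[a_0,b_g]$.

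Next we check that $R$ has no jumps. On each cut,
\[
R_+=\widetilde N_-J\,J^{-1}N_-^{-1}=R_- .
\]
On each gap the diagonal jumps $\diag(K_j,K_j^{-1})$ cancel in the same way. Both jump matrices are constant, so boundary values make sense in the usual $L^2_{\rm loc}$ or continuous sense away from the endpoints. By Morera's theorem, $R$ then extends holomorphically to $\C$ minus the $2g+2$ endpoints.

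At an endpoint $e$, condition (N4) for $\widetilde N$ and Proposition~\ref{prop:detN} for $N^{-1}$ give
\[
R(z)=O(|z-e|^{-1/4})\,O(|z-e|^{-1/4})=O(|z-e|^{-1/2}).
\]
This is $o(|z-e|^{-1})$, so each isolated singularity is removable. Thus $R$ is entire. By (N3), $R(z)\to I_{2r}$ as $z\to\infty$, and Liouville's theorem gives $R\equiv I_{2r}$, that is, $\widetilde N=N$.

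The only delicate step is the endpoint estimate: the naive product bound gives exactly $|z-e|^{-1/2}$, and this is precisely why the sharp bound on $N^{-1}$ from Proposition~\ref{prop:detN} (via Lemma~\ref{lem:Phi}) is needed. If one prefers, the estimate can be made more robust near $e$ by writing $R=\widetilde N\Delta_e\,\mathcal C\Lambda^{-1}\Phi^{-1}$. Here $\widetilde N\Delta_e\mathcal C\Lambda^{-1}$ is itself holomorphic at $e$ by Lemma~\ref{lem:Phi} applied to $\widetilde N$, and $\Phi^{-1}$ is holomorphic at $e$. So $R$ is visibly holomorphic at $e$, without counting exponents.
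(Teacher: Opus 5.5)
Your proof is correct and is essentially the paper's argument: you form $\widetilde N N^{-1}$, note that the common jumps cancel, combine (N4) with the bound on $N^{-1}$ from Proposition~\ref{prop:detN} to get $O(|z-e|^{-1/2})$ at each endpoint, and conclude with Liouville's theorem. Your optional variant is also valid and makes removability at $e$ immediate: write $\widetilde N N^{-1}=\widetilde\Phi\,\Phi^{-1}$, where $\widetilde\Phi:=\widetilde N\Delta_e\mathcal C\Lambda^{-1}$ is holomorphic at $e$ by Lemma~\ref{lem:Phi}.
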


\begin{proof}
Existence is Proposition~\ref{prop:Nsolves}. Let $\widetilde N$ be another solution and
put $H:=\widetilde NN^{-1}$, well defined and analytic on $\C\setminus[a_0,b_{g}]$ by
Proposition~\ref{prop:detN}. As $\widetilde N$ and $N$ have the same jump matrix $J_N$ on
each cut and each gap,
\[
H_+=\widetilde N_-J_N\,J_N^{-1}N_-^{-1}=H_- ,
\]
so $H$ extends analytically across $(a_0,b_{g})$ off the endpoints. There (N4) for
$\widetilde N$ and Proposition~\ref{prop:detN} for $N^{-1}$ give
$H=O(|z-e|^{-1/2})$, so the singularities are removable and $H$ is entire. By (N3),
$H\to I_{2r}$ at infinity, so $H\equiv I_{2r}$ by Liouville's theorem.
\end{proof}

\section{Vanishing lemma and Fredholm index}\label{vanish}
\subsection{A vanishing lemma}
\label{subsec:vanishing}

Proposition \ref{prop:unique} proves uniqueness for RH Problem \ref{def:rhp}  using that $\det N\equiv1$.  We give an alternate existence and uniqueness proof of the model problem. The first step is a vanishing lemma.  The argument is the  contour integral vanishing
lemma of Riemann--Hilbert theory, going back to Zhou \cite{Zho89}; see also
\cite[Ch.~7]{Dei99}.

The usual hypothesis in such arguments is that the jump matrix $v$ has positive
semi-definite Hermitian part $v+v^*$ on the whole contour, with strict positivity
on a set of positive measure.  
We set for ease of notations,
\begin{equation}\label{eq:vanishing-jump}
  v(x):=
  \begin{cases}
    J,&x\in(a_\ell,b_\ell),\ \ell=0,\dots,g,\\[2pt]
    \operatorname{diag}\bigl(K_j,K_j^{-1}\bigr),&x\in\Sigma_j,\ j=1,\dots,g,\\[2pt]
    I_{2r},&x\in\R\setminus[a_0,b_g],
  \end{cases}
\end{equation}
so that (N1) and (N2) gives $\widehat N_+=\widehat N_-v$ on $\R\setminus\mathcal E$.

Recall $J=\begin{psmallmatrix}0_r&I_r\\-I_r&0_r\end{psmallmatrix}$ from
\eqref{eq:CLambda}.  Then $J^2=-I_{2r}$ and $J^*=-J$, so that $J^{-1}=J^*=-J$;
in particular $J$ is unitary.  We set
\begin{equation}\label{eq:defA}
  \mathcal A:=J^{-1}=-J=
  \begin{pmatrix}0_r&-I_r\\I_r&0_r\end{pmatrix}.
\end{equation}
We record  some observations in the following lemma.
\begin{lemma}\label{lem:algebraic}
Let $\mathcal A$ be as in \eqref{eq:defA} and let $K\in\C^{r\times r}$ be
invertible.  Then
\begin{enumerate}
\item[(i)] $\mathcal A^*=-\mathcal A$, and consequently
  $I_{2r}\mathcal A+(I_{2r}\mathcal A)^*=0_{2r}$;
\item[(ii)] $J\mathcal A=I_{2r}$, and consequently
  $J\mathcal A+(J\mathcal A)^*=2I_{2r}$, which is positive definite;
\item[(iii)]Let $K$ be unitary, writing $D:=\operatorname{diag}(K,K^{-1})$, one has
  \[
    D\mathcal A+(D\mathcal A)^*=0_{2r}
    \qquad\text{for}\qquad K\in\Ur .
  \]
\end{enumerate}
\end{lemma}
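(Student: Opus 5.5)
The plan is a direct block-matrix computation for each of the three items, relying only on the explicit form of $\mathcal A$ in \eqref{eq:defA} and the block-diagonal structure of $D$.

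For (i), I first take the conjugate transpose of $\mathcal A=\begin{psmallmatrix}0_r&-I_r\\I_r&0_r\end{psmallmatrix}$. This gives
\[
\mathcal A^*=\begin{pmatrix}0_r&I_r\\-I_r&0_r\end{pmatrix}=J=-\mathcal A .
\]
Then $I_{2r}\mathcal A+(I_{2r}\mathcal A)^*=\mathcal A+\mathcal A^*=0_{2r}$.

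For (ii), since $\mathcal A=J^{-1}$ by definition, we have $J\mathcal A=I_{2r}$. As a check, one can multiply the blocks: the product of $\begin{psmallmatrix}0&I\\-I&0\end{psmallmatrix}$ and $\begin{psmallmatrix}0&-I\\I&0\end{psmallmatrix}$ is $\begin{psmallmatrix}I&0\\0&I\end{psmallmatrix}$. It follows that $J\mathcal A+(J\mathcal A)^*=2I_{2r}$, which is obviously positive definite.

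For (iii), I multiply the blocks directly:
\[
D\mathcal A=\begin{pmatrix}K&0\\0&K^{-1}\end{pmatrix}\begin{pmatrix}0&-I_r\\I_r&0\end{pmatrix}=\begin{pmatrix}0&-K\\K^{-1}&0\end{pmatrix},
\qquad
(D\mathcal A)^*=\begin{pmatrix}0&(K^{-1})^*\\-K^*&0\end{pmatrix}.
\]
Adding the two, the off-diagonal blocks are $-K+(K^{-1})^*$ and $K^{-1}-K^*$. Unitarity gives $K^{-1}=K^*$, and therefore also $(K^{-1})^*=K$. So both off-diagonal blocks vanish, and the sum is $0_{2r}$.

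There is no genuine obstacle here. The only point needing care is (iii), which is the one place the unitarity hypothesis on $K$ is used: $K^{-1}=K^*$ is exactly what makes the Hermitian part vanish. For a general invertible $K$ this would fail, which is why (iii) is stated only for $K\in\Ur$.
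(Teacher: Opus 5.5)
Your proposal is correct and follows the paper's proof: (i) and (ii) are read off directly from the explicit form of $\mathcal A=-J$. For (iii), you carry out the same block multiplication as the paper, giving $D\mathcal A=\begin{pmatrix}0&-K\\K^{-1}&0\end{pmatrix}$, and then use $K^{-1}=K^*$ to make the Hermitian part vanish.
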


\begin{proof}
(i) and (ii) follows from \eqref{eq:defA}. 

(iii) A direct computation gives
\[
  D\mathcal A
  =\begin{pmatrix}K&0_r\\0_r&K^{-1}\end{pmatrix}
   \begin{pmatrix}0_r&-I_r\\I_r&0_r\end{pmatrix}
  =\begin{pmatrix}0_r&-K\\K^{-1}&0_r\end{pmatrix},
  \qquad
  (D\mathcal A)^*
  =\begin{pmatrix}0_r&(K^{-1})^*\\-K^*&0_r\end{pmatrix}.
\]
Hence $(D\mathcal A)^*=-D\mathcal A$ holds if and only if $(K^{-1})^*=K$ and
$K^*=K^{-1}$.
\end{proof}

Observe that $K$ being unitary is crucial in (iii) of Lemma \ref{lem:algebraic}
This is the same spirit as in Proposition \ref{prop:pairing}, where
unitarity is what makes the pairing $\langle U,V\rangle_\tau$ extend across
$\Gamma_j$.

\begin{proposition}\label{prop:vanishing}
Let $K_1,\dots,K_g\in\Ur$ and let
$\widehat N:\C\setminus[a_0,b_g]\to\C^{2r\times2r}$ satisfy the homogeneous
version of RH Problem \textup{\ref{def:rhp}}, namely
\begin{enumerate}
\item[\textup{($\widehat{\text N}$0)}] $\widehat N$ is holomorphic on
  $\C\setminus[a_0,b_g]$;
  $\widehat N_\pm$ on $\R\setminus\mathcal E$;
\item[\textup{($\widehat{\text N}$1)}] $\widehat N_+=\widehat N_-v$ on
  $\R\setminus\mathcal E$, with $v$ as in \eqref{eq:vanishing-jump};
\item[\textup{($\widehat{\text N}$2)}] $\widehat N(z)=O(z^{-1})$ as
  $z\to\infty$;
\item[\textup{($\widehat{\text N}$3)}]
  $\widehat N(z)=O\bigl(|z-e|^{-1/4}\bigr)$ as $z\to e$, for each
  $e\in\mathcal E$.
\end{enumerate}
Then $\widehat N\equiv0$.
\end{proposition}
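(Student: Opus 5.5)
We follow the classical Zhou vanishing-lemma argument, using the matrix $\mathcal A$ from \eqref{eq:defA} and the algebraic identities of Lemma~\ref{lem:algebraic}.

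\textbf{Step 1: the auxiliary function.} Define, for $z\in\C\setminus\R$,
\[
  \mathcal H(z):=\widehat N(z)\,\mathcal A\,\widehat N(\bar z)^*,
\]
whose first factor is analytic in $z$ and whose last factor is analytic in $z$ as well. Since $\mathcal A$ is constant, $\mathcal H$ is analytic in the upper half plane. By ($\widehat{\text N}$2) we have $\mathcal H(z)=O(z^{-2})$ as $z\to\infty$. Near each endpoint, ($\widehat{\text N}$3) gives $\mathcal H=O(|z-e|^{-1/2})$, which is integrable on $\R$. Hence Cauchy's theorem applies on a large semicircle in $\C_+$ indented around the endpoints, and the indentation contributions vanish in the limit because $|z-e|^{1}\cdot|z-e|^{-1/2}\to0$. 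The arc at infinity also vanishes since $\mathcal H=O(z^{-2})$. This yields
\[
  \int_\R \widehat N_+(x)\,\mathcal A\,\widehat N_-(x)^*\,dx=0,
\]
because for real $x$ the boundary value of $\widehat N(\bar z)^*$ from above is $\widehat N_-(x)^*$.

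\textbf{Step 2: positivity.} Substitute $\widehat N_+=\widehat N_-v$ into the identity of Step~1 to obtain
\[
  \int_\R \widehat N_-\,v\mathcal A\,\widehat N_-^*\,dx=0 .
\]
Adding this identity to its Hermitian adjoint gives
\[
  \int_\R \widehat N_-\bigl(v\mathcal A+(v\mathcal A)^*\bigr)\widehat N_-^*\,dx=0 .
\]
By Lemma~\ref{lem:algebraic} the middle factor equals $0$ on the gaps (this is item (iii), which uses the unitarity of $K_j$) and on $\R\setminus[a_0,b_g]$ (item (i)). On the cuts it equals $2I_{2r}$ (item (ii)). Therefore
\[
  \sum_\ell\int_{a_\ell}^{b_\ell}\widehat N_-\widehat N_-^*\,dx=0,
\]
and so $\widehat N_-=0$ almost everywhere on $E$. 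The jump relation $\widehat N_+=\widehat N_-J$ then gives $\widehat N_+=0$ on $E$ as well.

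\textbf{Step 3: conclusion.} Since $\widehat N_\pm$ vanish on an interval $(a_0,b_0)$, the function $\widehat N$ extended by $0$ across that interval is analytic by Morera's theorem. Alternatively one may apply Privalov's uniqueness theorem, or note that $\widehat N$ continues analytically across $(a_0,b_0)$ with value $0$. In either case $\widehat N\equiv0$ on the connected domain $\C\setminus[a_0,b_g]$.

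\textbf{Main obstacle.} The delicate point is justifying the contour deformation in Step~1. First, the boundary values $\widehat N_\pm$ must exist in a sense (for instance $L^2_{\mathrm{loc}}$ or a.e.\ nontangential) strong enough for Cauchy's theorem in the limit. Second, the endpoint estimate $O(|z-e|^{-1/2})$ is only borderline integrable. We would handle both by integrating over $\R+i\varepsilon$ with small circles removed and letting $\varepsilon\to0$, using the uniform bound ($\widehat{\text N}$3) and dominated convergence. Where the boundary values are only a.e., we would if necessary invoke the local analytic continuation of Lemma~\ref{lem:Phi}, applied to $\widehat N$ in place of $N$, which shows that $\widehat N_\pm$ are continuous away from $\mathcal E$.
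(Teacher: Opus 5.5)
Your proposal is correct and follows essentially the same route as the paper: the same auxiliary function $\widehat N(z)\,\mathcal A\,\widehat N(\bar z)^*$ on $\C_+$, the same indented-contour Cauchy argument, and the same use of Lemma~\ref{lem:algebraic} to reduce to $\int_{E^\circ}\widehat N_-\widehat N_-^*\,dx=0$. The only difference is cosmetic and occurs in the last step. You continue $\widehat N$ across a cut using that both boundary values vanish (or appeal to Privalov's theorem). The paper instead continues $\widehat N J^{-1}$ from above and $\widehat N$ from below across the cut by Morera, and then uses continuity to upgrade vanishing almost everywhere to vanishing everywhere.
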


\begin{proof}
Write $\C_\pm=\{z\in\C:\pm\im z>0\}$ and define
\begin{equation}\label{eq:defQ}
  Q(z):=\widehat N(z)\,\mathcal A\,\widehat N(\bar z)^*,
  \qquad z\in\C_+ .
\end{equation}

Since
$z\mapsto\widehat N(\bar z)^*$ is holomorphic on $\C_+$ so is $Q$, and for $x\in\R\setminus\mathcal E$,
\begin{equation}\label{eq:Qplus}
  Q_+(x)=\widehat N_+(x)\,\mathcal A\,\widehat N_-(x)^*,
\end{equation}
because $z\to x$ in $\C_+$ means $\bar z\to x$ in $\C_-$.

By ($\widehat{\text N}$2), $\widehat N(z)=O(z^{-1})$ and
$\widehat N(\bar z)^*=O(z^{-1})$ as $z\to\infty$ in $\C_+$, so
\begin{equation}\label{eq:Qinfty}
  Q(z)=O(z^{-2}),\qquad z\to\infty .
\end{equation}
By ($\widehat{\text N}$3), for $e\in\mathcal E$,
\begin{equation}\label{eq:Qendpoint}
  Q(z)=O\bigl(|z-e|^{-1/2}\bigr),\qquad z\to e .
\end{equation}
In particular $Q_+\in L^1(\R)$.  We next show, $\int_\R Q_+(x)\,dx=0$.
Let $0<\varepsilon<\tfrac12\min\{|e-e'|:e\ne e'\in\mathcal E\}$ and let
$R>\max_{e\in\mathcal E}|e|+1$.  Put
\[
  \Omega_{R,\varepsilon}
  :=\bigl\{z\in\C_+:|z|<R \text{ and } |z-e|>\varepsilon
    \text{ for all }e\in\mathcal E\bigr\}.
\]
Then $Q$ is holomorphic on $\Omega_{R,\varepsilon}$ and continuous on
$\overline{\Omega_{R,\varepsilon}}$, so Cauchy's theorem gives
\begin{equation}\label{eq:cauchy}
  \int_{\partial\Omega_{R,\varepsilon}}Q(z)\,dz=0 ,
\end{equation}
the boundary being positively oriented: the parts of $[-R,R]$ at distance at
least $\varepsilon$ from $\mathcal E$ traversed from left to right, the upper
semicircles $|z-e|=\varepsilon$ traversed clockwise, and the semicircle $|z|=R$
traversed counterclockwise.

On the large semicircle, \eqref{eq:Qinfty} gives $\|Q\|=O(R^{-2})$ while the
length is $\pi R$, so that contribution is $O(R^{-1})$ and tends to $0$ as
$R\to\infty$; by \eqref{eq:Qinfty} the integral over the real part converges
absolutely, so letting $R\to\infty$ in \eqref{eq:cauchy} with $\varepsilon$
fixed yields
\[
  \int_{\{x\in\R:\,|x-e|\ge\varepsilon\ \forall e\in\mathcal E\}}Q_+(x)\,dx
  \;+\;\sum_{e\in\mathcal E}\int_{|z-e|=\varepsilon,\ \im z>0}Q(z)\,dz=0 .
\]
On a small semicircle, \eqref{eq:Qendpoint} gives
$\|Q\|=O(\varepsilon^{-1/2})$ while the length is $\pi\varepsilon$, so that
contribution is $O(\varepsilon^{1/2})$ and tends to $0$ as
$\varepsilon\downarrow0$.  Since $Q_+\in L^1(\R)$, dominated convergence gives
\begin{equation}\label{eq:Qzero}
  \int_\R Q_+(x)\,dx=0 .
\end{equation}

By \eqref{eq:Qplus} and ($\widehat{\text N}$1),
$Q_+=\widehat N_-\,v\mathcal A\,\widehat N_-^*$ on $\R\setminus\mathcal E$, so
\eqref{eq:Qzero} reads
\[
  \int_\R\widehat N_-(x)\,v(x)\mathcal A\,\widehat N_-(x)^*\,dx=0_{2r}.
\]
Taking Hermitian adjoints and adding the result to this identity gives
\begin{equation}\label{eq:hermitianpart}
  \int_\R\widehat N_-(x)
    \Bigl(v(x)\mathcal A+\bigl(v(x)\mathcal A\bigr)^*\Bigr)
    \widehat N_-(x)^*\,dx=0_{2r}.
\end{equation}
By Lemma \ref{lem:algebraic},
\[
  v\mathcal A+(v\mathcal A)^*
  =\begin{cases}
     2I_{2r},&\text{on the cuts, by (ii)},\\
     0_{2r},&\text{on the gaps, by (iii), since }K_j\in\Ur,\\
     0_{2r},&\text{on }\R\setminus[a_0,b_g],\text{ by (i)} .
   \end{cases}
\]
Therefore \eqref{eq:hermitianpart} becomes
\[
  2\int_{E^\circ}\widehat N_-(x)\widehat N_-(x)^*\,dx=0_{2r},
\]
 Taking traces and using
$\operatorname{tr}(AA^*)=\sum_{p,q}|A_{pq}|^2=\|A\|_{\mathrm{HS}}^2$, we obtain
\[
  \int_{E^\circ}\bigl\|\widehat N_-(x)\bigr\|_{\mathrm{HS}}^2\,dx=0 ,
\]
the integral being finite because $\|\widehat N_-\|^2_{\mathrm{HS}}
=O(|x-e|^{-1/2})$ at each endpoint by ($\widehat{\text N}$3).  The integrand is
nonnegative, so
\begin{equation}\label{eq:Nminuszero}
  \widehat N_-(x)=0_{2r}\qquad\text{for almost every }x\in E^\circ,
\end{equation}
and then $\widehat N_+=\widehat N_-J=0_{2r}$ almost everywhere on $E^\circ$ as
well.
Fix a cut $(a_\ell,b_\ell)$ and define
\[
  \widetilde N(z):=
  \begin{cases}
    \widehat N(z),&\im z<0,\\
    \widehat N(z)J^{-1},&\im z>0 .
  \end{cases}
\]
For $x\in(a_\ell,b_\ell)$ the boundary values satisfy
$\widetilde N_+(x)=\widehat N_+(x)J^{-1}=\widehat N_-(x)JJ^{-1}
=\widehat N_-(x)=\widetilde N_-(x)$.  By Morera's theorem $\widetilde N$ extends
holomorphically across $(a_\ell,b_\ell)$; denote by $\Lambda$ the open set
$\C_+\cup\C_-\cup(a_\ell,b_\ell)$, which is connected, and by $\widetilde N$ the
extension, holomorphic on $\Lambda$.

The restriction of $\widetilde N$ to $(a_\ell,b_\ell)$ equals $\widehat N_-$,
hence vanishes almost everywhere there by \eqref{eq:Nminuszero}; being
continuous, it vanishes identically on $(a_\ell,b_\ell)$ so the identity theorem gives $\widetilde N\equiv0$ on
$\Lambda$.  In particular $\widehat N\equiv0$ on $\C_+\cup\C_-$, and by
continuity on all of $\C\setminus[a_0,b_g]$.
\end{proof}

\begin{corollary}\label{cor:vanishing-uniqueness}
RH Problem \ref{def:rhp} has at most one solution.
\end{corollary}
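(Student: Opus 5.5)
The plan is the standard reduction of uniqueness to the vanishing lemma, Proposition~\ref{prop:vanishing}. Suppose $N$ and $\widetilde N$ both solve RH Problem~\ref{def:rhp}, and set $\widehat N:=\widetilde N-N$. I will check that $\widehat N$ satisfies the homogeneous conditions ($\widehat{\text N}$0)--($\widehat{\text N}$3), and then conclude that $\widehat N\equiv0$.

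The checks go in order.
\begin{enumerate}
\item Holomorphy on $\C\setminus[a_0,b_g]$ holds because $\widehat N$ is a difference of two holomorphic functions.
\item For the jump, (N1) and (N2) give $N_+=N_-v$ and $\widetilde N_+=\widetilde N_-v$ on the cuts and gaps, with $v$ as in \eqref{eq:vanishing-jump}. On $\R\setminus[a_0,b_g]$ both are analytic, so $v=I_{2r}$ there. Since the jump relation is linear in $N$, we get $\widehat N_+=\widehat N_-v$ on $\R\setminus\mathcal E$.
\item At infinity, both solutions equal $I_{2r}+O(z^{-1})$ by (N3), so the identity terms cancel and $\widehat N=O(z^{-1})$.
\item At each endpoint, (N4) for both solutions and the triangle inequality give $\widehat N=O(|z-e|^{-1/4})$.
\end{enumerate}

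One point needs care: the vanishing lemma uses the boundary values $\widehat N_\pm$ on $\R\setminus\mathcal E$. The problem statement implicitly requires boundary values on the open cuts and gaps for (N1)--(N2) to make sense, so these exist for both $N$ and $\widetilde N$, and hence for their difference. If one wants continuous boundary values, I would obtain them from the local analytic continuation of Lemma~\ref{lem:merge} and Lemma~\ref{lem:Phi} away from the endpoints. That continuation also gives local integrability near $\mathcal E$, which the contour argument in Proposition~\ref{prop:vanishing} uses.

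Proposition~\ref{prop:vanishing} then yields $\widehat N\equiv0$, that is, $\widetilde N=N$. I expect no real obstacle. The only subtle point is the regularity of boundary values, and the endpoint bound, which gives an $O(|x-e|^{-1/2})$ integrable majorant for $Q_+$, is exactly what the lemma's proof already handles.
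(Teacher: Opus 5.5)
Your proposal is correct and is essentially the paper's own proof. The difference of two solutions satisfies the homogeneous conditions: the jumps carry over by linearity, the $I_{2r}$ terms cancel at infinity by (N3), and the $O(|z-e|^{-1/4})$ endpoint bound follows from (N4), so Proposition~\ref{prop:vanishing} forces the difference to vanish. Your extra remark on the existence of the boundary values $\widehat N_\pm$ on $\R\setminus\mathcal E$ is harmless additional care that the paper leaves implicit.
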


\begin{proof}
If $N^{(1)}$ and $N^{(2)}$ are solutions, put $\widehat N:=N^{(1)}-N^{(2)}$.
Then $\widehat N$ is holomorphic on $\C\setminus[a_0,b_g]$ and, the jump
relations being linear in $N$, satisfies $\widehat N_+=\widehat N_-v$.  By (N3)
we have $N^{(i)}=I_{2r}+O(z^{-1})$, so the constant terms cancel and
$\widehat N(z)=O(z^{-1})$, and by (N4) we have
$\widehat N(z)=O(|z-e|^{-1/4})$.  Proposition \ref{prop:vanishing} gives
$\widehat N\equiv0$.
\end{proof}

\subsection{Solvability by Fredholm theory}
\label{subsec:fredholm}

In this subsection we prove unique solvability of RH Problem \ref{def:rhp}
without using the bundle $E_K$.  The vanishing lemma of Section
\ref{subsec:vanishing} gives injectivity of the associated singular integral
operator; what has to be added is that this operator has Fredholm index zero,
and this is the content of Proposition \ref{prop:indexzero}.  The reader who is
content with Propositions \ref{prop:Nsolves} and \ref{prop:unique} may skip the
subsection; we include it because the index computation turns out to be
completely explicit.
\subsubsection{The \texorpdfstring{$L^2$}{L2} formulation}

Let $\Gamma:=\widehat\R=\R\cup\{\infty\}$, and observe $v$ is piecewise constant, its only
discontinuities on $\Gamma$ are the $2g+2$ points of
$\mathcal E$, and $v^{\pm1}\in L^\infty(\Gamma)$ with
$\|v\|_\infty=\|v^{-1}\|_\infty=1$.

Write $\mathcal C$ for the Cauchy transform,
\[
  (\mathcal C\phi)(z)=\frac1{2\pi i}\int_\Gamma\frac{\phi(s)}{s-z}\,ds,
  \qquad z\in\C\setminus\R,
\]
acting entrywise, and $\mathcal C_\pm$ for its non-tangential boundary values.
By the Sokhotskii-Plemelj formula $\mathcal C_+-\mathcal C_-=\mathrm{id}$ and
$\mathcal C_++\mathcal C_-=S$, where $S$ is the Cauchy principal value integral; since
$S^2=\mathrm{id}$ on $L^2(\R)$, the operators
\begin{equation}\label{eq:projections}
  P:=\mathcal C_+=\tfrac12(\mathrm{id}+S),
  \qquad
  Q:=-\mathcal C_-=\tfrac12(\mathrm{id}-S)
\end{equation}
are bounded complementary projections on $L^2(\Gamma)$, with
$\operatorname{im}P=H^2(\C_+)$ and $\operatorname{im}Q=H^2(\C_-)$ \cite{DKMVZ01}.

We seek the rows of $N$ separately.  Let $\varepsilon_i\in\C^{1\times2r}$ be the
$i$-th standard row vector and look for
\begin{equation}\label{eq:ansatz}
  N^{(i)}=\varepsilon_i+\mathcal C\phi^{(i)},
  \qquad \phi^{(i)}\in L^2\bigl(\Gamma,\C^{1\times2r}\bigr).
\end{equation}
Substituting $N^{(i)}_+=\varepsilon_i+P\phi^{(i)}$ and
$N^{(i)}_-=\varepsilon_i-Q\phi^{(i)}$ into $N^{(i)}_+=N^{(i)}_-v$ gives
$T\phi^{(i)}=\varepsilon_i(v-I_{2r})$, where
\begin{equation}\label{eq:defT}
  T\phi:=P\phi+(Q\phi)\,v,
  \qquad T:L^2\bigl(\Gamma,\C^{1\times2r}\bigr)\longrightarrow
              L^2\bigl(\Gamma,\C^{1\times2r}\bigr).
\end{equation}
The right-hand side lies in $L^2$ because $v-I_{2r}$ is bounded and supported on
the compact set $[a_0,b_g]$.  We say that $N$ is an \emph{$L^2$ solution} of RH
Problem \ref{def:rhp} if its rows are of the form \eqref{eq:ansatz} with
$T\phi^{(i)}=\varepsilon_i(v-I_{2r})$.

If $T\phi=0$, then $\widehat N=C\phi$ is a homogeneous $L^2$ solution of the
Riemann--Hilbert problem. Applying the vanishing lemma of Proposition \ref{prop:vanishing}
row-wise gives $\widehat N\equiv 0$, and hence
\[
\phi=\widehat N_+-\widehat N_-=0.
\]
Therefore
\begin{equation}\label{eq:ker0}
\ker T=\{0\}.
\end{equation}

In what follows we show $T$ in \eqref{eq:defT} is Fredholm of index zero. Combining  this with \eqref{eq:ker0} it then follows that $T$ is then invertible. Our strategy is the following. We reduce $T$ to a Toeplitz operator with piece-wise constant symbol. This then allows us to compute the index in terms of a winding number of a determinant. This computation miraculously turns to be  explicit.
\subsubsection{Reduction to a Toeplitz operator}

 Writing $\psi=\phi^T$ and
$u:=v^T$, the operator $T$ of \eqref{eq:defT} is similar to
\begin{equation}\label{eq:defA2}
  \mathcal{B}:=P+uQ \quad\text{on } L^2\bigl(\Gamma,\C^{2r}\bigr),
\end{equation}
so that $T$ and $\mathcal{B}$ are Fredholm together and have the same index.   

\begin{lemma}\label{lem:toeplitzreduction}
Let $b\in L^\infty(\Gamma)^{n\times n}$ with $b^{-1}\in L^\infty$, and let
$B:=P+bQ$ on $L^2(\Gamma,\C^n)$.  Put $c:=b^{-1}$ and let
$T(c):=Pc|_{\operatorname{im}P}:\operatorname{im}P\to\operatorname{im}P$ be the
Toeplitz operator with symbol $c$.  Then $B$ is Fredholm if and only if $T(c)$
is, and in that case $\operatorname{ind}B=\operatorname{ind}T(c)$.
\end{lemma}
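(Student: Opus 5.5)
The plan is to factor $B$ explicitly through operators whose Fredholm properties are transparent. Since $P$ and $Q$ are complementary projections, decompose $L^2(\Gamma,\C^n)=\operatorname{im}P\oplus\operatorname{im}Q$ and write $\phi=P\phi+Q\phi$. Multiplication by $b$ and by $c=b^{-1}$ are bounded and invertible on $L^2$. The first step is the factorisation
\[
  B=P+bQ=b\,(cP+Q).
\]
Hence $B$ is Fredholm iff $cP+Q$ is, with the same index, because $b$ is an invertible multiplication operator.

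Next I analyse $cP+Q$ in block form with respect to $\operatorname{im}P\oplus\operatorname{im}Q$ (domain) and $\operatorname{im}P\oplus\operatorname{im}Q$ (target). For $\phi=\phi_++\phi_-$ one has
\[
  (cP+Q)\phi=Pc\phi_+ + \bigl(Qc\phi_+ + \phi_-\bigr),
\]
so the block matrix is
\[
  \begin{pmatrix}T(c)&0\\ QcP&I\end{pmatrix}.
\]
This is lower triangular with identity in the $(2,2)$ entry. It factors as
\[
  \begin{pmatrix}I&0\\ QcP&I\end{pmatrix}\begin{pmatrix}T(c)&0\\0&I\end{pmatrix},
\]
and the left factor is invertible, with inverse obtained by negating the off-diagonal entry. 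Therefore $cP+Q$ is Fredholm iff $\operatorname{diag}(T(c),I)$ is, which happens iff $T(c)$ is Fredholm. Kernel and cokernel dimensions match exactly, so $\operatorname{ind}(cP+Q)=\operatorname{ind}T(c)$.

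Combining the two steps gives both claims. The only points needing care are bookkeeping: the block decomposition uses that $P+Q=\mathrm{id}$ and $P,Q$ are bounded projections, which \eqref{eq:projections} provides. The decomposition is direct (not necessarily orthogonal, though here it is), so the isomorphism $L^2\cong\operatorname{im}P\oplus\operatorname{im}Q$ is bounded with bounded inverse. I expect no real obstacle. The mildest subtlety is noting that Fredholmness and index are invariant under composition with invertible bounded operators, and that the direct-sum identification preserves these.
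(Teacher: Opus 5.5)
Your proposal is correct and follows essentially the same route as the paper: factor out the invertible multiplication operator $b$ to pass to $cP+Q$, then write $cP+Q$ in block form on $\operatorname{im}P\oplus\operatorname{im}Q$ and split off the invertible lower-triangular factor, leaving $\operatorname{diag}(T(c),\mathrm{id})$. The bookkeeping remarks you add (boundedness of the direct-sum identification, invariance of Fredholmness and index under composition with invertibles) are precisely the facts the paper's proof uses implicitly.
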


\begin{proof}
Multiplication by $b^{-1}$ is boundedly invertible on $L^2$, so $B$ and
$b^{-1}B=cP+Q$ are Fredholm together with the same index.  Decompose
$L^2=\operatorname{im}P\oplus\operatorname{im}Q$ and let $f=f_++f_-$ be the
corresponding splitting.  Then
\[
  (cP+Q)f=cf_++f_-=\underbrace{Pcf_+}_{\in\,\operatorname{im}P}
  +\underbrace{Qcf_++f_-}_{\in\,\operatorname{im}Q},
\]
so that with respect to this decomposition
\[
  cP+Q=\begin{pmatrix}Pc|_{\operatorname{im}P}&0\\
                      Qc|_{\operatorname{im}P}&\mathrm{id}\end{pmatrix}
      =\begin{pmatrix}\mathrm{id}&0\\ Qc|_{\operatorname{im}P}&\mathrm{id}\end{pmatrix}
       \begin{pmatrix}Pc|_{\operatorname{im}P}&0\\0&\mathrm{id}\end{pmatrix}.
\]
The first factor is boundedly invertible, with inverse obtained by changing the
sign of the off-diagonal entry.  The claim follows.
\end{proof}

Applying Lemma \ref{lem:toeplitzreduction} to \eqref{eq:defA2} with $b=u=v^T$, we
must study the Toeplitz operator with symbol
\begin{equation}\label{eq:symbol}
  c:=u^{-1}=\bigl(v^{-1}\bigr)^{T}\in PC_{2r\times2r}(\Gamma).
\end{equation}

(Here $PC$ stands for piecewise continuous.) We now quote the one external result that we use for Fredholmness and for the
index.  Recall that for $c\in PC_{n\times n}(\Gamma)$ one sets
\begin{equation}\label{eq:hashsymbol}
  c^{\#}(\tau,\mu):=(1-\mu)\,c(\tau-0)+\mu\,c(\tau+0),
  \qquad \tau\in\Gamma,\ \mu\in[0,1],
\end{equation}
and lets $\det c^{\#}$ denote the closed curve in $\C$ obtained by traversing
$\tau\mapsto\det c(\tau)$ along $\Gamma$ and inserting, at each point of
discontinuity, the arc $\mu\mapsto\det c^{\#}(\tau,\mu)$.

\begin{theorem}\cite[\S5.48 (c)]{BS06}\label{thm:GK}
Let $c\in PC_{n\times n}(\Gamma)$ with $c^{\pm1}\in L^\infty(\Gamma)$.  Then
$T(c)$ is Fredholm on $\operatorname{im}P\cong H^2(\C_+)^n$ if and only if
\[
  \det c^{\#}(\tau,\mu)\ne0
  \qquad\text{for all }\tau\in\Gamma,\ \mu\in[0,1],
\]
and in that case
$\operatorname{ind}T(c)=-\operatorname{wind}\bigl(\det c^{\#}\bigr)$.
\end{theorem}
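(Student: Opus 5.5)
Since this statement is quoted from \cite[\S5.48 (c)]{BS06}, we only outline how it is proved there, in four steps.

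\textbf{Step 1: transfer to the unit circle.} We pass from $\Gamma=\widehat\R$ to $\T$ by the Cayley transform $w=(z-i)/(z+i)$. The map
\[
f\mapsto \frac{1}{\sqrt\pi\,(1-w)}\,f\circ(\text{inverse Cayley})
\]
is a unitary map of $L^2(\Gamma)$ onto $L^2(\T)$ carrying $H^2(\C_+)$ onto $H^2(\D)$. It intertwines $T(c)$ with the Toeplitz operator $T(\tilde c)$ on $H^2(\D)^n$, where $\tilde c=c\circ(\text{inverse Cayley})$. Piecewise continuity is preserved; the point $\infty\in\Gamma$ becomes $w=1$, and a possible jump there is treated like any other jump. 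The curve $\det c^{\#}$ is carried to $\det\tilde c^{\#}$ with the same winding number. So it suffices to prove the theorem on $\T$.

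\textbf{Step 2: the Fredholm criterion.} We use localization, namely the Allan--Douglas local principle applied in the Calkin algebra to the algebra generated by Toeplitz operators with $PC$ symbols. Its centre contains the cosets of $T(f)$ with $f$ continuous, so we localize at each $\tau\in\T$. At a point $\tau$ the local algebra is generated by $T(\chi_\tau)$ together with constants, where $\chi_\tau$ is a model symbol with a single jump at $\tau$. The local spectrum of $T(\chi_\tau)$ is exactly the segment joining $0$ and $1$; this is the classical computation that the essential spectrum of the Toeplitz operator with symbol $\chi_\tau$ is that segment. It follows that $T(\tilde c)$ is locally invertible at $\tau$ if and only if $(1-\mu)\tilde c(\tau-0)+\mu\tilde c(\tau+0)$ is invertible for all $\mu\in[0,1]$. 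For matrix symbols this uses that the local algebra is isomorphic to $2\times2$ matrix functions on $[0,1]$, via the Gohberg--Krupnik symbol calculus. This yields the criterion $\det c^{\#}\ne0$.

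\textbf{Step 3: the index formula.} Assume $\det c^{\#}$ does not vanish. We deform $\tilde c$ by a homotopy inside $PC$ that keeps $\det c^{\#}$ away from $0$ and ends at a continuous symbol. Concretely, at each of the finitely many jumps (for a general $PC$ symbol, first approximate it by one with finitely many jumps) we replace the jump by the linear interpolation $c^{\#}(\tau,\cdot)$ spread over a shrinking arc of length $t$. Fredholmness is preserved along the deformation by Step 2, and the index is locally constant, hence constant. The continuous symbol $\tilde c_t$ so obtained has $\det\tilde c_t$ tracing, up to reparametrization, the curve $\det c^{\#}$. For continuous invertible matrix symbols, the Gohberg--Krein theorem gives
\[
\operatorname{ind}T(f)=-\operatorname{wind}\det f .
\]
This is proved by reducing to rational $f$ by uniform approximation, and then to scalar $\det f$ by a Wiener--Hopf/Birkhoff factorization combined with the multiplicativity of the index modulo compacts.

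\textbf{Main obstacle.} The delicate step is the matrix local analysis in Step 2. In the scalar case the local spectrum computation suffices, but for matrices one needs the two-projection theorem, or the Gohberg--Krupnik $2\times2$ symbol, to identify the local algebras and confirm that invertibility of the whole arc $c^{\#}(\tau,\mu)$ is exactly the right condition.
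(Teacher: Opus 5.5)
The paper gives no proof of this statement; it is quoted from the cited monograph. Your outline can therefore only be checked against the standard argument. Steps 1 and 2 reach the right conclusions, but the homotopy in Step 3 fails at its endpoint.

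The family $t\mapsto\tilde c_t$, obtained by spreading each jump over an arc of length $t$, is not continuous in $L^\infty$ at $t=0$. On the arc, $\tilde c_t$ sweeps out the whole segment $\{c^{\#}(\tau,\mu):\mu\in[0,1]\}$, while $\tilde c$ stays within $o(1)$ of its two endpoints. Hence
\[
\|T(\tilde c_t)-T(\tilde c)\|=\|\tilde c_t-\tilde c\|_\infty\ \ge\ \tfrac12\bigl\|c(\tau+0)-c(\tau-0)\bigr\|-o(1),\qquad t\downarrow0 .
\]
The difference is not compact either. The convergence $T(\tilde c_t)\to T(\tilde c)$ is only strong, and the index is not stable under strong limits. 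Local constancy of the index therefore only shows that $\operatorname{ind}T(\tilde c_t)$ is independent of $t>0$. It does not connect these indices to $\operatorname{ind}T(\tilde c)$, which is the whole point of the step.

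The repair is to fix one small $t>0$ and use the straight-line homotopy $h_s=(1-s)\tilde c+s\tilde c_t$, $s\in[0,1]$, which is norm continuous. Each $h_s$ lies in $PC$, and:
\begin{itemize}
\item its jump at $\tau$ runs along a subsegment of $c^{\#}(\tau,[0,1])$;
\item its values on the arc combine a point of that segment with a point within $o(1)$ of its endpoints, so they lie within $o(1)$ of the segment;
\item any jump it acquires at an end of the arc is between two values close to $c(\tau-0)$ or to $c(\tau+0)$.
\end{itemize}
Since $\det$ is bounded away from $0$ on a neighbourhood of the compact segment, $\det h_s^{\#}\neq0$ for all $s$ once $t$ is small. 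By Step 2 the whole path then consists of Fredholm operators, so $\operatorname{ind}T(\tilde c)=\operatorname{ind}T(\tilde c_t)=-\operatorname{wind}\det\tilde c_t$. This last curve is homotopic to $\det c^{\#}$ in $\C\setminus\{0\}$. Alternatively, as in the literature, split off each jump with a canonical power function whose Toeplitz operator is invertible by an explicit Wiener--Hopf factorisation.

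Your justification of Step 2, and your ``main obstacle'' paragraph, are also inaccurate, although the criterion you reach is correct. For Toeplitz operators on $H^2$ no two-projection theorem is needed:
\begin{itemize}
\item for $a,b\in PC$ the commutator $T(a)T(b)-T(b)T(a)$ is compact, so the Calkin image of the scalar algebra generated by $T(PC)$ is a commutative $C^*$-algebra;
\item its local algebra at $\tau$ is $C([0,1])$, generated by the self-adjoint local coset of $T(\chi_\tau)$, whose local spectrum is $[0,1]$;
\item for $PC_{n\times n}$ symbols the local algebra is $C([0,1])^{n\times n}$, not an algebra of $2\times2$ matrix functions.
\end{itemize}
An $n\times n$ matrix over a commutative algebra is invertible if and only if its determinant is, which gives $\det c^{\#}\neq0$ at once. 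Inverse closedness of $C^*$-subalgebras then turns invertibility in this subalgebra into Fredholmness, and conversely. The $2\times2$ Gohberg--Krupnik symbol and the two-projection theorem belong to the algebra generated by $aP+bQ$ on all of $L^2$, i.e.\ to the operator $\mathcal B$ before the reduction of Lemma~\ref{lem:toeplitzreduction}. In the matrix Toeplitz setting the passage from scalar to matrix symbols is the easy part; the real work is the index argument above.
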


It turns out using Theorem \ref{thm:GK} we can explicitly compute the index of $T$ for the symbol $c$ \eqref{eq:symbol}.
\subsubsection{The index}
\begin{proposition}\label{prop:indexzero}
For every $K=(K_1,\dots,K_g)\in\Ur^g$ the operator $T$ of \eqref{eq:defT} is
Fredholm on $L^2(\Gamma,\C^{1\times2r})$ with
\[
  \operatorname{ind}T=0 .
\]
\end{proposition}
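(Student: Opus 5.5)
The plan is to reduce to the Toeplitz operator and apply Theorem~\ref{thm:GK}. By the similarity $T\sim\mathcal B=P+uQ$ with $u=v^T$, and by Lemma~\ref{lem:toeplitzreduction}, $T$ is Fredholm with $\operatorname{ind}T=\operatorname{ind}T(c)$, where $c=(v^{-1})^T$ as in \eqref{eq:symbol}. So it suffices to show that $\det c^{\#}$ never vanishes and has winding number zero. The symbol $c$ is piecewise constant, with possible jumps only at the $2g+2$ points of $\mathcal E$. On $\R\setminus[a_0,b_g]$ (including $\infty$) we have $c=I_{2r}$. On the cuts, $c=(J^{-1})^T=(-J)^T=J$. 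On the gap $\Sigma_j$, $c=\diag(K_j^{-1},K_j)^T=\diag(\overline{K_j},K_j^{T})$. All these values have determinant $1$, since $\det J=1$ and $\det\overline{K_j}\det K_j^T=\overline{\det K_j}\det K_j=1$. Hence the curve $\tau\mapsto\det c(\tau)$ is the constant $1$ away from $\mathcal E$. The whole winding therefore comes from the arcs inserted at the discontinuities.

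At each $e\in\mathcal E$ I compute $\det\bigl((1-\mu)c_1+\mu c_2\bigr)$, where $c_1,c_2$ are the adjacent constant values.

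At $a_0$, the values go from $I$ to $J$. Because $J$ is a block rotation, $(1-\mu)I+\mu J$ is unitarily equivalent to $\diag\bigl(((1-\mu)+i\mu)I_r,((1-\mu)-i\mu)I_r\bigr)$, using the eigenvalues $\pm i$ of $J$. The determinant is therefore $\bigl((1-\mu)^2+\mu^2\bigr)^r$, which is real, positive and nonvanishing. The same holds at $b_g$.

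At an interior endpoint the values go from $J$ to $D:=\diag(A,B)$ with $A=\overline{K_j}$ and $B=K_j^T$, or back. Note that $AB=\overline{K_j}K_j^T=\overline{K_jK_j^*}=I$. I would write
\[
(1-\mu)J+\mu D=\begin{pmatrix}\mu A&(1-\mu)I\\-(1-\mu)I&\mu B\end{pmatrix}.
\]
Since the blocks $-(1-\mu)I$ and $\mu B$ commute, the block determinant formula gives
\[
\det\bigl(\mu^2AB+(1-\mu)^2I\bigr)=\bigl(\mu^2+(1-\mu)^2\bigr)^r>0 .
\]
So every inserted arc is real positive: it starts and ends at $1$ and never leaves $(0,\infty)$.

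Thus $\det c^{\#}$ takes values in $(0,\infty)$ and is never zero. By Theorem~\ref{thm:GK}, $T(c)$ is Fredholm. A closed curve in the positive half-line has winding number $0$ about the origin, so $\operatorname{ind}T(c)=0$ and hence $\operatorname{ind}T=0$.

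The main obstacle I expect is the determinant of the convex combination at the interior endpoints. A naive eigenvalue argument fails because $J$ and $D$ do not commute. The commuting-block determinant trick, together with the unitarity identity $AB=I$, is what makes the computation explicit. One should also check carefully how the transpose and inverse act on the gap blocks; any ordering gives $AB=I$ or $BA=I$, which suffices by the same formula. Finally, the orientation convention of Theorem~\ref{thm:GK} is irrelevant, since the winding number is zero.
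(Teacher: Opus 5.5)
Your proposal is correct and follows essentially the same route as the paper. You reduce to the Toeplitz operator $T(c)$, observe $\det c\equiv 1$ off $\mathcal E$, evaluate the inserted arcs with the commuting-block determinant formula to get $\bigl(\mu^2+(1-\mu)^2\bigr)^r$, and conclude via Theorem~\ref{thm:GK} that the winding number is zero. The only cosmetic differences are that you keep the transpose in $c$ (the paper drops it using $\det X^T=\det X$) and that you treat $a_0,b_g$ by diagonalising $J$. Note also that $AB=I$ holds for any invertible $K_j$, since $(K_j^{-1})^TK_j^T=I$, so unitarity is not actually needed at this step.
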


\begin{proof}
By the discussion above it suffices to prove that $T(c)$ is Fredholm of index
zero, with $c$ as in \eqref{eq:symbol}.  Since $\det X^T=\det X$, we have for
every $\tau$ and $\mu$
\begin{equation}\label{eq:detsymbol}
  \det c^{\#}(\tau,\mu)
  =\det\Bigl[(1-\mu)\,v(\tau-0)^{-1}+\mu\,v(\tau+0)^{-1}\Bigr],
\end{equation}
and we compute the right-hand side at each of the $2g+2$ discontinuities.  We
use repeatedly that
\begin{equation}\label{eq:silvester}
  \det\begin{pmatrix}\mathsf A&\mathsf B\\\mathsf C&\mathsf D\end{pmatrix}
  =\det\bigl(\mathsf{AD}-\mathsf{BC}\bigr)
  \qquad\text{whenever }\mathsf C\mathsf D=\mathsf D\mathsf C,
\end{equation}
and that $J^{-1}=-J$ by \eqref{eq:CLambda}.

Let $\tau=b_{j-1}$, so that $v(\tau-0)=J$ and $v(\tau+0)=D_j$.  Then
\[
  (1-\mu)J^{-1}+\mu D_j^{-1}
  =\begin{pmatrix}\mu K_j^{-1}&-(1-\mu)I_r\\[2pt]
                  (1-\mu)I_r&\mu K_j\end{pmatrix},
\]
whose lower two blocks commute, so that by \eqref{eq:silvester}
\[
  \det c^{\#}(b_{j-1},\mu)
  =\det\Bigl(\mu^2K_j^{-1}K_j+(1-\mu)^2I_r\Bigr)
  =\bigl(\mu^2+(1-\mu)^2\bigr)^{r}.
\]
Let $\tau=a_j$, so that $v(\tau-0)=D_j$ and $v(\tau+0)=J$.  Then
\[
  (1-\mu)D_j^{-1}+\mu J^{-1}
  =\begin{pmatrix}(1-\mu)K_j^{-1}&-\mu I_r\\[2pt]
                  \mu I_r&(1-\mu)K_j\end{pmatrix},
\]
and again the lower blocks commute, so
$\det c^{\#}(a_j,\mu)=\bigl(\mu^2+(1-\mu)^2\bigr)^{r}$.  At $\tau=a_0$ we have
$v(\tau-0)=I_{2r}$, $v(\tau+0)=J$, and
\[
  (1-\mu)I_{2r}+\mu J^{-1}
  =\begin{pmatrix}(1-\mu)I_r&-\mu I_r\\ \mu I_r&(1-\mu)I_r\end{pmatrix},
\]
with determinant $\bigl(\mu^2+(1-\mu)^2\bigr)^{r}$; the computation at
$\tau=b_g$ is the same with $\mu$ replaced by $1-\mu$.  At every point of
continuity $\det c^{\#}(\tau,\mu)=\det v(\tau)^{-1}$, and
\[
  \det J=(-1)^{r^2}\det(I_r)\det(-I_r)=(-1)^{r^2+r}=1,
  \qquad
  \det D_j=\det K_j\,\det K_j^{-1}=1,
\]
so $\det c\equiv1$ on $\Gamma$.  Consequently
\begin{equation}\label{eq:detrange}
  \det c^{\#}(\tau,\mu)=\bigl(\mu^2+(1-\mu)^2\bigr)^{r}
  \in\bigl[2^{-r},1\bigr]
  \qquad\text{for all }\tau\in\Gamma,\ \mu\in[0,1].
\end{equation}
In particular $\det c^{\#}$ never vanishes, so $T(c)$ is Fredholm by Theorem
\ref{thm:GK}.  Moreover the closed curve $\det c^{\#}$ lies entirely in the
interval $[2^{-r},1]$ of the positive real axis, and therefore has winding number $0$ about the
origin.  By Theorem \ref{thm:GK}, $\operatorname{ind}T(c)=0$.
\end{proof}

\section{Reduction to RH problem \ref{def:rhp} and strong asymptotics}\label{subsubsec:reduction}

\subsection{Jacobi type weights}
\label{subsec:jacobi}
 
We carry out the steepest descent steps for the weight of Assumption
\ref{ass:model1} and show that the global problem that remains is RH Problem
\ref{def:rhp}.  The transformations themselves are the several cut version of
\cite[\S3]{DKR23}, and of \cite[\S\S3--5]{KMVV04} in the scalar case; we keep
them brief and concentrate on the point at which the multivaluedness of the
Szeg\H{o} function enters.
 
\subsubsection{The Szeg\H{o} function of the weight}
 
We first check that Theorem \ref{thm:thm:WM} is applicable.  It is convenient
to split off the scalar factor, as is done for $[-1,1]$ in \cite[\S2]{DKR23}.
 
\begin{lemma}\label{lem:jacobiszego}
Let $W$ be as in Assumption \ref{ass:model1}.  Then there is a
$D:\Omega_0\to\C^{r\times r}$, holomorphic and pointwise invertible with
$D(\infty)>0$, satisfying \eqref{eq:band-main} and \eqref{eq:gap-main} with
$M=W$.
\end{lemma}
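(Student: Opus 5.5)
The plan is to verify Assumption~\ref{ass:1} for $M=W=hH$ and then invoke Theorem~\ref{thm:thm:WM}; the conclusions about holomorphy, invertibility, $D(\infty)>0$, \eqref{eq:band-main} and \eqref{eq:gap-main} are then exactly those of that theorem. Everything therefore reduces to two integrability statements with respect to the harmonic measure $\omega_\infty$ of $\Omega$ at $\infty$: $W_{jk}\in L^1(\omega_\infty)$ and $\log\det W\in L^1(\omega_\infty)$.

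First I will record the local structure of $\omega_\infty$ on $E$. For a finite union of intervals it is classical that $d\omega_\infty(x)=\frac{|q(x)|}{\pi|A_0(x)B_0(x)|^{1/2}}\,dx$ with $q$ a real polynomial of degree $g$ having one zero in each gap (equivalently, the normal derivative of the Green function). Hence the density is continuous and positive on $E^\circ$ and behaves like $|x-e|^{-1/2}$ at each $e\in\mathcal E$. If I prefer not to cite the explicit formula, it suffices to note that near each endpoint $\Omega$ is locally conformally a slit, so $z\mapsto(z-e)^{1/2}$ straightens the boundary and gives the same $|x-e|^{-1/2}$ behaviour, while on compact subsets of $E^\circ$ the density is bounded by reflection.

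Second, I will check integrability of the entries. Since $H$ is real analytic on the compact set $E$, it is bounded, so $|W_{jk}|\le C h$. The product $h(x)|x-e|^{-1/2}$ behaves like $|x-e|^{\gamma_e-1/2}$ near $e$. This exponent exceeds $-1$ only when $\gamma_e>-1/2$, so for $-1<\gamma_e\le-1/2$ the naive argument fails. This is the main obstacle I expect. My first approach will be to split off the scalar factor, as the text suggests. I will write $h=|\phi_+|^2$ with $\phi$ the scalar Szeg\H{o} function of $h$ on $\Omega_0$; this is explicit, built from the factors $(z-a_\ell)^{\alpha_\ell/2}$ and $(z-b_\ell)^{\beta_\ell/2}$, and has unimodular jumps on the gaps. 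I will then apply Theorem~\ref{thm:thm:WM} only to $H$ and set $D=\phi D_H$. The scalar unimodular gap jumps of $\phi$ are absorbed into $U_j$, and $D(\infty)>0$ after normalising $\phi(\infty)>0$. For $H$ itself, entries are bounded and $\omega_\infty$ is finite, so $H_{jk}\in L^1(\omega_\infty)$ holds trivially.

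Third, I will handle $\log\det H$. On $E^\circ$ it is continuous. By (d) and (e), $\det H$ extends analytically near each $e$ and is not identically zero, so $\det H(x)\sim c|x-e|^{m}$ with $m\ge0$ an integer. Hence $\log\det H=O(\log|x-e|)$, which is integrable against $|x-e|^{-1/2}\,dx$. This verifies Assumption~\ref{ass:1} for $H$. Combining the result with $\phi$ gives the desired $D$ for $W$, and the pointwise invertibility of $D$ follows because $\phi$ is nonvanishing on $\Omega_0$.
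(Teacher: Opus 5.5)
Your decomposition is the one the paper uses. You correctly see that Assumption~\ref{ass:1} can fail for $W$ itself: since $H(e)$ is nonnegative definite and nonzero, some $H_{jj}(e)>0$, so $W_{jj}\notin L^1(\omega_\infty)$ whenever $\gamma_e\le-\tfrac12$. You apply Theorem~\ref{thm:thm:WM} only to $H$, with the same integrability check for $\log\det H$ as the paper. You then multiply by a scalar Szeg\H{o} function of $h$ and absorb its unimodular gap phases into the $U_j$. (Small slip: $\det H\not\equiv0$ near $e$ follows from (c), not from (e).)

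The one step that does not hold as written is the construction of the scalar factor. The product $\prod_\ell(z-a_\ell)^{\alpha_\ell/2}(z-b_\ell)^{\beta_\ell/2}$ does have modulus $h^{1/2}$ on $E$ and unimodular jumps on the gaps. But it behaves like $z^{\sigma}$ at infinity, with $\sigma:=\tfrac12\sum_\ell(\alpha_\ell+\beta_\ell)$. So it is not single-valued near $\infty$ unless $\sigma\in\mathbb Z$, and not finite and nonzero there unless $\sigma=0$. Without that, $\phi(\infty)>0$, and hence $D(\infty)>0$, are not available.

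What is missing is a factor of modulus one on $E$ that grows like $z$, namely the exponential of the complex Green function. With $g$ and $\ell$ as in \eqref{eq:jacobig}, take $\phi(z):=e^{-\sigma(g(z)-\ell/2)}\prod_\ell(z-a_\ell)^{\alpha_\ell/2}(z-b_\ell)^{\beta_\ell/2}$ with principal branches. Use three facts: $\operatorname{Re}g_\pm=\ell/2$ on $E$, $g_+-g_-=2\pi i$ on $(-\infty,a_0)$, and $g_+-g_-$ is a constant in $i\R$ on each gap. Then $\phi$ is holomorphic and zero-free on $\Omega_0$, satisfies $|\phi_\pm|^2=h$ on $E$, has constant unimodular jumps on the gaps, and $\phi(\infty)=\operatorname{cap}(E)^{\sigma}>0$.

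The paper avoids this by rerunning the proof of Theorem~\ref{thm:thm:WM} with $r=1$. It lifts $h$ to $\T$ through the covering map and takes the scalar outer function $G_h$. That only needs $\log h\in L^1(\omega_\infty)$, which holds because $\log h$ has only logarithmic singularities. Deck invariance then gives the unimodular gap jumps. The paper's route is uniform with the matrix case and needs no potential theory. Your corrected explicit route gives more: the gap phases explicitly, and the endpoint behaviour $|\phi(z)|\asymp|z-e|^{\gamma_e/2}$.
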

 
\begin{proof}
Consider first $H$.  Its entries are bounded on $E$ and so lie in
$L^1(\partial\Omega,\omega_\infty)$.  By Assumption \ref{ass:model1}(c),(d) the
function $\det H$ is holomorphic near $E$, positive on $E^\circ$ and not
identically zero, so at each $e\in\mathcal E$ we may write
$\det H(x)=|x-e|^{\kappa_e}\rho_e(x)$ with $\kappa_e\ge0$ and $\rho_e$
bounded away from $0$; hence $|\log\det H(x)|\le C\bigl(1+|\log|x-e||\bigr)$
near $e$.  Since $\omega_\infty$ has a density comparable to $|x-e|^{-1/2}$ at
each endpoint, $\log\det H\in L^1(\partial\Omega,\omega_\infty)$.  Thus
Assumption \ref{ass:1} holds for $H$, and Theorem \ref{thm:thm:WM} produces
$D_H$ with $H=D_{H\pm}D_{H\pm}^*$ on $E$ and $D_{H+}=D_{H-}U_j^H$ on $\Sigma_j$,
with $U_j^H\in\Ur$.
 
For the scalar factor $h$ the same computation gives
$\log h\in L^1(\partial\Omega,\omega_\infty)$, and the proof of Theorem
\ref{thm:thm:WM} applies verbatim with $r=1$, with Theorem \ref{thm:WM}
replaced by the classical scalar factorisation on the disc: the outer function
\[
  G_h(w)=\exp\Bigl(\tfrac12\int_{\T}\frac{\tau+w}{\tau-w}
    \log\bigl(h\circ p^*\bigr)(\tau)\,dm(\tau)\Bigr),
  \qquad w\in\D,
\]
is well defined and zero free as soon as $\log(h\circ p^*)\in L^1(m)$, it
satisfies $|G_h^*|^2=h\circ p^*$ almost everywhere on $\T$, and it is unique up
to a unimodular constant.  Invariance of $h\circ p^*$ under the deck group
therefore gives $G_h\circ\gamma=u_\gamma G_h$ with $|u_\gamma|=1$, and
$D_h:=G_h\circ s$ satisfies $h=|D_{h\pm}|^2$ on $E$ and $D_{h+}=u_jD_{h-}$ on
$\Sigma_j$ with $|u_j|=1$.
 
Since $D_h$ is scalar it commutes, so $D_hD_H$ satisfies
$W=hH=D_\pm D_\pm^*$ and $D_+=D_-U_j$ with $U_j=u_jU_j^H\in\Ur$.  Multiplying on
the right by a constant unitary matrix as in Lemma \ref{lem:canonical-normalization} we may
arrange $D(\infty)>0$.
\end{proof}

\subsubsection{Transformation with \texorpdfstring{$g$}{g} function}
 
Let $\mu_E$ be the equilibrium measure of the compact set $E$.  It coincides
with the harmonic measure $\omega_\infty$ of $\Omega$ at $\infty$ that appears
in Assumption \ref{ass:1}.  Put
\begin{equation}\label{eq:jacobig}
  g(z)=\int\log(z-s)\,d\mu_E(s),
  \qquad z\in\C\setminus(-\infty,b_g],
\end{equation}
with principal branches, and $\ell=2\log\operatorname{cap}(E)$.  We shall use the
following standard properties, see \cite[Ch.~I]{ST97}.
 
\begin{lemma}\label{lem:jacobig}
\begin{enumerate}
\item[(i)] $g(z)=\log z+O(z^{-1})$ as $z\to\infty$.
\item[(ii)] $g_++g_-=\ell$ on $E$.
\item[(iii)] On $\Sigma_j$ the difference $g_+-g_-$ is the constant $i\omega_j$,
  where
  \begin{equation}\label{eq:jacobiomega}
    \omega_j:=2\pi\,\omega_\infty\bigl([a_j,b_g]\bigr)
    =2\pi\sum_{\ell=j}^{g}\omega_\infty\bigl([a_\ell,b_\ell]\bigr)\in(0,2\pi),
    \qquad j=1,\dots,g.
  \end{equation}
  Moreover $g_+-g_-=2\pi i$ on $(-\infty,a_0)$ and $g_+=g_-$ on $(b_g,\infty)$.
\item[(iv)] $\ell-2\operatorname{Re}g(z)=-2\,g_\Omega(z,\infty)<0$ for
  $z\in\Omega$, where $g_\Omega(\cdot,\infty)$ is the Green function of $\Omega$
  with pole at infinity.
\end{enumerate}
\end{lemma}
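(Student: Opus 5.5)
The plan is to derive all four items from the boundary values of $g$ on $\R$. The inputs are Frostman's theorem for the equilibrium measure $\mu_E$ and the identification $\mu_E=\omega_\infty$ already recalled in the text; see \cite[Ch.~I]{ST97}. Write
\[
U(z):=\int\log\frac{1}{|z-s|}\,d\mu_E(s)
\]
for the logarithmic potential, so that $\operatorname{Re}g=-U$ on $\C\setminus(-\infty,b_g]$.

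For (i), I write $\log(z-s)=\log z+\log(1-s/z)$ for $|z|$ large and $z$ off the negative axis. Since $\mu_E$ is a probability measure with compact support, integrating gives $g(z)=\log z+O(z^{-1})$ uniformly.

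For (ii) and (iii), I compute the boundary values for $x\in\R$. With the principal branch, $\arg(x-s)\to\pm\pi$ when $s>x$ and $z\to x$ from $\C_\pm$, while $\arg(x-s)\to0$ when $s<x$. This gives
\[
g_\pm(x)=-U(x)\pm i\pi\,\mu_E\bigl((x,b_g]\bigr).
\]
The passage to the limit is justified by dominated convergence. This uses that $\mu_E$ has density comparable to $|x-e|^{-1/2}$ at the endpoints and is bounded in the interior, so it has no atoms and $U$ is continuous on $\R$.

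Adding the two boundary values gives $g_++g_-=-2U$. Frostman's theorem says $U=\log(1/\operatorname{cap}E)$ on $E$, and this holds at every point of $E$, not merely quasi-everywhere, because a finite union of nondegenerate intervals is regular for the Dirichlet problem. Hence $g_++g_-=2\log\operatorname{cap}(E)=\ell$ on $E$.

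Subtracting gives $g_+-g_-=2\pi i\,\mu_E((x,b_g])$. On $\Sigma_j$ this equals $2\pi i\,\mu_E([a_j,b_g])=i\omega_j$, because $\mu_E$ does not charge the gaps. Since each cut carries positive mass, $\omega_j\in(0,2\pi)$. For $x<a_0$ the difference is $2\pi i$, and for $x>b_g$ it is $0$. The first fact shows that $e^{g}$ is single valued near $(-\infty,a_0)$, and the second that $g$ is analytic across $(b_g,\infty)$.

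For (iv), I set $G(z):=\operatorname{Re}g(z)-\log\operatorname{cap}(E)=\log(1/\operatorname{cap}E)-U(z)$. This function is harmonic on $\Omega\setminus\{\infty\}$ and equals $\log|z|+O(1)$ at infinity by (i). By Frostman and regularity it extends continuously by $0$ to $E$. It is therefore the Green function $g_\Omega(\cdot,\infty)$, and consequently $\ell-2\operatorname{Re}g=-2G=-2g_\Omega(\cdot,\infty)$. Strict positivity of $g_\Omega$ on $\Omega$ follows from the minimum principle. The function is superharmonic on the sphere minus $E$, tends to $+\infty$ at infinity, and has zero boundary values, so it is $\ge0$. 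It cannot vanish at an interior point, since it is nonconstant.

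The only delicate step is the pointwise validity of Frostman's equality together with the existence of continuous boundary values. Both rest on the regularity of $E$ and the square-root behaviour of $\mu_E$ at $\mathcal E$. I would cite these rather than reprove them.
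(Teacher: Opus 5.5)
Your proposal is correct and follows the paper's route. The paper likewise reads off $g_+-g_-=2\pi i\,\mu_E\bigl((x,\infty)\bigr)$ from the boundary values of the principal logarithm, and it obtains (ii) and (iv) from Frostman's theorem together with $g_\Omega(z,\infty)=\log\frac{1}{\operatorname{cap}(E)}-U^{\mu_E}(z)$, citing Saff--Totik. You supply the details the paper leaves to that citation: dominated convergence, pointwise Frostman equality via regularity of $E$, and the minimum principle for strict positivity.
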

 
Statement (iii) is immediate from $\log_\pm(x-s)=\log|x-s|\pm i\pi$ for $s>x$,
which gives $g_+-g_-=2\pi i\,\mu_E\bigl((x,\infty)\bigr)$, a constant on each
gap since $\mu_E$ puts no mass there.  Statement (iv) is Frostman's theorem
together with $g_\Omega(z,\infty)=\log\frac1{\operatorname{cap}(E)}-U^{\mu_E}(z)$.
 
The first transformation is
\begin{equation}\label{eq:jacobiT}
  T(z)=e^{-\frac{n\ell}{2}\sigma_3}\,Y(z)\,
       e^{-n\left(g(z)-\frac{\ell}{2}\right)\sigma_3},
\end{equation}
with $Y$ the solution of RH Problem \ref{rhp:model1}.
 
\begin{lemma}\label{lem:jacobiTjump}
$T$ is analytic in $\C\setminus[a_0,b_g]$, $T(z)=I_{2r}+O(z^{-1})$ as
$z\to\infty$, it has the same behavior as $Y$ at the endpoints, and
\begin{equation}\label{eq:jacobiTjumps}
  T_+=T_-\times
  \begin{cases}
    \begin{pmatrix}e^{-n(g_+-g_-)}I_r&W\\0_r&e^{n(g_+-g_-)}I_r\end{pmatrix},
      &x\in(a_\ell,b_\ell),\quad\ell=0,\dots,g,\\[14pt]
    e^{-in\omega_j\sigma_3},&x\in\Sigma_j,\quad j=1,\dots,g.
  \end{cases}
\end{equation}
\end{lemma}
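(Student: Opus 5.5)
The plan is to compute directly from \eqref{eq:jacobiT} and the jump \eqref{eq:model1jump} of $Y$, using the properties of $g$ collected in Lemma \ref{lem:jacobig}. The analyticity of $T$ off $[a_0,b_g]$ is handled by a branch-cut observation. $Y$ is analytic off $E$, while $g$ is analytic off $(-\infty,b_g]$. On $(-\infty,a_0)$ Lemma \ref{lem:jacobig}(iii) gives $g_+-g_-=2\pi i$. Hence $e^{-ng\sigma_3}$ has jump $e^{-2\pi i n\sigma_3}=I_{2r}$ there, because $n$ is an integer. So $T$ extends analytically across $(-\infty,a_0)$, and likewise across $(b_g,\infty)$ where $g_+=g_-$.

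For the normalisation at infinity, Lemma \ref{lem:jacobig}(i) gives $e^{-ng\sigma_3}=z^{-n\sigma_3}(I_{2r}+O(z^{-1}))$, the factor being diagonal. Combining this with \eqref{eq:model1infty}, the left constant factor $e^{-n\ell\sigma_3/2}$ and the right constant factor $e^{n\ell\sigma_3/2}$ conjugate $I_{2r}+O(z^{-1})$ to a matrix of the same form. This yields $T=I_{2r}+O(z^{-1})$. The endpoint behaviour is unchanged because $e^{\pm ng}$ is bounded and bounded away from zero near each $e\in\mathcal E$; indeed $g$ is continuous up to $E$ since $\mu_E$ has density $O(|x-e|^{-1/2})$.

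The jumps come from $T_+=T_-\,e^{n(g_--\ell/2)\sigma_3}\,v_Y\,e^{-n(g_+-\ell/2)\sigma_3}$, where $v_Y$ is the jump of $Y$. On a gap $\Sigma_j$ we have $v_Y=I_{2r}$, so the jump is $e^{-n(g_+-g_-)\sigma_3}=e^{-in\omega_j\sigma_3}$ by (iii). On a cut we have $v_Y=\begin{psmallmatrix}I_r&W\\0&I_r\end{psmallmatrix}$. The diagonal entries become $e^{\mp n(g_+-g_-)}$. The $(1,2)$ block becomes $W e^{n(g_++g_--\ell)}$, which equals $W$ by Lemma \ref{lem:jacobig}(ii).

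The only genuinely delicate point is the continuity of $g$ at the endpoints, which justifies the claim that the endpoint behaviour is unchanged; the rest is mechanical conjugation by diagonal scalar matrices, which commute with all $r\times r$ blocks.
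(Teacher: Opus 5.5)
Your proposal is correct and follows the paper's proof. It uses the same steps: write $Y$ in terms of $T$ and substitute into the jump of $Y$, apply Lemma~\ref{lem:jacobig}(ii),(iii) on the cuts and gaps, use the $2\pi i$ jump on $(-\infty,a_0)$ together with the integrality of $n$ for analyticity, and match at infinity via Lemma~\ref{lem:jacobig}(i). Your justification that $e^{\pm ng}$ is bounded and bounded away from zero near each $e\in\mathcal E$ supplies the endpoint claim, which the paper states without argument.
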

 
\begin{proof}
Substituting $Y_-=e^{\frac{n\ell}{2}\sigma_3}T_-
e^{n(g_--\frac{\ell}{2})\sigma_3}$ into \eqref{eq:model1jump} gives
\[
  T_+=T_-\begin{pmatrix}e^{-n(g_+-g_-)}I_r&e^{n(g_++g_--\ell)}W\\
                        0_r&e^{n(g_+-g_-)}I_r\end{pmatrix}
\]
on a cut, and the exponent in the upper right corner vanishes by Lemma
\ref{lem:jacobig}(ii).  On $\Sigma_j$ the matrix $Y$ has no jump, so
$T_+=T_-e^{-n(g_+-g_-)\sigma_3}=T_-e^{-in\omega_j\sigma_3}$ by Lemma
\ref{lem:jacobig}(iii).  The same computation on $(-\infty,a_0)$ gives the
factor $e^{-2\pi in\sigma_3}=I_{2r}$, and on $(b_g,\infty)$ it gives $I_{2r}$;
hence $T$ is analytic across $\R\setminus[a_0,b_g]$.  Finally
$e^{-n(g(z)-\ell/2)\sigma_3}z^{n\sigma_3}=e^{n\ell\sigma_3/2}(I_{2r}+O(z^{-1}))$
as $z\to\infty$ by Lemma \ref{lem:jacobig}(i), which together with
\eqref{eq:model1infty} gives the normalisation.
\end{proof}
 
\subsubsection{Opening of lenses}
 
Put
\begin{equation}\label{eq:jacobiphi}
  \phi(z)=\ell-2g(z),
\end{equation}
which is analytic in $\C\setminus(-\infty,b_g]$ and in particular in
$\C\setminus\R$.  By Lemma \ref{lem:jacobig}(ii) we have $\phi_++\phi_-=0$ on
each cut, so that $\phi_\pm$ is purely imaginary there and
$\phi_+=-(g_+-g_-)$; and by Lemma \ref{lem:jacobig}(iv),
\begin{equation}\label{eq:jacobiRephi}
  \operatorname{Re}\phi(z)=-2\,g_\Omega(z,\infty)<0,
  \qquad z\in\Omega .
\end{equation}
Using $e^{n\phi_+}e^{n\phi_-}=1$ we factor the jump matrix on a cut as
\begin{equation}\label{eq:jacobifactorisation}
  \begin{pmatrix}e^{n\phi_+}I_r&W\\0_r&e^{n\phi_-}I_r\end{pmatrix}
  =\begin{pmatrix}I_r&0_r\\W^{-1}e^{n\phi_-}&I_r\end{pmatrix}
   \begin{pmatrix}0_r&W\\-W^{-1}&0_r\end{pmatrix}
   \begin{pmatrix}I_r&0_r\\W^{-1}e^{n\phi_+}&I_r\end{pmatrix},
\end{equation}
open a lens around each cut, and set
\begin{equation}\label{eq:jacobiS}
  S(z)=T(z)\times
  \begin{cases}
    \begin{pmatrix}I_r&0_r\\-W(z)^{-1}e^{n\phi(z)}&I_r\end{pmatrix},
      &\text{in the upper parts of the lenses},\\[10pt]
    \begin{pmatrix}I_r&0_r\\ \ \ W(z)^{-1}e^{n\phi(z)}&I_r\end{pmatrix},
      &\text{in the lower parts of the lenses},\\[10pt]
    I_{2r},&\text{elsewhere}.
  \end{cases}
\end{equation}
Here $W$ denotes the analytic continuation of the weight off the cut, obtained
by continuing each factor of \eqref{eq:model1weight}: on a neighborhood
$\mathcal U_\ell$ of $[a_\ell,b_\ell]$ meeting no other cut, the product
$\prod_{k\ne\ell}|z-a_k|^{\alpha_k}|z-b_k|^{\beta_k}$ is positive and real
analytic on $\mathcal U_\ell\cap\R$ and so continues analytically, while
$(z-a_\ell)^{\alpha_\ell}(b_\ell-z)^{\beta_\ell}$ is analytic on
$\mathcal U_\ell\setminus\bigl(\R\setminus(a_\ell,b_\ell)\bigr)$ with principal
branches.  This is the several cut version of \cite[\S3.3]{DKR23}.
 
By \eqref{eq:jacobifactorisation} the matrix $S$ has the constant jump
$\begin{psmallmatrix}0_r&W\\-W^{-1}&0_r\end{psmallmatrix}$ on $E^\circ$, the
jump $e^{-in\omega_j\sigma_3}$ on $\Sigma_j$, and by \eqref{eq:jacobiRephi} the
jumps on the boundary of the lenses are $I_{2r}+O(e^{-cn})$ for some $c>0$,
uniformly on compact subsets of the boundary away from $\mathcal E$.  We note that
\eqref{eq:jacobiRephi} holds on all of $\Omega$, so that the width of the lenses
is restricted only by the domain of analyticity of $H$.
 
\subsubsection{The global problem and its reduction}
 
Discarding the jumps that tend to the identity leads to the following problem.
 
\begin{rhp}\label{rhp:jacobiouter}
Let $n\ge1$.  We seek $\mathcal N=\mathcal N(\cdot\,;n)$ such that
\begin{enumerate}
\item $\mathcal N:\C\setminus[a_0,b_g]\to\C^{2r\times2r}$ is analytic.
\item On each cut,
  \begin{equation}\label{eq:jacobiouterband}
    \mathcal N_+=\mathcal N_-
    \begin{pmatrix}0_r&W\\-W^{-1}&0_r\end{pmatrix},
    \qquad x\in(a_\ell,b_\ell),\quad\ell=0,\dots,g.
  \end{equation}
\item On each gap,
  \begin{equation}\label{eq:jacobioutergap}
    \mathcal N_+=\mathcal N_-\,e^{-in\omega_j\sigma_3},
    \qquad x\in\Sigma_j,\quad j=1,\dots,g.
  \end{equation}
\item $\mathcal N(z)=I_{2r}+O(z^{-1})$ as $z\to\infty$.
\item $\mathcal N(z)\mathsf D(z)^{-1}=O\bigl(|z-e|^{-1/4}\bigr)$ as $z\to e$,
  $e\in\mathcal E$, with $\mathsf D$ as in \eqref{eq:jacobiconjugators} below.
\end{enumerate}
\end{rhp}
 
Let $D$ be the Szeg\H{o} function of Lemma \ref{lem:jacobiszego} and set
\begin{equation}\label{eq:jacobisharp}
  D^\sharp(z):=D(\bar z)^*,\qquad z\in\Omega_0 .
\end{equation}
As it involves two anti-holomorphic conjugations and $\Omega_0$ is invariant
under complex conjugation, $D^\sharp$ is again holomorphic and pointwise
invertible on $\Omega_0$, with boundary values
\begin{equation}\label{eq:jacobisharpbv}
  D^\sharp_+(x)=D_-(x)^*,\qquad D^\sharp_-(x)=D_+(x)^*,
  \qquad x\in(a_0,b_g).
\end{equation}
Writing $X^{-*}=(X^{-1})^*$ we put
\begin{equation}\label{eq:jacobiconjugators}
  \mathsf D(z)=\begin{pmatrix}D(z)^{-1}&0_r\\0_r&D^\sharp(z)\end{pmatrix},
  \qquad
  \mathsf D_\infty=\begin{pmatrix}D(\infty)&0_r\\0_r&D(\infty)^{-*}\end{pmatrix}.
\end{equation}
 
\begin{proposition}\label{prop:jacobireduction}
Let $U_1,\dots,U_g\in\Ur$ be as in \eqref{eq:gap-main} for the weight $W$, let
$\omega_1,\dots,\omega_g$ be as in \eqref{eq:jacobiomega}, and put
\begin{equation}\label{eq:jacobiK}
  K_j=e^{-in\omega_j}U_j\in\Ur,\qquad j=1,\dots,g.
\end{equation}
If $N$ solves RH Problem \ref{def:rhp} with these data, then
\begin{equation}\label{eq:jacobiglobal}
  \mathcal N(z)=\mathsf D_\infty\,N(z)\,\mathsf D(z)
\end{equation}
solves RH Problem \ref{rhp:jacobiouter}; conversely every solution of RH Problem
\ref{rhp:jacobiouter} is of this form.  In particular RH Problem
\ref{rhp:jacobiouter} has exactly one solution, by Propositions
\ref{prop:Nsolves} and \ref{prop:unique}.
\end{proposition}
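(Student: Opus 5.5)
The plan is a direct conjugation argument: I check each condition of RH Problem \ref{rhp:jacobiouter} for $\mathcal N=\mathsf D_\infty N\mathsf D$, then invert the relation for the converse.

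First, boundary values. On each open cut $(a_\ell,b_\ell)$ the weight $W=hH$ is real analytic and positive definite. By Lemma \ref{lem:Dboundary}, $D_\pm$ are continuous there and $W=D_\pm D_\pm^*$ holds pointwise, so the jump relations can be checked pointwise rather than only almost everywhere. The same applies to $D^\sharp_\pm$ by \eqref{eq:jacobisharpbv}. On the gaps $D$ is holomorphic on both sides, and \eqref{eq:gap-main} holds by Theorem \ref{thm:thm:WM}.

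Second, the cut jump. From $N_+=N_-J$ we get $\mathcal N_+=\mathcal N_-\,\mathsf D_-^{-1}J\,\mathsf D_+$. Multiplying out the blocks, with \eqref{eq:jacobisharpbv},
\[
\mathsf D_-^{-1}J\mathsf D_+=\begin{pmatrix}0_r&D_-D^\sharp_+\\-(D^\sharp_-)^{-1}D_+^{-1}&0_r\end{pmatrix}
=\begin{pmatrix}0_r&D_-D_-^*\\-(D_+D_+^*)^{-1}&0_r\end{pmatrix}
=\begin{pmatrix}0_r&W\\-W^{-1}&0_r\end{pmatrix}.
\]

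Third, the gap jump. On $\Sigma_j$ we have $D_+=D_-U_j$, and therefore $D^\sharp_+=D_-^*=(D_+U_j^{-1})^*=U_jD^\sharp_-$. This gives $\mathsf D_+=\diag(U_j^{-1},U_j)\,\mathsf D_-$, and hence
\[
\mathsf D_-^{-1}\diag(K_j,K_j^{-1})\mathsf D_+=\mathsf D_-^{-1}\diag(K_jU_j^{-1},K_j^{-1}U_j)\mathsf D_-.
\]
With the choice \eqref{eq:jacobiK}, the middle factor is the scalar block matrix $e^{-in\omega_j\sigma_3}$. It commutes with the block-diagonal $\mathsf D_-$, which yields \eqref{eq:jacobioutergap}. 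Note that $e^{-in\omega_j}U_j$ is unitary, so $K_j\in\Ur$.

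Fourth, the remaining conditions. $\mathsf D$ is holomorphic at $\infty$ with $\mathsf D(\infty)=\diag(D(\infty)^{-1},D(\infty)^*)=\mathsf D_\infty^{-1}$, since $D(\infty)>0$. Combined with (N3), this gives $\mathcal N=I_{2r}+O(z^{-1})$. At an endpoint, $\mathcal N\mathsf D^{-1}=\mathsf D_\infty N=O(|z-e|^{-1/4})$ by (N4).

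For the converse, given a solution $\mathcal N$, I set $N:=\mathsf D_\infty^{-1}\mathcal N\mathsf D^{-1}$. The same identities read backwards give (N0)--(N4), since $\mathsf D$ is invertible and holomorphic on $\Omega_0$. By Morera, $N$ has no jump outside $[a_0,b_g]$. Existence and uniqueness of $\mathcal N$ then follow from Propositions \ref{prop:Nsolves} and \ref{prop:unique}.

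The main obstacle is the regularity of the boundary values of $D$ on the cuts. The theorem gives them only almost everywhere, so I rely on Lemma \ref{lem:Dboundary} on each open cut. At the endpoints no continuity of $D$ is needed, because condition 5 of RH Problem \ref{rhp:jacobiouter} is stated for $\mathcal N\mathsf D^{-1}$ exactly so as to avoid it.
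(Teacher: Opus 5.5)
Your proof is correct and is essentially the paper's argument. Both conjugate by $\mathsf D_\infty$ and $\mathsf D$ and compute $\mathsf D_-^{-1}(N_-^{-1}N_+)\mathsf D_+$ blockwise on the cuts and gaps, using \eqref{eq:jacobisharpbv}, \eqref{eq:band-main} and $D_+=D_-U_j$. Both get the normalisation from $\mathsf D(\infty)=\mathsf D_\infty^{-1}$, the endpoint condition from $\mathcal N\mathsf D^{-1}=\mathsf D_\infty N$, and the converse by reversing the steps.

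Your extra appeal to Lemma~\ref{lem:Dboundary}, which makes the cut jumps hold pointwise, is a refinement the paper skips. Strictly, it should be applied to the factors $D_h$ and $D_H$ of Lemma~\ref{lem:jacobiszego} separately, because Assumption~\ref{ass:1} fails for $W$ itself when some $\gamma_e\le-1/2$.
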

 
\begin{proof}
Each of $N$, $D^{-1}$ and $D^\sharp$ is analytic in $\C\setminus[a_0,b_g]$, and
so is $\mathcal N$; item 5 is (N4).  For item 4, $D$ is holomorphic at
$\infty\in\Omega_0$, so $D(z)=D(\infty)+O(z^{-1})$ and
$D^\sharp(z)=D(\infty)^*+O(z^{-1})$, whence
\[
  \mathsf D_\infty\mathsf D(z)
  =\begin{pmatrix}D(\infty)D(\infty)^{-1}&0_r\\
                  0_r&D(\infty)^{-*}D(\infty)^*\end{pmatrix}+O(z^{-1})
  =I_{2r}+O(z^{-1}),
\]
and $N(z)=I_{2r}+O(z^{-1})$ by (N3).
 
For the jumps we compute, using \eqref{eq:jacobiglobal} and
\eqref{eq:jacobisharpbv},
\[
  \mathcal N_-^{-1}\mathcal N_+
  =\mathsf D_-^{-1}\bigl(N_-^{-1}N_+\bigr)\mathsf D_+,
  \qquad
  \mathsf D_-^{-1}=\begin{pmatrix}D_-&0_r\\0_r&D_+^{-*}\end{pmatrix},
  \qquad
  \mathsf D_+=\begin{pmatrix}D_+^{-1}&0_r\\0_r&D_-^*\end{pmatrix}.
\]
On a cut, $N_-^{-1}N_+=\begin{psmallmatrix}0_r&I_r\\-I_r&0_r\end{psmallmatrix}$
by (N1), and therefore
\[
  \mathcal N_-^{-1}\mathcal N_+
  =\begin{pmatrix}D_-&0_r\\0_r&D_+^{-*}\end{pmatrix}
   \begin{pmatrix}0_r&I_r\\-I_r&0_r\end{pmatrix}
   \begin{pmatrix}D_+^{-1}&0_r\\0_r&D_-^*\end{pmatrix}
  =\begin{pmatrix}0_r&D_-D_-^*\\-D_+^{-*}D_+^{-1}&0_r\end{pmatrix},
\]
which is $\begin{psmallmatrix}0_r&W\\-W^{-1}&0_r\end{psmallmatrix}$ by
\eqref{eq:band-main}, since $D_-D_-^*=W$ and
$D_+^{-*}D_+^{-1}=(D_+D_+^*)^{-1}=W^{-1}$.  This is
\eqref{eq:jacobiouterband}.
 
On $\Sigma_j$, $N_-^{-1}N_+=\operatorname{diag}(K_j,K_j^{-1})$ by (N2), so
\[
  \mathcal N_-^{-1}\mathcal N_+
  =\begin{pmatrix}D_-K_jD_+^{-1}&0_r\\0_r&D_+^{-*}K_j^{-1}D_-^*\end{pmatrix}.
\]
By \eqref{eq:gap-main} we have $D_+=D_-U_j$, hence $D_+^{-1}=U_j^{-1}D_-^{-1}$
and $D_+^{-*}=D_-^{-*}U_j$, so that with \eqref{eq:jacobiK} the two diagonal
blocks are
\[
  D_-K_jU_j^{-1}D_-^{-1}=e^{-in\omega_j}I_r,
  \qquad
  D_-^{-*}U_jK_j^{-1}D_-^*=e^{in\omega_j}I_r .
\]
This is \eqref{eq:jacobioutergap}.  All the steps are reversible, which gives
the converse.
\end{proof}

Proposition \ref{prop:jacobireduction} is the global part of the analysis.  What
remains is local, and is the one cut analysis repeated $2g+2$ times.
\paragraph{Local parametrices.}
Choose pairwise disjoint discs $B_e$ around the endpoints
$e\in\mathcal E$, sufficiently small. Let $\Sigma_S$ denote the jump
contour of $S$. In $B_e$ we seek a local parametrix $P_e$ satisfying
the following RH problem.
\begin{rhp}

\begin{enumerate}
\item
$P_e$ is analytic in $B_e\setminus\Sigma_S$.

\item
$P_e$ has the same jumps as $S$ in $B_e$. Thus,
\[
(P_e)_+=(P_e)_-
\begin{pmatrix}
0_r&W\\
-W^{-1}&0_r
\end{pmatrix}
\]
on the part of the adjacent cut contained in $B_e$, and
\[
(P_e)_+=(P_e)_-
\begin{pmatrix}
I_r&0_r\\
e^{n\phi}W^{-1}&I_r
\end{pmatrix}
\]
on the boundary of the lens in $B_e$. If $e$ is an internal endpoint
adjacent to the gap $\Sigma_j$, then in addition
\[
(P_e)_+=(P_e)_-e^{-in\omega_j\sigma_3}
\qquad\text{on }B_e\cap\Sigma_j .
\]

\item
As $n\to\infty$,
\[
P_e(z)\mathcal N(z)^{-1}
   =I_{2r}+O(n^{-1})
\]
uniformly for $z\in\partial B_e\setminus\Sigma_S$.

\item
The matrix $S(z)P_e(z)^{-1}$ remains bounded as $z\to e$.
\end{enumerate}
\end{rhp}
We reduce this problem to the one considered in
\cite[\S3.5]{DKR23}. Let $V_e$ be the local factor of the weight
defined as in \cite[\S3.5.2]{DKR23}, and seek $P_e$ in the form
\begin{equation}\label{eq:localPe}
P_e(z)=E_{e,n}(z)P_e^{(1)}(z)
\begin{pmatrix}
e^{n\phi(z)/2}V_e(z)^{-1}&0_r\\
0_r&e^{-n\phi(z)/2}V_e^\sharp(z)
\end{pmatrix},
\end{equation}
where $E_{e,n}$ is analytic and invertible in $B_e$.

The calculation in \cite[Lemma 3.5]{DKR23} applies without change on
the adjacent cut and on the boundary of the lens. At an 
endpoint, the last factor in \eqref{eq:localPe} has the jump
$e^{-in\omega_j\sigma_3}$ on $\Sigma_j$, since
$\phi_+-\phi_-=-2i\omega_j$. Hence $P_e^{(1)}$ has no jump on the
adjacent gap. It follows that $P_e^{(1)}$ satisfies exactly the
constant jump conditions in \cite[(3.30)]{DKR23}, with Bessel orders
\[
\nu_{e,k}
 =\gamma_e+\operatorname{ord}_e\lambda_{e,k},
\qquad k=1,\ldots,r,
\]
where $\lambda_{e,1},\ldots,\lambda_{e,r}$ are the local analytic
eigenvalues of $H$.

With these local parametrices, the final transformation is a
small-norm RH problem, exactly as in \cite[\S3.8]{DKR23}. Hence by the same strategy,
\[
R(z)=I_{2r}+O\!\left(\frac{1}{n(1+|z|)}\right)
\]
uniformly away from its jump contour. Reversing
\eqref{eq:jacobiT} and \eqref{eq:jacobiS} gives, uniformly on compact
subsets of $\C\setminus [a_0,b_g]$,
\begin{equation}\label{eq:MVOP1}
e^{-ng(z)}P_n(z)
 =D(\infty)\cc(z)
  F_1\bigl(z;K_1,\ldots,K_g\bigr)D(z)^{-1}
  +O(n^{-1}).
\end{equation}

 \subsection{Varying exponential weights}
\label{subsec:varying}
 
We now do the same for the weight of Assumption \ref{ass:model2}.  The
transformations are those of \cite[\S5]{DR25} in the one cut case and of
\cite{DKMVZ99b} in the scalar several cut case.  Two things change with respect
to Section \ref{subsec:jacobi}: the set $E$ is not the set of orthogonality but
the support of $\mu_V$, so that jumps survive on $\R\setminus E$, and the
inequality that makes the lenses work is only local.  The conjugation by the
Szeg\H{o} function, on the other hand, is identical.
 
\subsubsection{The Szeg\H{o} function of the matrix part}

\begin{lemma}\label{lem:varyingszego}
Let $M$ be as in Assumption \ref{ass:model2}.  Then Assumption \ref{ass:1} is
satisfied by $M|_E$, so that Theorem \ref{thm:thm:WM} provides a matrix
Szeg\H{o} function $D$ with $D(\infty)>0$, unitary matrices $U_1,\dots,U_g$ as
in \eqref{eq:gap-main}, and moreover $D$ and $D^{-1}$ are bounded on $\Omega_0$.
\end{lemma}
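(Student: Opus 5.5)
The plan has two parts: first verify Assumption~\ref{ass:1} for $M|_E$, then prove boundedness of $D^{\pm1}$ on $\Omega_0$ by comparing the Wiener--Masani factor $G$ with scalar outer functions on the disc.

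\emph{Step 1: Assumption~\ref{ass:1}.} By Assumption~\ref{ass:model2}(c),(d), $M=QQ^*$ is real analytic on a neighbourhood of the compact set $E$ and positive definite on $\R\supset E$. By continuity and compactness there are constants $0<m\le C<\infty$ with
\[
 m I_r\le M(x)\le C I_r,\qquad x\in E.
\]
Hence the entries of $M$ are bounded, and $\log\det M$ lies between $r\log m$ and $r\log C$. Since $\omega_\infty$ is a probability measure on $\partial\Omega=E$, both integrability conditions in \eqref{eq:hyp-full} hold. Theorem~\ref{thm:thm:WM} then gives $D$, with $D(\infty)>0$ and unitary matrices $U_j$ satisfying \eqref{eq:gap-main}.

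\emph{Step 2: boundedness of $D$.} Recall that $D=G\circ s$, where $G$ is the normalised Wiener--Masani factor of $\cM=M\circ p_*$ on $\D$. It therefore suffices to bound $G$ and $G^{-1}$ on $\D$. On $\T$ we have $G G^*=\cM\le C I_r$ almost everywhere, so every entry of the boundary values of $G$ is bounded by $\sqrt C$. Since $G\in H^2(\D)^{r\times r}\subset$ Smirnov class, the bounded boundary values force $G\in H^\infty$, with $\|G(w)\|\le\sqrt{rC}$ by the Poisson representation of $H^2$ functions.

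\emph{Step 3: boundedness of $D^{-1}$ (main obstacle).} The inverse is the delicate part, because $G^{-1}$ is only known to be meromorphic-free. Since $\det G$ is outer, $G^{-1}=\operatorname{adj}(G)/\det G$ lies in the Smirnov class $N^+$: the entries of $\operatorname{adj}G$ are in $H^\infty$ and $1/\det G$ is outer. On $\T$ we have $(G^{-1})^*G^{-1}=\cM^{-1}\le m^{-1}I_r$, so the boundary values of $G^{-1}$ are bounded by $m^{-1/2}$. Smirnov's theorem ($N^+\cap L^\infty(\T)\subset H^\infty$) then gives $G^{-1}\in H^\infty$. Composing with the holomorphic section $s$ of \eqref{eq:section-s} transfers both bounds to $D$ and $D^{-1}$ on $\Omega_0$.

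As a cross-check, one can instead argue with $\log|\det G|=\tfrac12 P[\log\det\cM]\ge \tfrac r2\log m$, which bounds $1/\det G$ and hence $G^{-1}$ through the adjugate.
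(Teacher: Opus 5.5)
Your proof is correct and follows the paper's argument. The steps match: the uniform bounds $mI_r\le M\le CI_r$ on $E$ give Assumption~\ref{ass:1}, bounded boundary values put the $H^2$ factor $G$ in $H^\infty$, outerness of $\det G$ controls $G^{-1}$, and composing with $s$ transfers the bounds to $D^{\pm1}$. The only difference is that your main route for $G^{-1}$ invokes Smirnov's theorem $N^+\cap L^\infty(\T)\subset H^\infty$, whereas the paper uses exactly your ``cross-check'': the Poisson lower bound $|\det G|\ge m^{r/2}$ combined with the adjugate formula.
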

 
\begin{proof}
$M$ is continuous and Hermitian positive definite on the compact set $E$, so
there are $0<c\le C$ with $cI_r\le M(x)\le CI_r$ for $x\in E$.  In particular
the entries of $M$ are bounded, hence in $L^1(\partial\Omega,\omega_\infty)$,
and $\log\det M$ is bounded, hence in $L^1(\partial\Omega,\omega_\infty)$.  This
is Assumption \ref{ass:1}.
 
For the last statement, write $\mathcal M=M\circ p^*$ as in \eqref{eq:lifted-weight}
and let $G_0$ be the Wiener--Masani factor of Theorem \ref{thm:thm:WM}.  From
$\mathcal M=G_0G_0^*$ almost everywhere on $\T$ we get $\|G_0^*\|^2\le C$ there,
so the entries of $G_0$ lie in $H^2(\D)$ and have essentially bounded boundary
values, hence lie in $H^\infty(\D)$.  Also $\det G_0$ is outer with
$|\det G_0|^2=\det\mathcal M\ge c^{\,r}$ almost everywhere, so $\log|\det G_0|$
is the Poisson integral of a function bounded below by $\frac r2\log c$ and
therefore $|\det G_0|\ge c^{\,r/2}$ on $\D$.  Consequently
$G_0^{-1}=(\det G_0)^{-1}\operatorname{adj}G_0$ is bounded on $\D$ as well.
Since $D=G\circ s$ with $s$ as in \eqref{eq:section-s}, the bounds pass to $D$
and $D^{-1}$.
\end{proof}
 
\subsubsection{Transformation with \texorpdfstring{$g$}{g} function}
 
Let $\mu_V$ be the equilibrium measure of Assumption \ref{ass:model2}(b), put
\begin{equation}\label{eq:varyingg}
  g(z)=\int\log(z-s)\,d\mu_V(s),\qquad z\in\C\setminus(-\infty,b_g],
\end{equation}
with principal branches, and let $\ell$ be the Euler--Lagrange constant.  We use
the following, for which we refer to \cite[Ch.~I]{ST97} and
\cite[\S3]{DKMVZ99b}.
 
\begin{lemma}\label{lem:varyingg}
\begin{enumerate}
\item[(i)] $g(z)=\log z+O(z^{-1})$ as $z\to\infty$.
\item[(ii)] $g_++g_--V-\ell=0$ on $E$, and $g_++g_--V-\ell<0$ on $\R\setminus E$.
\item[(iii)] On $\Sigma_j$ the difference $g_+-g_-$ is the constant $i\omega_j$,
  where
  \begin{equation}\label{eq:varyingomega}
    \omega_j:=2\pi\,\mu_V\bigl([a_j,b_g]\bigr)
    =2\pi\sum_{\ell=j}^{g}\mu_V\bigl([a_\ell,b_\ell]\bigr)\in(0,2\pi),
    \qquad j=1,\dots,g;
  \end{equation}
  moreover $g_+-g_-=2\pi i$ on $(-\infty,a_0)$ and $g_+=g_-$ on $(b_g,\infty)$.
\end{enumerate}
\end{lemma}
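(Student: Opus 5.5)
The plan is to obtain all three items from the logarithmic potential $U^{\mu_V}(z)=\int\log|z-s|^{-1}\,d\mu_V(s)$ and the boundary values of the principal logarithm, in the same way Lemma~\ref{lem:jacobig} is handled for $\mu_E$. Throughout, $\mu_V$ is a probability measure supported on $E\subset[a_0,b_g]$ with a bounded density $\psi$ (square-root vanishing at $\mathcal E$), so $g$ is analytic on $\C\setminus(-\infty,b_g]$.

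\emph{Item (i).} For $|z|$ large we write $\log(z-s)=\log z+\log(1-s/z)$. Since $\mu_V$ is compactly supported and $\int d\mu_V=1$, integrating gives $g(z)=\log z-\frac1z\int s\,d\mu_V(s)+O(z^{-2})$. This is (i).

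\emph{Item (iii).} For $x\in\R$ we have $\log_\pm(x-s)=\log|x-s|\pm i\pi\,\mathbf 1_{s>x}$. Integrating against $\mu_V$ (the density is bounded, so dominated convergence justifies the boundary values) gives
\[
g_\pm(x)=-U^{\mu_V}(x)\pm i\pi\,\mu_V\bigl((x,\infty)\bigr),
\qquad
g_+-g_-=2\pi i\,\mu_V\bigl((x,\infty)\bigr).
\]
\begin{itemize}
\item For $x\in\Sigma_j$, $\mu_V$ has no mass in the gap, so $\mu_V((x,\infty))=\mu_V([a_j,b_g])$. This is constant on $\Sigma_j$ and equals $i\omega_j$ in \eqref{eq:varyingomega}.
\item Each cut carries positive mass because $\psi>0$ on $E^\circ$. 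Hence $\mu_V([a_j,b_g])\in(0,1)$ for $1\le j\le g$, which gives $\omega_j\in(0,2\pi)$.
\item For $x<a_0$ the mass to the right is $1$, giving $2\pi i$. For $x>b_g$ it is $0$.
\end{itemize}

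\emph{Item (ii).} From the same formula, $g_++g_-=-2U^{\mu_V}(x)$ for every real $x$. Hence
\[
g_++g_--V-\ell=-\bigl(2U^{\mu_V}+V+\ell\bigr)
\]
up to the sign convention for $\ell$; with the convention of \cite{DKMVZ99b}, $\ell$ is defined precisely so that this is the Euler--Lagrange combination. The Frostman/Euler--Lagrange conditions for $\mu_V$ \cite[Ch.~I]{ST97} say this combination vanishes on $\operatorname{supp}\mu_V=E$ and is $\le0$ elsewhere. The strict inequality on $\R\setminus E$ is exactly the regularity part of Assumption~\ref{ass:model2}(b).

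The main point requiring care is consistency of conventions. The sign of $\ell$ must be matched with the Euler--Lagrange relation $2U^{\mu_V}+V=\ell'$ on $E$, taking $\ell=-\ell'$ so that (ii) reads correctly. Equality must also hold at every point of $E$, not just quasi-everywhere. That follows because the bounded density makes $U^{\mu_V}$ continuous, and $V$ is continuous. Everything else is routine.
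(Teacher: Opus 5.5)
Your proof is correct and follows essentially the same route as the paper. The paper cites \cite[Ch.~I]{ST97} and \cite[\S3]{DKMVZ99b} for (i)--(ii), and derives (iii) from $g_+-g_-=2\pi i\,\mu_V\bigl((x,\infty)\bigr)$ together with the fact that $\mu_V$ puts no mass on the gaps. You fill in the standard details the paper leaves to those references: the boundary values of the principal logarithm, matching the sign of $\ell$ to the Euler--Lagrange constant, and invoking regularity for the strict inequality on $\R\setminus E$.
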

 
As in Section \ref{subsec:jacobi}, (iii) follows from
$g_+-g_-=2\pi i\,\mu_V\bigl((x,\infty)\bigr)$ together with the fact that
$\mu_V$ puts no mass on a gap.  Note that $g_++g_-=2\int\log|x-s|\,d\mu_V(s)$ is
real on $\R$, a fact we use below.  We set
\begin{equation}\label{eq:varyingphi}
  \phi(z)=V(z)+\ell-2g(z),
\end{equation}
which by Assumption \ref{ass:model2}(a) is analytic in a neighborhood of $\R$
with $(-\infty,b_g]$ removed, and in particular on the intersection of that
neighborhood with $\C\setminus\R$.  By Lemma \ref{lem:varyingg}(ii),
\begin{equation}\label{eq:varyingphiprops}
  \phi_++\phi_-=0 \ \text{ on } E,
  \qquad
  \phi_++\phi_->0 \ \text{ on } \R\setminus E,
\end{equation}
so that on a cut $\phi_\pm$ is purely imaginary with $\phi_+=-(g_+-g_-)$, while
on $\R\setminus E$ the real part $\phi_++\phi_-$ is strictly positive.
 
The first transformation is again
\begin{equation}\label{eq:varyingT}
  T(z)=e^{-\frac{n\ell}{2}\sigma_3}\,Y(z)\,
       e^{-n\left(g(z)-\frac{\ell}{2}\right)\sigma_3},
\end{equation}
now with $Y$ the solution of RH Problem \ref{rhp:model2}.  
 
\begin{lemma}\label{lem:varyingTjump}
$T$ is analytic in $\C\setminus\R$, $T(z)=I_{2r}+O(z^{-1})$ as $z\to\infty$, and
\begin{equation}\label{eq:varyingTjumps}
  T_+=T_-\times
  \begin{cases}
    \begin{pmatrix}e^{n\phi_+}I_r&M\\0_r&e^{-n\phi_+}I_r\end{pmatrix},
      &x\in(a_\ell,b_\ell),\quad\ell=0,\dots,g,\\[14pt]
    e^{-in\omega_j\sigma_3}
    \begin{pmatrix}I_r&e^{in\omega_j}e^{-\frac n2(\phi_++\phi_-)}M\\
                   0_r&I_r\end{pmatrix},
      &x\in\Sigma_j,\quad j=1,\dots,g,\\[14pt]
    \begin{pmatrix}I_r&e^{-\frac n2(\phi_++\phi_-)}M\\0_r&I_r\end{pmatrix},
      &x\in\R\setminus[a_0,b_g].
  \end{cases}
\end{equation}
\end{lemma}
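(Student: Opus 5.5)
The plan is to mirror the proof of Lemma \ref{lem:jacobiTjump}, substituting $Y_\pm=e^{\frac{n\ell}{2}\sigma_3}T_\pm e^{n(g_\pm-\frac{\ell}{2})\sigma_3}$ into the jump \eqref{eq:model2jump} with $W_n=e^{-nV}M$. This gives, for every $x\in\R$,
\[
T_+=T_-\begin{pmatrix}e^{-n(g_+-g_-)}I_r&e^{n(g_++g_--\ell-V)}M\\0_r&e^{n(g_+-g_-)}I_r\end{pmatrix}.
\]
The three cases then differ only in how the exponents are rewritten, using Lemma \ref{lem:varyingg} and the definition \eqref{eq:varyingphi} of $\phi$. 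Analyticity of $T$ in $\C\setminus\R$ is immediate, since $Y$ is analytic there and $g$ is analytic off $(-\infty,b_g]$. The normalisation at infinity is the same computation as before: $e^{-n(g-\ell/2)\sigma_3}z^{n\sigma_3}=e^{n\ell\sigma_3/2}(I_{2r}+O(z^{-1}))$ by Lemma \ref{lem:varyingg}(i), combined with \eqref{eq:model2infty}.

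\emph{On a cut.} By Lemma \ref{lem:varyingg}(ii) the upper-right exponent vanishes. Since $V$ is analytic across $\R$, we have $V_+=V_-$, so $\phi_+=-(g_+-g_-)$ and $\phi_-=-\phi_+$. The diagonal entries are therefore $e^{n\phi_+}$ and $e^{-n\phi_+}$, which is the first case.

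\emph{Off a cut.} On $\R\setminus E$ I write $g_++g_--V-\ell=-\tfrac12(\phi_++\phi_-)$, which follows from adding the two boundary values of \eqref{eq:varyingphi}. On $(b_g,\infty)$ we have $g_+=g_-$, so the diagonal is trivial and the third case follows. On $(-\infty,a_0)$ we have $g_+-g_-=2\pi i$, so $e^{\mp 2\pi i n}=1$ and again the third case follows. On $\Sigma_j$, Lemma \ref{lem:varyingg}(iii) gives $g_+-g_-=i\omega_j$. I then factor
\[
\begin{pmatrix}e^{-in\omega_j}I_r&X\\0_r&e^{in\omega_j}I_r\end{pmatrix}=e^{-in\omega_j\sigma_3}\begin{pmatrix}I_r&e^{in\omega_j}X\\0_r&I_r\end{pmatrix},
\]
with $X=e^{-\frac n2(\phi_++\phi_-)}M$; this is exactly the stated middle case.

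No step is a real obstacle. The only point needing care is the branch bookkeeping: $\phi$ involves $g$ with principal branches cut along $(-\infty,b_g]$, so $\phi_\pm$ must be read as boundary values of $V+\ell-2g$ with $V$ single-valued. It must also be checked that $\phi_++\phi_-=2(V+\ell)-2(g_++g_-)$ is real, which holds because $g_++g_-=2\int\log|x-s|\,d\mu_V(s)$. This realness is what makes the factor $e^{-\frac n2(\phi_++\phi_-)}$ exponentially small by \eqref{eq:varyingphiprops}, although that smallness is not needed for the identity itself.
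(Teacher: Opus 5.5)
Your proposal is correct and follows the paper's proof essentially line for line. You substitute $Y_\pm$ in terms of $T_\pm$ into \eqref{eq:model2jump} to get the upper-triangular jump with diagonal $e^{\mp n(g_+-g_-)}$ and corner $e^{n(g_++g_--V-\ell)}M$, then rewrite the exponents via $\phi=V+\ell-2g$ and Lemma~\ref{lem:varyingg} separately on the cuts, the gaps (factoring $e^{-in\omega_j\sigma_3}$ out on the left), and the two unbounded intervals, with the normalisation at infinity from Lemma~\ref{lem:varyingg}(i) exactly as in the paper.
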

 
\begin{proof}
Substituting $Y_-=e^{\frac{n\ell}{2}\sigma_3}T_-
e^{n(g_--\frac{\ell}{2})\sigma_3}$ into \eqref{eq:model2jump} gives, for
$x\in\R$,
\[
  T_+=T_-\begin{pmatrix}
    e^{-n(g_+-g_-)}I_r&e^{n(g_++g_--V-\ell)}M\\
    0_r&e^{n(g_+-g_-)}I_r\end{pmatrix},
\]
where we used $W_n=e^{-nV}M$.  Now $g_++g_--V-\ell=-\frac12(\phi_++\phi_-)$ and
$-(g_+-g_-)=\frac12(\phi_+-\phi_-)$ by \eqref{eq:varyingphi}.  On a cut the
first of these vanishes and $\phi_+=-\phi_-$, which gives the first line.  On
$\Sigma_j$ the diagonal entries are $e^{\mp in\omega_j}$ by Lemma
\ref{lem:varyingg}(iii), and factoring them out on the left gives the second
line.  On $(-\infty,a_0)$ the diagonal entries are $e^{\mp2\pi in}=1$ and on
$(b_g,\infty)$ they equal $1$ as well, which gives the third line.  The
behavior at infinity follows from \eqref{eq:model2infty} and Lemma
\ref{lem:varyingg}(i).
\end{proof}
 
By \eqref{eq:varyingphiprops} the off-diagonal entries in the second and third
lines of \eqref{eq:varyingTjumps} are exponentially small in $n$, uniformly on
compact subsets of $\R\setminus E$; that they remain so as $x\to\pm\infty$
follows from Assumption \ref{ass:model2}(a) and (d), since $\phi(x)$ grows like
$V(x)-2\log|x|\to\infty$ while $\|M(x)\|$ grows only polynomially.  This is the
point at which the growth hypothesis on $V$ is used; see \cite[\S4]{DKMVZ99b}.
 
\subsubsection{Opening of lenses}
 
On a cut we factor, exactly as in \eqref{eq:jacobifactorisation} and again using
$e^{n\phi_+}e^{n\phi_-}=1$,
\begin{equation}\label{eq:varyingfactorisation}
  \begin{pmatrix}e^{n\phi_+}I_r&M\\0_r&e^{n\phi_-}I_r\end{pmatrix}
  =\begin{pmatrix}I_r&0_r\\M^{-1}e^{n\phi_-}&I_r\end{pmatrix}
   \begin{pmatrix}0_r&M\\-M^{-1}&0_r\end{pmatrix}
   \begin{pmatrix}I_r&0_r\\M^{-1}e^{n\phi_+}&I_r\end{pmatrix},
\end{equation}
and we set
\begin{equation}\label{eq:varyingS}
  S(z)=T(z)\times
  \begin{cases}
    \begin{pmatrix}I_r&0_r\\-M(z)^{-1}e^{n\phi(z)}&I_r\end{pmatrix},
      &\text{in the upper parts of the lenses},\\[10pt]
    \begin{pmatrix}I_r&0_r\\ \ \ M(z)^{-1}e^{n\phi(z)}&I_r\end{pmatrix},
      &\text{in the lower parts of the lenses},\\[10pt]
    I_{2r},&\text{elsewhere}.
  \end{cases}
\end{equation}
By Assumption \ref{ass:model2}(c) and (d), $M$ is analytic and invertible on a
neighborhood $\mathcal U$ of $E$, so no branch cuts are involved and the lenses
may be taken inside $\mathcal U$.
 
The relevant inequality is now local.  From $\phi_+(x)=-2\pi
i\,\mu_V\bigl((x,\infty)\bigr)$ we get $\phi_+'(x)=2\pi i\psi(x)$, and therefore
\[
  \operatorname{Re}\phi(x\pm i\varepsilon)=-2\pi\varepsilon\,\psi(x)
    +O(\varepsilon^2),
  \qquad \varepsilon\downarrow0,
\]
so that $\operatorname{Re}\phi<0$ on both sides of $E^\circ$, since $\psi>0$
there.  By the regularity assumed in \ref{ass:model2}(b) the lenses can be
chosen so that $\operatorname{Re}\phi\le-c<0$ on their boundary, away from the discs
around $\mathcal E$; see \cite[\S4]{DKMVZ99b}.  In contrast with Section
\ref{subsec:jacobi}, where $\operatorname{Re}\phi<0$ held on all of $\Omega$,
here the lenses have to be kept close to $E$.
 
Thus $S$ has the constant jump
$\begin{psmallmatrix}0_r&M\\-M^{-1}&0_r\end{psmallmatrix}$ on $E^\circ$, the
jump $e^{-in\omega_j\sigma_3}\bigl(I_{2r}+O(e^{-cn})\bigr)$ on $\Sigma_j$, and
jumps $I_{2r}+O(e^{-cn})$ on the boundary of the lenses and on $\R\setminus[a_0,b_g]$.
 
\subsubsection{The global problem and its reduction}
 
Discarding the jumps that tend to the identity we arrive at the following.
 
\begin{rhp}\label{rhp:varyingouter}
Let $n\ge1$.  We seek $\mathcal N=\mathcal N(\cdot\,;n)$ such that
\begin{enumerate}
\item $\mathcal N:\C\setminus[a_0,b_g]\to\C^{2r\times2r}$ is analytic.
\item $\displaystyle
  \mathcal N_+=\mathcal N_-\begin{pmatrix}0_r&M\\-M^{-1}&0_r\end{pmatrix}$
  on $(a_\ell,b_\ell)$, $\ell=0,\dots,g$.
\item $\mathcal N_+=\mathcal N_-\,e^{-in\omega_j\sigma_3}$ on $\Sigma_j$,
  $j=1,\dots,g$.
\item $\mathcal N(z)=I_{2r}+O(z^{-1})$ as $z\to\infty$.
\item $\mathcal N(z)=O\bigl(|z-e|^{-1/4}\bigr)$ as $z\to e$, $e\in\mathcal E$.
\end{enumerate}
\end{rhp}
 
Let $D$ be as in Lemma \ref{lem:varyingszego} and let $D^\sharp$, $\mathsf D$ and
$\mathsf D_\infty$ be defined by \eqref{eq:jacobisharp} and
\eqref{eq:jacobiconjugators}.  Item 5 is stated without the factor
$\mathsf D^{-1}$ that occurs in RH Problem \ref{rhp:jacobiouter}, because
$D^{\pm1}$ are bounded by Lemma \ref{lem:varyingszego}; this is the only formal
difference between the two global problems.
 
\begin{proposition}\label{prop:varyingreduction}
Let $U_1,\dots,U_g\in\Ur$ be as in \eqref{eq:gap-main} for $M$, let
$\omega_1,\dots,\omega_g$ be as in \eqref{eq:varyingomega}, and put
\begin{equation}\label{eq:varyingK}
  K_j=e^{-in\omega_j}U_j\in\Ur,\qquad j=1,\dots,g.
\end{equation}
If $N$ solves RH Problem \ref{def:rhp} with these data, then
\begin{equation}\label{eq:varyingglobal}
  \mathcal N(z)=\mathsf D_\infty\,N(z)\,\mathsf D(z)
\end{equation}
solves RH Problem \ref{rhp:varyingouter}, and conversely.  In particular RH
Problem \ref{rhp:varyingouter} has exactly one solution, by Propositions
\ref{prop:Nsolves} and \ref{prop:unique}.
\end{proposition}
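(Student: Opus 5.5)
The argument runs in parallel with the proof of Proposition~\ref{prop:jacobireduction}. The only difference is that the boundary behaviour at the endpoints now uses the boundedness of $D^{\pm1}$ from Lemma~\ref{lem:varyingszego}. Let $N$ solve RH Problem~\ref{def:rhp} with $K_j=e^{-in\omega_j}U_j$, and define $\mathcal N:=\mathsf D_\infty N\mathsf D$.

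\textbf{Forward direction.} Analyticity on $\C\setminus[a_0,b_g]$ holds because $N$, $D^{-1}$ and $D^\sharp$ are all holomorphic there. For item 4, use $D(z)=D(\infty)+O(z^{-1})$ and $D^\sharp(z)=D(\infty)^*+O(z^{-1})$; these give $\mathsf D_\infty\mathsf D(z)=I_{2r}+O(z^{-1})$, and combining with (N3) yields $\mathcal N=I_{2r}+O(z^{-1})$. For the jumps, write $\mathcal N_-^{-1}\mathcal N_+=\mathsf D_-^{-1}(N_-^{-1}N_+)\mathsf D_+$ and use the boundary values \eqref{eq:jacobisharpbv}. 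On a cut this gives $\begin{psmallmatrix}0&D_-D_-^*\\-(D_+D_+^*)^{-1}&0\end{psmallmatrix}$, which equals $\begin{psmallmatrix}0&M\\-M^{-1}&0\end{psmallmatrix}$ by \eqref{eq:band-main}. For this to hold pointwise and not merely almost everywhere, I invoke Lemma~\ref{lem:Dboundary}; its hypotheses hold because $M$ is real analytic and positive definite on $E$ and $D^{\pm1}$ are bounded. On $\Sigma_j$, substitute $D_+=D_-U_j$. The diagonal blocks become $D_-K_jU_j^{-1}D_-^{-1}=e^{-in\omega_j}I_r$ and $D_-^{-*}U_jK_j^{-1}D_-^*=e^{in\omega_j}I_r$, which is item 3. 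Item 5 follows from (N4), since $\mathsf D_\infty$ is constant and $\mathsf D$ is bounded near each $e$.

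\textbf{Converse.} Given a solution $\mathcal N$ of RH Problem~\ref{rhp:varyingouter}, set $N:=\mathsf D_\infty^{-1}\mathcal N\mathsf D^{-1}$. The matrix $\mathsf D^{-1}=\diag(D,(D^\sharp)^{-1})$ is holomorphic and bounded on $\Omega_0$ by Lemma~\ref{lem:varyingszego}. The jump computations above are algebraic identities and can be reversed. Conjugating the cut jump $\begin{psmallmatrix}0&M\\-M^{-1}&0\end{psmallmatrix}$ by $\mathsf D_\pm$ returns $J$, again by \eqref{eq:band-main}. Conjugating the gap jump $e^{-in\omega_j\sigma_3}$ returns $\diag(K_j,K_j^{-1})$. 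The normalisation at infinity reverses in the same way. The endpoint bound $O(|z-e|^{-1/4})$ for $N$ follows from the bound for $\mathcal N$ together with boundedness of $\mathsf D^{-1}$.

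\textbf{Existence and uniqueness.} Existence of a solution of RH Problem~\ref{rhp:varyingouter} comes from Proposition~\ref{prop:Nsolves}, and uniqueness comes from Proposition~\ref{prop:unique}, both transferred through the bijection just established.

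The main obstacle I expect is justifying the boundary values of $D$ up to the endpoints, so that the jump identities hold everywhere on the open cuts and item 5 is legitimate. I intend to handle this entirely through Lemma~\ref{lem:Dboundary} and the $H^\infty$ bounds proved in Lemma~\ref{lem:varyingszego}.
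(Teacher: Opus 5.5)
Your proposal is correct and follows the paper's proof. The paper simply says that the argument of Proposition~\ref{prop:jacobireduction} carries over word for word with $W$ replaced by $M$, using only \eqref{eq:band-main} and \eqref{eq:gap-main}. Your added steps only make explicit what the paper leaves implicit: the boundedness of $D^{\pm1}$ (for item 5 and its converse, as the paper remarks just before the proposition) and Lemma~\ref{lem:Dboundary} (to get the cut jump pointwise rather than almost everywhere).
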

 
\begin{proof}
The proof of Proposition \ref{prop:jacobireduction} applies word for word, with
$W$ replaced by $M$ throughout; the only property of the weight that is used is
$M=D_\pm D_\pm^*$ on the cuts together with $D_+=D_-U_j$ on the gaps, that is,
\eqref{eq:band-main} and \eqref{eq:gap-main}.
\end{proof}
 
Again only the local analysis remains.  Fix \(e\in\mathcal E\), and
let $Q$ be as in \cite[1.2]{DR25}.  Put
\[
 c_{e,\pm}=\phi_\pm(e),\qquad
 \phi_e(z)=\phi(z)-c_{e,\pm},\qquad \pm\im z>0.
\]
Following \cite[(5.24)]{DR25} and \ref{ass:model2}, write
\[
 P_e(z)=E_{e,n}(z)P_e^{(1)}(z)
 \begin{pmatrix}
 e^{\frac n2\phi(z)}Q(z)^{-1}&0\\
 0&e^{-\frac n2\phi(z)}Q^\sharp(z)
 \end{pmatrix},
 \qquad Q^\sharp(z)=Q(\bar z)^* .
\]
At an internal endpoint adjacent to \(\Sigma_j\), one has
\[
 \phi_+(e)-\phi_-(e)=-2i\omega_j.
\]
Consequently, conjugation by the last factor changes the jump of \(S\)
on \(\Sigma_j\cap\Delta_e\) into
\[
 \begin{pmatrix}I_r&I_r\\0&I_r\end{pmatrix}.
\]
Thus the diagonal jump \(e^{-in\omega_j\sigma_3}\) is already carried
by the exponential in the ansatz.  On the cut and the lens boundary the
same substitution gives the other two constant jumps in
\cite[(5.25)]{DR25}.

Since,
\[
 f_e(z)=\left(\frac34\phi_e(z)\right)^{2/3}
\]
is conformal near \(e\), the problem for \(P_e^{(1)}\) is
precisely the Airy problem \cite[(5.29)]{DR25}, and is solved in
\cite[\S5.5]{DR25} at a right endpoint and by the reflected
construction of \cite[\S5.6]{DR25} at a left endpoint.  The prefactor
\(E_{e,n}\) is determined by matching with \(\mathcal N\), as in
\cite[(5.37) and (5.50)]{DR25}; its apparent singularity at \(e\) is
removable by the same argument as in Lemma \ref{lem:Phi}.  Therefore
\[
 P_e(z)\mathcal N(z)^{-1}=I_{2r}+O(n^{-1}),
 \qquad z\in\partial\Delta_e.
\]
The final transformation is a small-norm RH problem, and reversing
\eqref{eq:varyingT} and \eqref{eq:varyingS} gives the 
asymptotic formula.

\begin{equation}\label{MVOP2}  e^{-ng(z)}P_n(z)
  =D(\infty)\,\cc(z)\,F_1\bigl(z;K_1,\dots,K_g\bigr)\,D(z)^{-1}+O(n^{-1}),
\end{equation}
uniformly on compact
subsets of $\C\setminus [a_0,b_g]$, which for $g=0$ is \cite[Theorem 1(i)]{DR25}.
\paragraph{Concluding remarks}:
We would like to highlight that in Assumption \ref{ass:model2} the matrix part of the weight in \eqref{eq:model2weight} is independent of $n$. One can consider $W_n(x)=W(x)^n$ as well. This comes with significant challenges, as the potential theory with $g$ function is now carried out on a Riemann surface determined by the spectral curve of the eigenvalues (see for e.g. \cite{K25}). The Deift-Zhou steepest analysis can also be posed in a Riemann-Surface. This is an on-going work with M. van Horrsen, A.B.J. Kuijlaars, and M. Piorkowski. We hope to come back to this soon.
\subsection*{Acknowledgements}
I thank Max van Horrsen for helpful discussions.
Sampad Lahiry acknowledges financial support from the International Research Training Group (IRTG) between KU Leuven and University of Melbourne and the  Melbourne Research Scholarship of University of Melbourne.

\end{document}